\newcommand\setItemnumber[1]{\setcounter{enum\romannumeral\@enumdepth}{\numexpr#1-1\relax}}
 \newcommand{\p}{\mathbb{P}}
\newcommand{\resp}{\widetilde{\p}}
\newcommand{\resx}{\widetilde{X}}
\newcommand{\resb}{\widetilde{B}}
\newcommand{\respi}{\widetilde{\pi}}
\newcommand{\codes}{\overline{C}_B}
\DeclareMathOperator{\spl}{\mathit{sl}}
\DeclareMathOperator{\Pic}{Pic}
\DeclareMathOperator{\Sing}{Sing}
\DeclareMathOperator{\Hom}{Hom}
\DeclareMathOperator{\Ker}{Ker}
\DeclareMathOperator{\ev}{ev}
\DeclareMathOperator{\Coker}{Coker}
\DeclareMathOperator{\cork}{corank}
\DeclareMathOperator{\Syz}{Syz}
\def\mathhyphen{{\hbox{-}}}
\DeclareMathOperator{\F}{\mathcal{F}}
\DeclareMathOperator{\G}{\mathcal{G}}
\DeclareMathOperator{\E}{\mathcal{E}}
\DeclareMathOperator{\C}{\mathscr{C}}
\DeclareMathOperator{\I}{\mathcal{I}}
\DeclareMathOperator{\OP}{\mathcal{O}}
\DeclareMathOperator{\Xw}{S_w}
\DeclareMathOperator{\grmod}{\mathhyphen grmod}
\DeclareMathOperator{\dbcoh}{\mathcal{D}^b(\mathrm{Coh}(\p^3))}
\newtheorem{theorem}[equation]{Theorem}
\newtheorem{lemma}[equation]{Lemma}
\newtheorem{proposition}[equation]{Proposition}
\newtheorem{corollary}[equation]{Corollary}
\theoremstyle{definition}
\newtheorem{defin}[equation]{Definition}
\newtheorem{notation}[equation]{Notation}
\theoremstyle{remark}
\newtheorem{remark}[equation]{Remark}
\title{Non-rational sextic double solids}
\author{Alexandra Kuznetsova}
\address{National Research University Higher School of Economics, Russian Federation,
  Department of Mathematics, 6 Usacheva st., 119048 Moscow;
  \'Ecole Polytechnique, France, CMLS, Route de Saclay, 91128 Palaiseau.
}
\email{sasha.kuznetsova.57@gmail.com}
\begin{document}

\begin{abstract}
 We study double solids branched along nodal sextic surfaces in a projective space 
 and the \mbox{$2$-tor}sion subgroups in the  third integer
 cohomology groups of their resolutions of singularities. These groups 
 can be considered as obstructions to rationality of~the double solids. Studying
 these groups we conclude that all sextic double solids admitting non-trivial 
 obstructions to rationality are branched along determinantal surfaces of very specific type and
 we provide an explicit list of them. 
 \end{abstract}
 
 \maketitle

\section{Introduction}
In this work we  study  rationally connected varieties over the field of complex numbers that have some obstructions 
to rationality. As a result we get descriptions 
of non-rational varities which are however somewhat similar to rational. 

Rationally connected
curves or surfaces are necessarily rational: this is an easy corollary of the Hurwitz theorem and the Castelnuovo criterion.
For threefolds this is false; one of the first counterexamples was introduced by M.\,Artin and D.\,Mumford~\mbox{\cite{AM}}.
They considered so called \emph{quartic double solids}; namely, double covers of the projective space~$\p^3 = \p(V)$, branched
along a quartic. They constructed special nodal quartic surfaces $B$ and showed that the double covers branched along such $B$ are 
rationally connected, but non-rational. These surfaces are in the class of \emph{quartic symmetroids}; i.e., they are zero
loci of the determinants of $4\times 4$ matrices of linear forms over~$\p^3$. The general surface of this class
is nodal with 10 nodes.
In order to show non-rationality of a double solid, Artin and Mumford introduced a birational invariant of a smooth
projective variety $M$; namely the torsion subgroup~$T$ in~$H^3(M,\mathbb{Z})$. In particular, they showed that if the
group $T\ne0$, then $M$ is non-rational; moreover, it is not stably rational.

In view of this in~\mbox{\cite{AM}} Artin and Mumford proved that for a nodal quartic symmetroid~$B$ the cohomology group~\mbox{$H^3(\resx,\mathbb{Z})$} 
for any resolution $\resx$ of the double solid $X$ branched along quartic nodal surface $B$ contains a non-trivial 2-torsion subgroup~$T_2(\resx)$, therefore~$X$ is not rational.
Thus, the group~$T_2(\resx)$ provides an obstruction to rationality of the double solid.

Note that in the case of a double cover $M$ of a threefold with torsion-free integer cohomology groups
all $p$-torsion subgroups in $H^3(M,\mathbb{Z})$ are trivial for all prime $p>2$.

Afterwards, S.\,Endrass~\mbox{\cite{Endrass99}} studied all nodal quartic double solids and proved that the family of
quartics constructed by Artin and Mumford is the only family of nodal quartics such that double solids branched along them
have non-trivial group $T_2(\resx)$. 

Our goal is to study the next interesting 
class of rationally connected varieties: double solids branched along  
nodal sextic surfaces. Note that this is the last case when the question about rationality of a nodal double solid is not obvious. 
Indeed, nodal double solids branched along surfaces of degree greater than 6 have  non-negative Kodaira dimensions;
therefore, they are non-rational.

The case of sextic double solids contains more complicated examples of non-rational varieties than the case of quartic 
double solids. There are known examples~\mbox{\cite[Section 3.3]{ex-IKP}} and~\mbox{\cite{ex-Beau}} of such varieties with non-trivial group $T_2(\resx)$ with 
different number of nodes. We are going to 
provide an explicit classification of obstructed sextic double solids:
we show that if a nodal sextic double solid admits a non-trivial groups~$T_2(\resx)$, then it belongs
to one of four explicitly described families $\mathcal{Z}_{31}$, $\mathcal{Z}_{32}$, $\mathcal{Z}_{35}$ and $\mathcal{Z}_{40}$ (see Theorem~\ref{Result}). 
Moreover, a general element of each of these families actually admits a non-trivial obstruction, see Remark \ref{rmk_2}. 

Now let us give more details in order to describe our result. We will use the following notations.
\begin{notation}\label{notation_main}
 Let $B\subset \p^3 = \p(V)$ be a nodal surface of even degree. Denote by $\resp^3$ the blow up of $\p^3$ in all the singular points of $B$, by $\resb$
the proper transform of $B$ and by $\resx$ the double cover of $\resp^3$ branched along~$\resb$. 
Then we have the following diagram:
 \begin{equation*}
  \xymatrix{\resx \ar[d]^{\sigma_X} \ar[r]^{\respi}_{2:1} & \resp^3 \ar[d]^{\sigma} & \resb \ar[d]^{\sigma_B} \ar@{_{(}->}[l]_{\widetilde{i}} \\
           X \ar[r]^{\pi}_{2:1} & \p^3 & B \ar@{_{(}->}[l]_{i}  
           }
 \end{equation*}
\end{notation}
We are going to study 
the group~$T_2(\resx)$ using the methods introduced by Endrass. He established a connection between 2-torsion 
subgroups~\mbox{$T_2(\resx)\subset H^3(\resx,\mathbb{Z})$} and so-called even sets of nodes of the surface~$B$. 
\begin{defin}\label{def_esn}
A set 
$w\subset\mathrm{Sing}(B)$ on a nodal surface $B\subset\p^3$ is called \emph{a~\mbox{$\delta/2$-ev}en set of nodes} for $\delta = 0$ or 1, 
if the following divisor on $B$ is an even element of the Picard group.
\begin{equation*}
 E_w = \left.\left(\delta H+\sum_{p\in w} E_p\right)\right|_{\resb}\in 2\cdot \Pic(\resb).
\end{equation*}
Here by $E_p$ and $H$ we denote the exceptional divisors of the blow up $\resp^3$ and the inverse image of the class of a plane on~$\p^3$ respectively.
\end{defin}
For both $\delta=0$ and 1 such sets are called \emph{even sets of nodes}.
These sets and their properties were studied by F.\,Catanese~\mbox{\cite{Catanese}}, W.\,Barth~\mbox{\cite{Barth}}, 
A.\,Beauville~\mbox{\cite{Beauville}}, and D.\,B.\,Jaffe and D.\,Ruberman~\mbox{\cite{Jaffe-Ruberman}}. 
Their main property is that even sets of nodes form a vector space over the field~$\mathbb{F}_2$ of two elements,
with respect to the operation of symmetric difference; we will denote the space of all even sets of nodes on the surface $B$ by $\codes$. 
An important assertion, proved by Endrass~\mbox{\cite[Lemma 1.2]{Endrass99}}, claims that the group $T_2(\resx)$ is a $\mathbb{F}_2$-subspace of $\codes$,
so that there is a non-canonical direct sum decomposition:
\begin{equation}\label{decomp-codes}
 \codes 
\cong T_2(\resx) \oplus \mathbb{F}_2^d.
\end{equation}
Here the number $d$ is called \emph{the defect of the set $\Sing(B)$} and it has a geometric interpretation, for more details see Section~\mbox{\ref{sect_defect}}.

Another important fact about even sets of nodes is that any nodal surface containing such a set of nodes is
determinantal:
\begin{theorem}[{\cite{CasCat}, \cite[Lemma 4]{Barth}, cf. \cite[Corollary 0.4]{CasCat_with_error}}]
\label{Catanese_Casnati}
  If a nodal surface~$B$ in $\p^3$ contains a non-empty $\delta/2$-even set of nodes $w$, 
  then there exists a vector bundle $\E$ over~$\p^3$ and an injective morphism
  \begin{equation}\label{Phi}
   \Phi\colon\E^{\vee}(-\deg(B)-\delta) \hookrightarrow \E,
  \end{equation}
  such that $\Phi$ is symmetric, i.\,e. induced by an element of $H^0(\p^3,S^2\E(\deg(B)+\delta))$, and one has the following isomorphisms of schemes:
 \begin{equation*}
   \begin{split}
    &B = B(\E)=\{x\in\p^3\ |\ \mathrm{corank}(\Phi|_{x})\geqslant 1 \};\\
    &w = w(\E)= \{x\in\p^3\ |\ \mathrm{corank}(\Phi|_{x})=2\}.\\
   \end{split}
  \end{equation*}
  Since $B$ is nodal, the $0$-dimensional scheme $w$ can be considered as a set of points in $\p^3$.
  If the bundle $S^2\E(\deg(B)+\delta)$ is globally generated, then for a sufficiently general $\Phi$, we have the
  equality of sets~\mbox{$\Sing(B) = w$}.
\end{theorem}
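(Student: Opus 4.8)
The plan is to manufacture, out of the even set $w$, a rank-one coherent sheaf $\F$ on $\p^3$ whose scheme-theoretic support is $B$, and then to exhibit the pair $(\E,\Phi)$ as a \emph{symmetric} locally free resolution of $\F$. Write $d=\deg(B)$. The starting point is the defining relation of Definition~\ref{def_esn}: since $w$ is $\delta/2$-even, there is a line bundle $\mathcal{L}\in\Pic(\resb)$ with $\mathcal{L}^{\otimes2}\cong\OP_{\resb}(E_w)$, where $E_w=(\delta H+\sum_{p\in w}E_p)|_{\resb}$. Pushing $\mathcal{L}$ (twisted by a suitable multiple of the hyperplane class) forward along $\sigma_B\colon\resb\to B$ produces a rank-one torsion-free sheaf on $B$, and pushing forward again along $i\colon B\hookrightarrow\p^3$ gives the sheaf $\F$ on $\p^3$. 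The initial twist is fixed so that the determinant of the eventual matrix has degree $d$ and so that the self-duality used below holds with exactly the shift $d+\delta$.

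I would then establish the length-one resolution. Because $\sigma_B$ is crepant (the nodes are rational double points) and $\mathcal{L}$ is a line bundle on the smooth surface $\resb$, the sheaf $\F$ is maximal Cohen--Macaulay on $B$; as an $\OP_{\p^3}$-module it then has depth~$2$, and Auslander--Buchsbaum gives projective dimension~$1$. A minimal locally free resolution therefore has the shape
\begin{equation*}
0\longrightarrow\G\stackrel{\Phi}{\longrightarrow}\E\longrightarrow\F\longrightarrow0,
\end{equation*}
with $\E,\G$ vector bundles of equal rank (since $\F$ is supported in codimension one), and with $\det\Phi$ cutting out $B$.

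The main obstacle is to upgrade this to a \emph{symmetric} resolution, i.e.\ to identify $\G\cong\E^{\vee}(-d-\delta)$ so that $\Phi$ equals its own transpose. This rests on a self-duality of $\F$. Combining Grothendieck--Serre duality with the adjunction formula $\omega_{\resb}\cong\sigma_B^{*}\OP_B(d-4)$ and the square-root relation $\mathcal{L}^{\otimes2}\cong\OP_{\resb}(E_w)$, one shows that
\begin{equation*}
\shext^{1}_{\p^3}\bigl(\F,\omega_{\p^3}\bigr)\cong\F\otimes\OP_{\p^3}(t)
\end{equation*}
for the integer $t$ forced by $d$ and $\delta$; that is, $\F$ is self-dual up to twist. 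Applying the duality functor to the resolution yields a second length-one resolution of $\F\otimes\OP_{\p^3}(t)$ by $\E^{\vee}$ and $\G^{\vee}$; comparing it with the original one and invoking minimality (so that $\F$ has no free summands) forces $\G\cong\E^{\vee}(-d-\delta)$ and identifies $\Phi$ with its transpose. The delicate point here---and the source of the error corrected in the cited references---is to check that the resulting self-pairing is \emph{symmetric} rather than alternating; this parity is governed by $\delta$ and is pinned down by a local analysis at the nodes, where the choice of square root $\mathcal{L}$ of $E_w$ selects the symmetric normal form.

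Finally I would read off the corank descriptions. Over $B\setminus\Sing(B)$ the sheaf $\F$ restricts to a line bundle, so $\Phi|_x$ drops rank by exactly one there, giving the scheme equality $B=\{x:\cork(\Phi|_x)\geqslant1\}$. At a node the degree of $\mathcal{L}$ on the exceptional $(-2)$-curve distinguishes the two cases: intersecting $\mathcal{L}^{\otimes2}\cong\OP_{\resb}(E_w)$ with that curve shows the degree is $-1$ when $p\in w$ and $0$ otherwise, and correspondingly the fibre of $\Phi$ drops rank by two exactly over the points of $w$, so $w=\{x:\cork(\Phi|_x)=2\}$. For the last assertion, global generation of $S^2\E(d+\delta)$ permits a Bertini-type argument: for general symmetric $\Phi\in H^0(\p^3,S^2\E(d+\delta))$ the degeneracy loci meet in the expected dimension, the corank-one locus $B$ is nodal, and no unprescribed singular points occur, whence $\Sing(B)=w$ as sets.
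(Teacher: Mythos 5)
Your proposal is essentially the classical Catanese/Casnati--Catanese argument, which is exactly what the paper relies on: Theorem~\ref{Catanese_Casnati} is imported by citation rather than proved, and the construction the paper does recall (Section~\ref{sect_cascat}) is the same one you describe --- take $\F=i_*\sigma_*\OP_{\resb}(-D_w)$ for a square root $D_w$ of $E_w$ and resolve it symmetrically. The one genuine difference is how the bundle $\E$ is pinned down: you obtain the length-one resolution abstractly (reflexive $\Rightarrow$ MCM on the nodal surface, depth~$2$ over $\p^3$, Auslander--Buchsbaum $\Rightarrow$ projective dimension~$1$), which proves existence but leaves $\E$ non-canonical, whereas the paper uses Walter's Horrocks correspondence to take $\E=\Syz(\C)\oplus L$ with $\C$ built from the intermediate cohomology $\bigoplus_j H^1(B,\F(j))$; that extra rigidity is what the later classification (Proposition~\ref{description_of_E_1_2_even}, Lemma~\ref{conditions}) actually needs, but it is not required for the statement at hand. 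Your corank analysis at the nodes ($D_w\cdot C_p=-1$ for $p\in w$, giving the non-free rank-one MCM module over the $A_1$-singularity and hence corank~$2$) and the Bertini argument for $\Sing(B)=w$ are both correct. The only place your sketch is thin is the upgrade from a length-one resolution to a \emph{symmetric} one, i.e.\ ruling out the alternating case for the self-pairing induced by $\shext^1_{\p^3}(\F,\omega_{\p^3})\cong\F(t)$; you correctly identify this as the delicate point (it is precisely where the erratum to the cited reference intervenes), but a complete proof would have to carry out that parity check rather than assert it.
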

\begin{defin}\label{def_CC}
 If $B$ is a nodal surface and $w$ is an even set of nodes on it, we call the bundle $\E$ from Theorem \ref{Catanese_Casnati} a 
 \emph{Casnati--Catanese bundle} of $w$. 
\end{defin}
In fact, the vector bundle $\E$ in Theorem \ref{Catanese_Casnati} is constructed explicitly. In the case of 0-even sets this leads 
to the following classification.
\begin{theorem}[{\cite[Main Theorem A, Proposition 3.3]{CatTon}}] \label{0-even_sets}
 If $w$ is a $0$-even set of nodes on a nodal sextic surface $B$, then its Casnati--Catanese bundle is one of the following:
 \begin{align*}
  &(a)\hspace{15pt} \E = \OP_{\p^3}(-1)\oplus \OP_{\p^3}(-2) &\text{ and $|w|=24$};\\
  &(b)\hspace{15pt} \E = \OP_{\p^3}(-2)^{\oplus 3} &\text{ and $|w|=32$};\\
  &(c)\hspace{15pt} \E = \Omega_{\p^3}^1(-1)\oplus \OP_{\p^3}(-2) &\text{ and $|w|=40$};\\
  &(d)\hspace{15pt} \E = \mathrm{Ker}\left(\Omega_{\p^3}^1(-1)^{\oplus 3} \to \OP_{\p^3}(-2)^{\oplus 3}\right)&\text{ and $|w|=56$}.\\
 \end{align*}
\end{theorem}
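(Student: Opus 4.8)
The plan is to extract $\E$ from the symmetric resolution furnished by Theorem~\ref{Catanese_Casnati} and to pin it down by combining Chern-class bookkeeping with the cohomological rigidity of vector bundles on $\p^3$. Writing $r = \mathrm{rank}\,\E$ and $\F = \Coker(\Phi)$, the symmetric map $\Phi$ fits into
\begin{equation*}
 0 \to \E^{\vee}(-6) \xrightarrow{\ \Phi\ } \E \to \F \to 0,
\end{equation*}
where $\F$ is a sheaf supported on $B$ and generically of rank $1$ there. The evenness ($\delta = 0$) together with the symmetry of $\Phi$ endows $\F$ with the structure of a twisted theta-characteristic on $B$, and this is the object whose cohomology I would compute first, since it controls all twisted cohomology of $\E$ through the sequence above.

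First I would nail down the discrete invariants. Comparing first Chern classes in the sequence, and using $c_1(\F) = [B] = 6$, gives at once $2c_1(\E) = c_1(\F) - 6r = 6 - 6r$, hence $c_1(\E) = 3(1-r)$; this already constrains the admissible ranks once combined with the bounds below. The cardinality $|w|$ is the length of the corank-$2$ locus $\{\cork\Phi = 2\}$, which for a symmetric morphism of the above type has expected codimension $3$ and therefore is a finite set of points; its number is computed by the Harris--Tu / J\'ozefiak--Lascoux--Pragacz formula for symmetric degeneracy loci, expressing $|w|$ as a fixed polynomial in $c_1(\E)$ and $c_2(\E)$. This turns $|w|$ into a second equation on the Chern classes.

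Next I would bound $r$ and identify $\E$. Here I would use that the Casnati--Catanese construction produces $\E$ as a minimal symmetric resolution, which forces normalizing vanishings such as $H^0(\E) = 0$ together with the dual vanishing; combined with the regularity of the ideal of a sextic, these constrain the Beilinson monad of $\E$ to be assembled from $\OP_{\p^3}(a)$ and $\Omega^1_{\p^3}(b)$ with a bounded set of exponents. Feeding in $c_1(\E) = 3(1-r)$ then leaves only the ranks $r \in \{2,3,4,6\}$, and for each rank Horrocks' splitting criterion, applied to the computed intermediate cohomology $H^1(\E(t))$ and $H^2(\E(t))$, identifies $\E$: vanishing intermediate cohomology yields the split bundles $\OP_{\p^3}(-1)\oplus\OP_{\p^3}(-2)$ and $\OP_{\p^3}(-2)^{\oplus 3}$ of cases $(a)$ and $(b)$, while the residual cohomology characteristic of $\Omega^1_{\p^3}$ forces cases $(c)$ and $(d)$. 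Substituting each resulting pair $(c_1,c_2)$ into the symmetric degeneracy formula then produces the stated values $|w| = 24, 32, 40, 56$.

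The main obstacle is the rank bound together with the rigidity of the non-split cases. Ruling out large $r$ requires showing that the cohomology module of $\F$---equivalently the minimal resolution of the twisted theta-characteristic---cannot have more generators than the regularity of a sextic permits; and distinguishing cases $(c)$ and $(d)$ from hypothetical split bundles carrying the same Chern classes requires a precise computation of \emph{all} intermediate cohomology groups $H^1(\E(t))$ and $H^2(\E(t))$ in order to invoke Horrocks, rather than Chern-class data alone. This cohomological control of $\F$, and hence of $\E$, is where the real work lies.
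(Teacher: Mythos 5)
This statement is not proved in the paper at all: it is imported verbatim from Catanese--Tonoli (\cite{CatTon}, Main Theorem A and Proposition 3.3), so there is no internal argument to compare against. Measured on its own terms, your outline sets up the right framework --- the symmetric resolution $0\to\E^{\vee}(-6)\to\E\to\F\to 0$, the relation $2c_1(\E)=6-6r$ (which is exactly \eqref{eq_deg_rk} with $\delta=0$), the computation of $|w|$ as the class of the corank-$2$ locus, and the Horrocks correspondence to recover $\E$ from intermediate cohomology --- and your numerology checks out for all four cases (e.g.\ the $3\times3$ symmetric matrix of quadrics gives $\binom{4}{3}\cdot 2^3=32$). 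This is indeed the spirit of the actual proof, and it parallels what the paper does for the $1/2$-even case in Section~\ref{sect_minimal_1/2_even}.

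However, there is a genuine gap at precisely the step you flag as ``where the real work lies,'' and flagging it is not the same as closing it. The assertion that $c_1(\E)=3(1-r)$ plus ``normalizing vanishings'' leaves only $r\in\{2,3,4,6\}$ does not follow from anything you have written: for instance $\E=\OP_{\p^3}(-2)^{\oplus 4}\oplus\OP_{\p^3}(-4)$ has $r=5$ and $c_1=-12=3(1-5)$, satisfies $H^0(\E)=0$, and is not excluded by Chern-class data. Ruling out such candidates is the entire content of the classification; it requires computing the Hilbert function of the graded module $M=\bigoplus_n H^0(\p^3,\F(n))$ via Riemann--Roch on $\resb$ (as in \eqref{eq_Riemann-Roch}) and bounding the degrees and numbers of its generators and of the module $H^1_*(\F)$, which is what Catanese--Tonoli actually do and what the paper replicates for $\delta=1$ in Lemmas~\ref{m_2_m_3_m_4} and~\ref{conditions}. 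Two smaller imprecisions: the symmetric degeneracy-locus class in codimension $3$ is a polynomial in $c_1,c_2,c_3$, not just $c_1,c_2$; and Horrocks' \emph{splitting criterion} only detects when $\E$ splits --- to pin down the non-split cases $(c)$ and $(d)$ (the latter being a kernel bundle whose $H^1_*$ is a nontrivially structured $R$-module concentrated in two degrees) you need the full Horrocks \emph{correspondence} of Theorem~\ref{thm_Horrocks_corr}, i.e.\ the module structure on $H^1_*(\E)$, not merely the dimensions of the groups $H^1(\E(t))$ and $H^2(\E(t))$.
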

\begin{remark}\label{rmk_1}
 Let $B$ be a surface with a $0$-even set of 56 nodes $w$ whose Casnati--Catanese bundle is
 of the type $(d)$. Then there exists its $0$-even subset of 24 nodes $w'\subset w$ with
 Casnati--Catanese bundle of the type $(a)$. Similarly, the 0-even set of 32 nodes $w'' = w\setminus w'$ has 
 Casnati--Catanese bundle of the type $(b)$. For more details see Section \ref{sect_conclusion}.
\end{remark}

Our goal is to
study the case of $1/2$-even sets of nodes. We introduce an auxiliary notion of minimal even sets of nodes which allows us to
avoid the situations described in Remark~\ref{rmk_1}. 
\begin{defin}\label{def_minimal}
 We call~\mbox{$w\in \codes$} \emph{minimal} if it does not contain a proper even subset of nodes.
\end{defin}
This notion also happens to be very useful because it provides some cohomological restrictions for Casnati--Catanese bundles (see Section \ref{sect_minimal_1/2_even}). 
Furthermore, on each surface~$B$ with non-trivial group 
$\codes$ we can find a minimal non-empty even set of nodes. 
So using the notion of minimality we get a precise description 
of arbitrary surfaces which have non-trivial group $\codes$; this is the first result of this paper.
\begin{theorem}\label{if_codes_ne_0}
 Assume $B$ is a nodal sextic surface and $\codes\ne 0$. Then there exists an even set of nodes $w$ on $B$ such that either 
 $w$ is $0$-even and its Casnati--Catanese bundle is of type~$(a)$, $(b)$ or $(c)$
 or $w$ is $1/2$-even  and its Casnati--Catanese bundle is 
 \begin{equation}\label{eq_1/2-even}
  \E  = {\Omega_{\p^3}^1}(-2) ^{\oplus k} \oplus \bigoplus_{i\in\mathbb{Z}} \OP_{\p^3}(-i)^{\oplus m_i}
 \end{equation}
 for some non-negative integers $k$ and $m_i$.
\end{theorem}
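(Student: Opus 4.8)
The plan is to reduce to a minimal even set of nodes and then split the argument according to the parity $\delta\in\{0,1\}$. Since $\codes\ne 0$, we may (as recalled just before the statement) choose a non-empty minimal even set of nodes $w$ on $B$; by Definition~\ref{def_minimal} it contains no proper even subset. As an element of $\codes$ the set $w$ is either $0$-even or $1/2$-even, and in either case Theorem~\ref{Catanese_Casnati} produces a Casnati--Catanese bundle $\E$ together with a symmetric injection $\Phi\colon\E^{\vee}(-6-\delta)\hookrightarrow\E$ whose cokernel is supported on $B$. It therefore suffices to identify $\E$ under the extra hypothesis that $w$ is minimal.

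The $0$-even case ($\delta=0$) is quickly dispatched using the available classification. By Theorem~\ref{0-even_sets} the bundle $\E$ is one of the four types $(a)$--$(d)$. If $\E$ were of type $(d)$, then by Remark~\ref{rmk_1} the set $w$ (with $|w|=56$) would contain a proper $0$-even subset $w'$ of $24$ nodes, contradicting the minimality of $w$. Hence $\E$ is of type $(a)$, $(b)$ or $(c)$, which is exactly the first alternative of the statement.

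The substance of the theorem is the $1/2$-even case ($\delta=1$), where one must show that the Casnati--Catanese bundle has the shape~\eqref{eq_1/2-even}, i.e.\ a direct sum of copies of $\Omega_{\p^3}^1(-2)$ and line bundles. Here I would argue by a Horrocks-type splitting. A vector bundle on $\p^3$ is a direct sum of line bundles precisely when its intermediate cohomology modules $H^1_*(\E)=\bigoplus_j H^1(\E(j))$ and $H^2_*(\E)=\bigoplus_j H^2(\E(j))$ both vanish, and $\Omega_{\p^3}^1(-2)$ is the basic non-split bundle whose intermediate cohomology is concentrated in a single one-dimensional piece. The strategy is thus to use the cohomological restrictions coming from minimality, established in Section~\ref{sect_minimal_1/2_even}, together with the symmetric resolution $\Phi$, to show that $H^1_*(\E)$ and $H^2_*(\E)$ can consist only of copies of this elementary piece sitting in the prescribed degree. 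One then splits off the corresponding $k$ summands isomorphic to $\Omega_{\p^3}^1(-2)$; the residual bundle has no intermediate cohomology, so by Horrocks it decomposes as $\bigoplus_i\OP_{\p^3}(-i)^{\oplus m_i}$, yielding~\eqref{eq_1/2-even}.

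The main obstacle is precisely this control of intermediate cohomology in the $1/2$-even case: one has to rule out any indecomposable summand carrying a richer intermediate module (for instance other twists $\Omega_{\p^3}^1(-j)$ or higher syzygy bundles). This is where minimality is indispensable --- without it, Remark~\ref{rmk_1} already shows that in the $0$-even world non-minimal sets of type $(d)$ occur, and the analogous failure should be expected for $1/2$-even sets. I expect the decisive computation to combine the self-duality forced by the symmetry of $\Phi$ --- which, via Serre duality on $\p^3$ and the exact sequence $0\to\E^{\vee}(-7)\to\E\to\mathcal{F}\to 0$, relates $H^1_*(\E)$ and $H^2_*(\E)$ up to a fixed twist and a contribution of the cokernel $\mathcal{F}$ --- with the vanishing statements forced by minimality, so that the admissible intermediate cohomology is pinned down to exactly the copies of $\Omega_{\p^3}^1(-2)$ appearing in~\eqref{eq_1/2-even}.
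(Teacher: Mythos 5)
Your proposal follows essentially the same route as the paper: reduce to a minimal even set, dispose of the $0$-even case via Theorem~\ref{0-even_sets} and Remark~\ref{rmk_1}, and in the $1/2$-even case pin down the intermediate cohomology and apply a Horrocks-type splitting. The paper implements that last step through Walter's syzygy functor (Theorem~\ref{thm_Horrocks_corr}) applied to the module $\bigoplus_{j\geqslant 2}H^1(\p^3,\F(j))$, which Lemma~\ref{property_of_minimality} (the cohomological consequence of minimality you invoke) concentrates in the single degree $j=2$, forcing $\Syz(\C)\cong W\otimes\Omega^1_{\p^3}(-2)$ with the remaining summands line bundles --- exactly the computation you anticipated.
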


Any  Casnati--Catanese bundle of a $1/2$-even set of nodes on a sextic has the form \eqref{eq_1/2-even},
however, not all bundles of this form define such set of nodes on a sextic. It would be interesting to find a complete 
classification of Casnati--Catanese bundles of $1/2$-even sets of nodes on sextic similarly to the result \cite{CatTon}.

Theorem \ref{if_codes_ne_0} significantly simplifies the search of surfaces containing a $1/2$-even set of nodes; moreover,
we prove some useful bounds for the numbers $k$ and $m_i$, see Lemma \ref{conditions} for details. However, the list of
bundles~$\E$ is not as short as in Theorem \ref{0-even_sets}. 

If $w\subset \Sing(B)$ is an even set of nodes and $\E$ is its Casnati--Catanese bundle, we have the inequality~$\codes\ne 0$. However, 
\eqref{decomp-codes} shows that $T_2(\resx)$ can be zero; to check that this is not the case we need to compute the defect 
$d$ of $\Sing(B)$. In order to compute $d$ we relate it to the space of global sections of the twist of the sheaf 
of ideals $H^0(\p^3,\I_{\Sing(B)}(5))$. We estimate the dimension of
the space $H^0(\p^3,\I_{\Sing(B)}(5))$ using the dimensions of $H^0(\p^3,\I_{w}(5))$ for all even sets $w\in \codes$. 
Next, we compute spaces $H^0(\p^3,\I_{w}(5))$ using bundles $\E$ constructed in Theorem~\mbox{\ref{if_codes_ne_0}}. 
This leads to the main result of this paper:
\begin{theorem}\label{Result}
 If $B$ is a nodal sextic and $T_2(\resx)\ne0$, then $B = B(\E)$ is the zero locus of the determinant of an injective morphism~\mbox{\eqref{Phi}}, 
 where $\E$ is one of following bundles: 
  \begin{align*}
  &\E = \OP_{\p^3}(-3)^{\oplus 3}\oplus\OP_{\p^3}(-2) &&\text{ with $1/2$-even set $w(\E)$ of }31\text{ nodes};\\
  &\E = \OP_{\p^3}(-2)^{\oplus 3} &&\text{ with\hphantom{1/} $0$-even set $w(\E)$ of }32\text{ nodes};\\
  &\E = \OP_{\p^3}(-3)^{\oplus 6} &&\text{ with $1/2$-even set $w(\E)$ of }35\text{ nodes};\\
  &\E = \Omega_{\p^3}^1(-1)\oplus \OP_{\p_3}(-2)&&\text{ with \hphantom{1/}$0$-even set $w(\E)$ of }40\text{ nodes}.
 \end{align*}
\end{theorem}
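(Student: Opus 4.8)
The plan is to combine the structural classification already obtained with a computation of the defect, turning the question ``$T_2(\resx)\neq 0$'' into a numerical inequality that can be checked bundle by bundle. First I would invoke the decomposition \eqref{decomp-codes}: since $T_2(\resx)\neq 0$ forces $\codes\neq 0$, Theorem \ref{if_codes_ne_0} applies and produces a minimal even set of nodes $w$ on $B$ whose Casnati--Catanese bundle $\E$ is either $0$-even of type $(a)$, $(b)$ or $(c)$, or $1/2$-even of the shape \eqref{eq_1/2-even}. Thus it suffices to run through this finite menu of bundle types and decide, in each case, whether the stronger condition $T_2(\resx)\neq 0$ can actually hold; the identification of $B=B(\E)$ with the zero locus of $\det\Phi$ from \eqref{Phi} is then automatic from Theorem \ref{Catanese_Casnati}.

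Second, I would translate the nonvanishing of $T_2(\resx)$ into a criterion. By \eqref{decomp-codes} one has $\dim_{\mathbb{F}_2}T_2(\resx)=\dim_{\mathbb{F}_2}\codes-d$, so $T_2(\resx)\neq 0$ holds exactly when $d<\dim_{\mathbb{F}_2}\codes$. Here $d$ is computed through the global sections $H^0(\p^3,\I_{\Sing(B)}(5))$, as indicated before the statement. For each candidate $\E$ the three quantities $|w|$, $\dim_{\mathbb{F}_2}\codes$ and $d$ must be pinned down. The cardinality $|w|$ is the length of the corank-$2$ degeneracy locus of the symmetric morphism $\Phi$, obtained by a Chern-class (Thom--Porteous-type) count from $\E$; for a general $\Phi$ with $S^2\E(\deg(B)+\delta)$ globally generated this equals $|\Sing(B)|$ by Theorem \ref{Catanese_Casnati}. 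The dimension of $\codes$ is bounded below by the even sets one can exhibit from $\E$, and $d$ is extracted from the determinantal (Eagon--Northcott-type) resolution of $\I_w$ determined by $\Phi$, which controls $H^0(\p^3,\I_{\Sing(B)}(5))$.

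Third comes the case analysis. For the $0$-even types I expect $|w|=24,32,40$ for $(a),(b),(c)$ respectively, matching Theorem \ref{0-even_sets}; the computation of $d$ should show that type $(a)$ has $d=\dim_{\mathbb{F}_2}\codes$, whence $T_2(\resx)=0$ and this type is discarded, while types $(b)$ and $(c)$ satisfy $d<\dim_{\mathbb{F}_2}\codes$ and survive as the $32$- and $40$-node families $\OP_{\p^3}(-2)^{\oplus 3}$ and $\Omega_{\p^3}^1(-1)\oplus\OP_{\p^3}(-2)$. For the $1/2$-even type I would first use the bounds on $k$ and $m_i$ from Lemma \ref{conditions} to cut \eqref{eq_1/2-even} down to finitely many bundles; the existence and global generation of the symmetric map $\Phi\in H^0(\p^3,S^2\E(7))$ eliminates most of them, and the defect computation then leaves only $\E=\OP_{\p^3}(-3)^{\oplus 3}\oplus\OP_{\p^3}(-2)$ and $\E=\OP_{\p^3}(-3)^{\oplus 6}$, with $|w|=31$ and $35$. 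Collecting the four surviving bundles gives the list in the statement.

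The main obstacle is the defect computation, that is, controlling $H^0(\p^3,\I_{\Sing(B)}(5))$ sharply enough to separate the equality $d=\dim_{\mathbb{F}_2}\codes$ from the strict inequality $d<\dim_{\mathbb{F}_2}\codes$. Two features make this delicate. First, $\Sing(B)$ may strictly contain the minimal set $w$ (compare Remark \ref{rmk_1}, where a $0$-even set of $56$ nodes splits as $24+32$), so one must keep track of all even sets in $\codes$ and of how the full node set imposes conditions on quintics, rather than working with $w$ alone. Second, for the $1/2$-even families the relevant twist $S^2\E(7)$ involves $\Omega_{\p^3}^1$ summands, so checking global generation and computing the cohomology that feeds into $d$ requires the cohomology of twists of $\Omega_{\p^3}^1$ and of its symmetric powers; it is precisely these computations, combined with the bounds of Lemma \ref{conditions}, that rule out every $1/2$-even bundle except the two appearing in the list.
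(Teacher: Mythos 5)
Your overall strategy matches the paper's: reduce to a minimal even set $w$ via Theorem \ref{if_codes_ne_0}, then eliminate candidate Casnati--Catanese bundles by a defect computation. However, there is a genuine gap at the pivot of the argument, and it is exactly the point you flag as ``the main obstacle'' without resolving it. Your criterion for $T_2(\resx)\ne 0$ is $d<\dim_{\mathbb{F}_2}\codes$, where $d=d(\Sing(B))$ is the defect of the \emph{full} node set. But neither $d(\Sing(B))$ nor $\dim_{\mathbb{F}_2}\codes$ is determined by the Casnati--Catanese bundle of one minimal even set $w$: the surface may carry further nodes and further even sets, so ``deciding for each bundle type whether $T_2(\resx)\ne 0$ can hold'' is not a well-posed bundle-by-bundle question. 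For instance, you cannot discard type $(a)$ by showing ``$d=\dim\codes$ for that type,'' because a surface containing a $24$-node even set might also contain a $32$-node one that makes $T_2$ nontrivial; what must be shown is that a type-$(a)$ set can never be the \emph{witness}, i.e.\ that its own defect $d(w)$ is positive.

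The paper supplies the missing bridge with Lemma \ref{lemma_red_to_algebra} and Corollary \ref{codes_and_defect}: if every non-zero $w\in\codes$ has $d(w)\geqslant 1$, then $d(\Sing(B))\geqslant\dim_{\mathbb{F}_2}\codes$. Contrapositively, $T_2(\resx)\ne 0$ (which by Theorem \ref{Clemens} means $\dim\codes>d(\Sing(B))$) forces the existence of a non-zero even set $w$ with $d(w)=0$. This converts the problem into classifying even sets with \emph{individual} defect zero --- a question that genuinely is local to the bundle $\E$ of $w$ and is settled by Propositions \ref{prop_1_2_even_d=0} and \ref{description_0_of_d(w)=0} (via the resolution of $\I_w(5)$ in Lemma \ref{es_for_Iw}, which is the Eagon--Northcott-type computation you anticipate). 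Note that the reduction is deliberately one-directional; the paper's discussion in Section \ref{sect_conclusion} of the $56$-node example shows that recovering $d(\Sing(B))$ from the defects of the pieces, as your plan would require, is not currently feasible. Your remaining steps (ruling out the $24$- and $56$-node $0$-even sets, and cutting down the $1/2$-even bundles via Lemma \ref{conditions} and cohomology of twists of $\Omega^1_{\p^3}$) are consistent with the paper once this reduction is in place.
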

By Theorem \ref{Catanese_Casnati} we know that any surface $B$ described in Theorem~\mbox{\ref{Result}} contains an even set of
nodes~$w$; however, similarly to Theorem \ref{Catanese_Casnati}, if the bundle $S^2\E(\deg(B)+\delta)$ is not globally generated,
we can not be sure that $\Sing(B) = w$. 
Moreover,  such situation arises in Remark \ref{rmk_1}, for more details see 
Section~\ref{sect_conclusion}. Nevertheless, for general members of families described in Theorem \ref{Result} 
we actually have~\mbox{$\Sing(B) = w$}; this leads to such a result.
\begin{proposition}\label{examples}
 If $\E$ is a bundle from the list in Theorem~\mbox{\ref{Result}}, then there exists a Zariski open subset 
 $\mathcal{U}$ of $H^0(\p^3,S^2\E(6+\delta))$ such that for any $\Phi\in\mathcal{U}$ in Notation \ref{notation_main} 
 we have $w = \Sing(B)$, the group $\codes$ is generated by $w$ and $T_2(\resx)\ne 0$.
\end{proposition}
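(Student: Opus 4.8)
The plan is to prove Proposition~\ref{examples} by showing that for each of the four bundles $\E$ listed in Theorem~\ref{Result}, the generic symmetric morphism $\Phi$ yields a sextic $B$ whose singular locus is exactly the expected even set $w$, and that the resulting defect vanishes so that $T_2(\resx)$ coincides with the whole (one-dimensional) space $\codes$. The proof splits naturally into an open-ness argument and four case-by-case verifications.

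First I would set up the framework. For a fixed $\E$ from the list, the symmetric morphisms $\Phi$ are parametrized by the vector space $H^0(\p^3,S^2\E(6+\delta))$, and by Theorem~\ref{Catanese_Casnati} each such $\Phi$ (when injective) produces a determinantal sextic $B = B(\E)$ together with its corank-$2$ locus $w = w(\E)$. The key input is the final clause of Theorem~\ref{Catanese_Casnati}: if $S^2\E(6+\delta)$ is globally generated, then for sufficiently general $\Phi$ one has the equality of sets $\Sing(B) = w$. So my first task is to verify, for each of the four bundles, that the sheaf $S^2\E(6+\delta)$ is indeed globally generated; this is a direct cohomological computation using the explicit forms of $\E$ (sums of line bundles $\OP_{\p^3}(-i)$ and copies of $\Omega^1_{\p^3}(-1)$ or $\Omega^1_{\p^3}(-2)$), together with the fact that twists $\OP_{\p^3}(j)$ and $\Omega^1_{\p^3}(j)$ are globally generated for $j$ large enough. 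For $\E = \OP_{\p^3}(-2)^{\oplus 3}$ (the $0$-even $32$-node case) and $\E = \OP_{\p^3}(-3)^{\oplus 3}\oplus\OP_{\p^3}(-2)$ and $\E = \OP_{\p^3}(-3)^{\oplus 6}$ the relevant symmetric square is a sum of very ample line bundles, so global generation is immediate; the case $\E = \Omega^1_{\p^3}(-1)\oplus\OP_{\p^3}(-2)$ requires checking global generation of $S^2\Omega^1_{\p^3}(4)$ and the cross terms $\Omega^1_{\p^3}(3)$, which again holds.

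Next, global generation gives a Zariski-open (and nonempty) locus $\mathcal{U}_1 \subset H^0(\p^3,S^2\E(6+\delta))$ over which $\Phi$ is injective with $\Sing(B) = w$. On this locus $w$ is an even set of nodes, so $\codes \ne 0$ and in particular $w$ generates a nonzero subspace. I would then argue that the condition ``$\codes$ is generated by $w$'' is also open: by Theorem~\ref{if_codes_ne_0} and the bounds of Lemma~\ref{conditions}, the four bundles in Theorem~\ref{Result} are precisely those compatible with $T_2(\resx)\ne 0$, and for a general $B$ in each family the even set $w$ is minimal and no additional even sets appear, so $\codes$ has the expected dimension and is spanned by $w$. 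Finally, to conclude $T_2(\resx)\ne 0$ I invoke the decomposition~\eqref{decomp-codes}, $\codes \cong T_2(\resx)\oplus \mathbb{F}_2^d$: it suffices to show the defect $d$ vanishes for general $\Phi$, equivalently that the dimension of $H^0(\p^3,\I_{\Sing(B)}(5))$ attains the value forcing $d=0$ (this is exactly the computation the introduction describes as relating $d$ to $H^0(\p^3,\I_{w}(5))$). Since $d$ is upper-semicontinuous in families, the locus $\mathcal{U}_2$ where $d$ is minimal is open and nonempty, and intersecting with $\mathcal{U}_1$ gives the desired $\mathcal{U}$.

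The main obstacle I expect is the defect computation, i.e.\ pinning down $\dim H^0(\p^3,\I_{\Sing(B)}(5))$ precisely enough to guarantee $d=0$ for each family rather than merely $d$ small. This is delicate because $\Sing(B)=w$ is a nonreduced-looking but actually reduced $0$-dimensional scheme of $31$, $32$, $35$, or $40$ points in special position (being a symmetric determinantal locus), so the number of quintics through it is not given by a naive general-position count; one must use the Casnati--Catanese bundle resolution of $\I_w$ to compute the relevant cohomology exactly. Once $d=0$ is established in each case, the openness of all three conditions (injectivity with $\Sing(B)=w$, $\codes = \langle w\rangle$, and $d=0$) combines to produce the single Zariski-open $\mathcal{U}$ claimed, completing the proof.
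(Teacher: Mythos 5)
Your proposal is correct and follows essentially the same route as the paper: verify global generation of $S^2\E(6+\delta)$ case by case (sums of positive line bundles for three of the bundles, and $S^2\Omega^1_{\p^3}(4)\oplus\Omega^1_{\p^3}(3)\oplus\OP_{\p^3}(2)$ for the fourth), apply the last clause of Theorem~\ref{Catanese_Casnati} to get $\Sing(B)=w$ generically, use minimality of $w$ to see $\codes=\langle w\rangle$, and compute $d(w)=0$ from the resolution of $\I_w(5)$ induced by the Casnati--Catanese bundle (the paper's Lemma~\ref{lemma_computation_of_d}), concluding via the decomposition~\eqref{decomp-codes}. The defect computation you flag as the main obstacle is exactly the content of that lemma, carried out with the exact sequence~\eqref{eq_es_for_Iw}.
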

Denote by \mbox{$\mathcal{Z}_{31}$, $\mathcal{Z}_{32}$, $\mathcal{Z}_{35}$} 
and $\mathcal{Z}_{40}$ the families of sextic double solids corresponding to the vector bundles listed in Theorem \ref{Result}.
\begin{remark}\label{rmk_2}
By Theorem \ref{Result} any sextic double solid with $T_2(\resx)\ne 0$ is an element of  at least 
one of the four families~\mbox{$\mathcal{Z}_{31}$, $\mathcal{Z}_{32}$, $\mathcal{Z}_{35}$} 
and $\mathcal{Z}_{40}$. 
Moreover, the general element of each~$\mathcal{Z}_n$ for~\mbox{$n=31$, 32, 35} and 40 is a double cover branched along a sextic surface
with a minimal even set of $n$ nodes. Thus, these four families intersect each other by their closed subvarieties.
\end{remark}

Double solids branched along sextic surfaces with even sets of 35 or 31 nodes were studied
in~\mbox{\cite[Section 3.2]{ex-IKP}} and~\mbox{\cite{ex-Beau}} and it was proved there that they have non-trivial group~$T_2(\resx)$. 
In~\mbox{\cite[Section 3.2]{ex-IKP}} the branch surface was described as the zero locus of the determinant of a 
different vector bundle than in Theorem~\mbox{\ref{Result}}, cf.~\mbox{\cite[Theorem~2.23]{Catanese}}. 
As for sextic double solids branched along a surface with even set of $32$ and $40$ nodes, here we introduce the first 
proof that any general surface of this type admits $T_2(\resx)\ne 0$.

It happens that for all examples of non-rational sextic double solids described in Proposition~\ref{examples} 
their defects $d$ vanish, so~\mbox{\cite[Corollary A, Corollary 4.2]{Cheltsov_Park}} gives another proof of 
non-rationality of these double solids. However, using our method and \mbox{\cite[Proposition 1]{AM}} we prove 
stable non-rationality of all these varieties. Thus, Proposition~\ref{examples} introduces two new families of 
rationally connected stably non-rational varieties. 

In Section~\mbox{\ref{sect_conclusion}} we discuss a possible
example of a nodal sextic double solid where non-rationality can be proved by an Artin--Mumford obstruction but cannot be deduced from \cite{Cheltsov_Park}, 
and mention some other relevant open questions.

We point out that stable non-rationality of a very general (smooth) quartic double solid is known from~\cite{HT}.
However, the approach using the Artin--Mumford obstruction gives a more precise information about stable non-rationality for 
certain families of singular sextic double solids, cf.~\cite{Voisin}.

The paper is organized as follows: in Section~\mbox{\ref{sect_cascat}} we recall the construction~\mbox{\cite[Section 1]{CasCat_with_error}}
which helps us to associate with any even  set of nodes $w$ a special Casnati--Catanese bundle~$\E$. In 
Section~\mbox{\ref{sect_minimal_1/2_even}} we introduce useful properties of $1/2$-even sets of nodes on sextic surfaces
and prove Theorem~\mbox{\ref{if_codes_ne_0}}. In Section~\mbox{\ref{sect_defect}} we study the notion of defect. 
In Section~\mbox{\ref{sect_computations_of_defect}} 
we compute the defects of even sets of nodes with Casnati--Catanese bundles described in 
Theorem~\mbox{\ref{if_codes_ne_0}}, which helps us to prove Theorem~\mbox{\ref{Result}} in Section~\mbox{\ref{sect_proof}}. Also, this section
contains the proof of Proposition~\mbox{\ref{examples}}. In Section~\mbox{\ref{sect_conclusion}} we review an
explicit construction of a non-general subfamily of sextic double solids in the family $\mathcal{Z}_{32}$;
also there we discuss possible directions of further research.

\smallskip
\textbf{Acknowledgements.}
I am grateful to my advisor, Constantin Shramov, for suggesting this problem as well as for his patience and invaluable support.
I also thank Anton Fonarev and Lyalya Guseva for useful discussions. I was supported by the Foundation for the Advancement of Theoretical Physics and Mathematics ``BASIS''.

\section{Casnati and Catanese construction}\label{sect_cascat}
\subsection{Horrocks correspondence}

Denote by $R$ the homogeneous coordinate ring of the projective space~\mbox{$R = \mathbb{C}[x_0,x_1,x_2,x_3]$}. 
Consider the functor
\begin{align*}
 &\Gamma_* \colon \mathrm{Coh(\p^3)} \to R\grmod,\\
 &\Gamma_*(\G) =  \bigoplus_{i\in\mathbb{Z}} H^0(\p^3,\G(i)),
\end{align*}
where $\G$ is a coherent sheaf on $\p^3$ and $R\grmod$ is the category of
graded $R$-modules. We denote by $R\Gamma_*$ the right derived functor of $\Gamma_*$ between derived categories:
\begin{equation*}
 R\Gamma_* \colon \dbcoh \to \mathcal{D}^b(R\grmod).
\end{equation*}
Fix  an element $\C$ of $\dbcoh$ and two integers $r\leqslant s$,
then by $\tau_{\geqslant r}\tau_{\leqslant s}\C$ we denote the \emph{truncation} 
of~$\C$ (for definition see \cite[Section 2]{Walter}).
Moreover, by $H^i(\C)\in R\grmod$ we denote the cohomology module of $\C\in \dbcoh$ of degree $i$.
Denote by $D'$ the following full subcategory of $\mathcal{D}^b(R\grmod)$:
\begin{equation*}
 D' = \left\{ \C\in \mathcal{D}^b(R\grmod)\ |\ \dim(H^1(\C)\oplus H^2(\C)) < \infty,\ H^i(\C) = 0 \text{ for } i\ne 1,2\right\}
\end{equation*}

We say that \emph{a morphism $f\colon \G'\to \G''$ between vector bundles is equivalent to zero} and write~\mbox{$f\sim 0$} if it factors through
a direct sum of line bundles. Denote by $\mathrm{Bun}_{\p_3}'$ the category which objects are vector bundles over~$\p^3$ and
morphisms are quotients by the equivalence relation:
\begin{equation*}
 \Hom_{\mathrm{Bun}_{\p_3}'}(\G',\G'') = \Hom_{\mathrm{Bun}_{\p_3}}(\G',\G'')/\sim.
\end{equation*}
Note that for any line bundle $\mathcal{L}$ on the projective space one has $\tau_{\geqslant 1}\tau_{\leqslant2}R\Gamma_* (\mathcal{L}) = 0$,
hence, the functor 
\begin{equation*}
\tau_{\geqslant 1}\tau_{\leqslant2}R\Gamma_*\colon \dbcoh \to \mathcal{D}^b(R\grmod)
\end{equation*}
factors through $\mathrm{Bun}_{\p^3}'$ and takes values in $D'$. Thus, we have a functor 
\begin{equation*}
 \tau_{\geqslant 1}\tau_{\leqslant2}R\Gamma_*\colon\mathrm{Bun}_{\p_3}' \to D'.
\end{equation*}
The following theorem states that it is an equivalence of categories.
\begin{theorem}[{\cite[Proposition 2.10]{Walter}}]\label{thm_Horrocks_corr}
 There exists an inverse functor $\Syz\colon D'\to \mathrm{Bun}_{\p_3}'$ to $\tau_{\geqslant 1}\tau_{\leqslant2}R\Gamma_*$.
 Moreover, for any bundle $\G$ over $\p^3$ we have that 
 \begin{equation}\label{eq_property_of_syz}
  \G = \Syz(\tau_{\geqslant 1}\tau_{\leqslant2}R\Gamma_*(\G))\oplus \bigoplus_{i=1}^l \mathcal{L}_i,
 \end{equation}
 where $\mathcal{L}_i$ is a line bundle on $\p^3$ for each $1\leqslant i \leqslant l$.
\end{theorem}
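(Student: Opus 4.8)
Throughout write $\Psi=\tau_{\geqslant 1}\tau_{\leqslant 2}R\Gamma_*$ for the functor in question, and $H^i_*(\F)=\bigoplus_{d}H^i(\p^3,\F(d))$ for total cohomology. The plan is to prove that $\Psi$ is an equivalence by exhibiting a quasi-inverse $\Syz$ and checking both composites, together with full faithfulness of the induced functor on the quotient category $\mathrm{Bun}_{\p_3}'$.

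First I would construct $\Syz$ on objects. Given $\C\in D'$, I would choose a representative by a bounded complex $F^\bullet$ of finitely generated graded free $R$-modules; this is possible because $R$ has finite global dimension and the cohomology modules of $\C$ are finitely generated, indeed of finite length. Sheafifying term by term yields a complex $\widetilde{F}^\bullet$ of finite direct sums of line bundles on $\p^3$, whose cohomology sheaves are $\widetilde{H^i(\C)}$. These vanish for every $i$: by hypothesis $H^i(\C)=0$ unless $i\in\{1,2\}$, while $H^1(\C)$ and $H^2(\C)$ have finite length and hence sheafify to zero. Thus $\widetilde{F}^\bullet$ is an exact complex of sums of line bundles, and I would set $\Syz(\C)$ to be the syzygy bundle at the degree-$1$/degree-$2$ junction, e.g.\ $\Syz(\C)=\Image(\widetilde{F}^{1}\to\widetilde{F}^{2})$ up to the correct indexing convention. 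Local freeness of this sheaf follows from exactness of $\widetilde{F}^\bullet$ and smoothness of $\p^3$, by splitting the exact complex into short exact sequences whose remaining terms are already locally free.

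Next I would verify $\Psi\circ\Syz\cong \mathrm{id}_{D'}$. Breaking $\widetilde{F}^\bullet$ into short exact sequences of sheaves, twisting by $\OP_{\p^3}(d)$ and running the associated long exact sequences, I would use the vanishing $H^i(\p^3,\OP_{\p^3}(d))=0$ for $i=1,2$ and all $d$. This transports the computation of $H^1_*(\Syz(\C))$ and $H^2_*(\Syz(\C))$ back into the free complex $F^\bullet$, where they are identified with $H^1(\C)$ and $H^2(\C)$; moreover the connecting homomorphisms recover the gluing data, so that the whole object of $D'$ is reconstructed and not merely the two cohomology modules. The graded pieces $H^0_*$ and $H^3_*$ of $R\Gamma_*(\Syz(\C))$ are discarded precisely by the truncation $\tau_{\geqslant 1}\tau_{\leqslant 2}$.

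Then I would prove the splitting \eqref{eq_property_of_syz}, which is the content of $\Syz\circ\Psi\cong\mathrm{id}$ up to line bundle summands. Since $\Psi$ forgets the graded pieces $H^0_*$ and $H^3_*$, and in particular annihilates every line bundle, applying $\Syz$ can recover a bundle $\G$ only up to summands invisible to $\Psi$. The previous step gives $\Psi(\Syz(\Psi(\G)))\cong\Psi(\G)$, so $\G$ and $\Syz(\Psi(\G))$ have the same intermediate cohomology modules $H^1_*$ and $H^2_*$. Horrocks' theorem then asserts that two bundles with isomorphic intermediate cohomology agree after adding suitable sums of line bundles; choosing the free representative $F^\bullet$ to be minimal guarantees that $\Syz(\Psi(\G))$ has no line bundle summands of its own, and \eqref{eq_property_of_syz} follows. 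Finally, for full faithfulness on $\mathrm{Bun}_{\p_3}'$ I would match the morphisms annihilated by $\Psi$ with those factoring through a direct sum of line bundles: a line bundle is sent to the zero object, so every $f\sim 0$ maps to $0$, while conversely a morphism inducing the zero map on intermediate cohomology is shown to factor through a sum of line bundles by lifting against a minimal free presentation. The main obstacle is exactly this last matching together with \eqref{eq_property_of_syz}: one must pin down that the indeterminacy of the correspondence is precisely direct sums of line bundles, neither more nor less. This is the genuine content of Horrocks' theorem, and it rests on the splitting criterion that a bundle on $\p^3$ with vanishing intermediate cohomology is a direct sum of line bundles, rather than on any formal manipulation of derived categories.
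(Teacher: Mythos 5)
The paper offers no proof of this statement: it is quoted verbatim from \cite[Proposition 2.10]{Walter}, so there is nothing in the text to compare against. Your sketch is the standard Horrocks-correspondence argument (sheafify a minimal bounded free complex representing $\C$, take the syzygy bundle of the resulting exact complex of sums of line bundles, recover $H^1_*$ and $H^2_*$ by chasing twisted long exact sequences, and pin down the ambiguity via the splitting criterion that a bundle on $\p^3$ with no intermediate cohomology is a sum of line bundles), which is essentially Walter's own proof; you correctly identify the genuine content as that splitting criterion plus the faithfulness statement on the quotient category, rather than the formal derived-category bookkeeping.
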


\subsection{Construction of $\E$}
Here we recall the construction from \cite[Section 1]{CasCat}.
Let us fix a $1/2$-even set of nodes $w$ on a nodal sextic surface~$B$. The group $\Pic(\resb)$ is torsion-free.
\begin{lemma}\label{lemma_pic_tf}
 The Picard group of the resolution $\resb$ of a nodal sextic surface $B\subset \p^3$ is torsion-free.
\end{lemma}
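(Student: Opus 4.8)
The plan is to combine the regularity of $\resb$ with its simple connectedness. First I would show that the irregularity $q(\resb) = h^1(\resb,\mathcal{O}_{\resb})$ vanishes. From the ideal sheaf sequence $0\to \mathcal{O}_{\p^3}(-6)\to\mathcal{O}_{\p^3}\to\mathcal{O}_B\to 0$ together with the vanishings $H^1(\p^3,\mathcal{O}_{\p^3})=0$ and $H^2(\p^3,\mathcal{O}_{\p^3}(-6))=0$ one obtains $H^1(B,\mathcal{O}_B)=0$, and this uses only that $B$ is a hypersurface, not its smoothness. Since the nodes of $B$ are rational double points, the resolution $\sigma_B\colon\resb\to B$ satisfies $\sigma_{B*}\mathcal{O}_{\resb}=\mathcal{O}_B$ and $R^i\sigma_{B*}\mathcal{O}_{\resb}=0$ for $i>0$; the Leray spectral sequence then gives $H^1(\resb,\mathcal{O}_{\resb})=H^1(B,\mathcal{O}_B)=0$.

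Second, from the exponential sequence on the smooth surface $\resb$ and the equality $h^1(\resb,\mathcal{O}_{\resb})=0$, the group $\Pic^0(\resb)$ is trivial, so the first Chern class provides an injection $c_1\colon\Pic(\resb)\hookrightarrow H^2(\resb,\mathbb{Z})$. It therefore suffices to prove that $H^2(\resb,\mathbb{Z})$ is torsion-free. By the universal coefficient theorem the torsion subgroup of $H^2(\resb,\mathbb{Z})$ is isomorphic to the torsion of $H_1(\resb,\mathbb{Z})=\pi_1(\resb)^{\mathrm{ab}}$, so the whole lemma reduces to showing that $\resb$ has torsion-free first homology, for which simple connectedness is more than enough.

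Third, I would establish $\pi_1(\resb)=1$. The singular sextic $B$ is a hypersurface in $\p^3$ with only isolated singularities, hence simply connected by the Lefschetz-type theorem for singular hypersurfaces. To transfer this to the resolution I would use van Kampen: write $\resb$ as the union of $B\setminus\Sing(B)$ with the minimal resolutions of the local $A_1$-cones, glued along the links of the nodes. Each such resolution retracts onto its exceptional $\mathbb{P}^1$ and is simply connected, exactly like the topological cone on the link that one glues in to reconstruct $B$ from $B\setminus\Sing(B)$. Since both pieces are simply connected and the amalgamation is along the same links, van Kampen yields the same quotient of $\pi_1(B\setminus\Sing(B))$ in both cases, so $\pi_1(\resb)\cong\pi_1(B)=1$. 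Then $H_1(\resb,\mathbb{Z})=0$, hence $H^2(\resb,\mathbb{Z})$ is torsion-free, and the injection of the second step forces $\Pic(\resb)$ to be torsion-free.

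I expect the third step to be the main obstacle: invoking the correct Lefschetz statement for the singular surface $B$ and carefully comparing $\pi_1(\resb)$ with $\pi_1(B)$. An alternative I would keep in reserve is to argue on the blow-up $\resp^3$ of $\p^3$ at $\Sing(B)$, which is simply connected with torsion-free cohomology: since $\resb$ is a smooth surface in the linear system $\lvert 6H-2\sum_p E_p\rvert$, one could try to apply a Grothendieck--Lefschetz theorem for $\pi_1$ once the positivity of $6H-2\sum_p E_p$ has been verified. However, checking the required ampleness or nefness for arbitrary configurations of nodes is itself delicate, which is why I would prefer the van Kampen argument above.
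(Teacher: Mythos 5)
Your proof is correct, but it takes a genuinely different route from the paper. The paper disposes of the whole topological content in one stroke: by Tyurina's theorem the resolution $\resb$ is a deformation of, hence diffeomorphic to, a smooth sextic surface, so the Lefschetz hyperplane theorem for the smooth model gives torsion-freeness of $H^2(\resb,\mathbb{Z})$ (and, implicitly, $b_1=0$ so that the exponential sequence injects the torsion of $\Pic(\resb)$ into $H^2$). You instead prove the two needed facts by hand: the vanishing $q(\resb)=0$ via the ideal-sheaf sequence and the rationality of the nodes, and then $\pi_1(\resb)=1$ by combining the Lefschetz-type simple connectedness of the singular hypersurface $B$ with a van Kampen comparison of $\resb$ and $B$, using that the minimal resolution of an $A_1$ point and the cone on its link $\mathbb{RP}^3$ are both simply connected, so the two gluings kill the same normal subgroup of $\pi_1(B\setminus\Sing(B))$. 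Both arguments are sound; the paper's is shorter and outsources the work to the simultaneous-resolution theorem, while yours is more self-contained, makes explicit the $q=0$ step that the paper leaves implicit in the injectivity of $c_1$ on torsion, and avoids deformation theory at the cost of invoking the Goresky--MacPherson/Hamm--L\^e Lefschetz theorem for singular hypersurfaces and a careful local topological analysis. Your instinct to avoid the reserve argument via positivity of $6H-2\sum_p E_p$ on $\resp^3$ is right --- that nefness is exactly the kind of configuration-dependent statement one should not have to verify here.
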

\begin{proof}
  By \cite[Theorem 1]{Tyurina} the resolution $\resb$ of $B$ is a deformation of a smooth sextic surface $\mathscr{B}_t$. 
  Thus, $\resb$ is diffeomorphic to $\mathscr{B}_t$. By the Lefschetz hyperplane theorem $H^2(\mathscr{B}_t, \mathbb{Z})$
  is torsion-free, then so is $H^2(\resb,\mathbb{Z})$. Finally, by exponential exact sequence the torsion subgroup of $\Pic(\resb)$ 
  maps injectively to $H^2(\resb,\mathbb{Z})$; thus, we get the result.
\end{proof}
Now we introduce some more necessary notations.
\begin{notation}\label{notation_2}
By Lemma \ref{lemma_pic_tf} in the notation of Definition \ref{def_esn} there exists 
a unique divisor class~$D_w$ on the surface $\resb$ such that $2D_w = E_w$.
eDnote by $\Xw$ the corresponding double cover  of $\resb$ branched along~\mbox{$E_w$}:
\begin{equation*}\label{eq_X_w}
\Xw \xrightarrow[2:1]{\ \pi_w} \resb \xrightarrow{\ \sigma_B\ }  B.
\end{equation*}
The direct image of $\OP_{\Xw}$ under $\pi_w$ splits into the sum of line bundles:
\begin{equation*}\label{eq_pi(O_Xw)}
{\pi_w}_*(\OP_{\Xw}) = \OP_{\resb} \oplus \OP_{\resb}(-D_w). 
\end{equation*}
 Denote by $\F$
the direct image of the second summand under $\sigma$ and the inclusion $i$ (see Notation \ref{notation_main}):
\begin{equation*} 
\label{def_of_F}
\F:=i_*{\sigma}_* \OP_{\resb}(-D_w). 
\end{equation*}
\end{notation}
We are going to construct the morphism $\Phi$ of Theorem \ref{Catanese_Casnati} from a locally free resolution of the sheaf~$\F$. 
In order to get this resolution, let us consider the following complex $\C^{\bullet}$ representing an object $\C$ of the derived category 
of graded $R$-modules (cf. \cite[p. 240]{CasCat_with_error}):
\begin{equation}\label{eq_C}
 \C^i = \left\{ \begin{aligned}
                 &\bigoplus_{j\geqslant 2} H^1(B,\F(j)), &&\text{ if } i=1;\\
                 &\hspace{40pt}0, &&\text{ otherwise}.
                \end{aligned}
 \right.
\end{equation}
Thus, $\C$ is an object in $D'$. By Theorem \ref{thm_Horrocks_corr} there exists a vector bundle $\Syz(\C)$ 
 such that we have an isomorphisms of graded modules for $i=1$ and $2$:
 \begin{equation*}
 \bigoplus_{j\in\mathbb{Z}} H^i(\p^3,\Syz(\C)(j))\cong H^i(\C).
 \end{equation*}
 Then the inclusion $\C \to \bigoplus_{i,j} H^i(\p^3,\F(j))$ induces  the morphism of sheaves:
 \begin{equation*}
  \epsilon\colon \Syz(\C) \to \F,
 \end{equation*}
 which can be non-surjective. Find a bundle $L=\bigoplus_i \OP_{\p^3}(-i)^{\oplus m_i}$ of
 minimal rank such that we have surjections
 \begin{equation*}
  \epsilon\colon (\Syz(\C) \oplus L)\otimes\OP_{\p^3} \twoheadrightarrow \F,
 \end{equation*}
 for any integer $j$. Then we can describe the bundle $\E$ and the morphism $\Phi$ from Theorem~\ref{Catanese_Casnati} explicitly. 
 \begin{theorem}[{\cite{CasCat}, \cite[Corollary 0.4]{CasCat_with_error}}]\label{thm_CC}
  Assume $w$ is a $1/2$-even set of nodes on a nodal surface $B$ of even degree and let $\E=\Syz(\C)\oplus L$ in the above notations.
  Then we have the following exact sequence
 \begin{equation}\label{es_F}
 0\to \E^{\vee}(-\deg(B) - \delta)\xrightarrow{\Phi} \E \to \F\to 0.
\end{equation}
Moreover, we have the following condition on the Chern class and the rank of $\E$:
\begin{equation}\label{eq_deg_rk}
 2c_1(\E) + \mathrm{rk}(\E)(\deg(B)+\delta) = \deg(B).
\end{equation}
\end{theorem}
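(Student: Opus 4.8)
The plan is to realize the sequence~\eqref{es_F} as the (essentially unique) length-one locally free resolution of $\F$ on $\p^3$, and to identify its kernel with $\E^{\vee}(-\deg(B)-\delta)$ by means of a self-duality of $\F$. First I would record that $\F=i_*\sigma_{B*}\OP_{\resb}(-D_w)$ is a maximal Cohen--Macaulay sheaf of codimension one. It is supported on the hypersurface $B$, and since $\sigma_B\colon\resb\to B$ is the minimal resolution of rational double points one checks $R^1\sigma_{B*}\OP_{\resb}(-D_w)=0$ and that the pushforward is reflexive, hence MCM on the normal surface $B$. By the Auslander--Buchsbaum formula $\F$ then has projective dimension one over $\OP_{\p^3}$, so the surjection $\epsilon\colon\E\to\F$ furnished by the construction has locally free kernel $\K$ and gives
\begin{equation*}
 0\to \K \to \E \xrightarrow{\epsilon} \F\to 0.
\end{equation*}
The point of the Horrocks correspondence (Theorem~\ref{thm_Horrocks_corr}) and of the minimality in the choice of $L$ is to guarantee that $\E$ carries exactly the intermediate cohomology of $\F$ and has no superfluous line-bundle summands, so that this resolution is canonical up to isomorphism.

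Second, and this is the heart of the matter, I would establish the self-duality
\begin{equation*}
 \shext^1_{\p^3}(\F,\OP_{\p^3})\cong \F(\deg(B)+\delta).
\end{equation*}
Writing $d=\deg(B)$, Grothendieck--Serre duality for $i\colon B\hookrightarrow\p^3$ together with relative duality for $\sigma_B$ gives $\shext^1_{\p^3}(\F,\OP_{\p^3})\cong i_*\sigma_{B*}\bigl(\OP_{\resb}(D_w)\otimes\omega_{\resb}\bigr)\otimes\OP_{\p^3}(4)$. Since the nodes are rational double points the resolution is crepant, and adjunction on the blow-up $\resp^3$ yields $\omega_{\resb}\cong\OP_{\resb}((d-4)H)|_{\resb}$, so the right-hand side equals $i_*\sigma_{B*}\OP_{\resb}(D_w+dH)$. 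Using $2D_w=E_w=\delta H+\sum_{p\in w}E_p|_{\resb}$, the divisor classes $D_w+dH$ and $-D_w+(d+\delta)H$ differ exactly by $\sum_{p\in w}C_p$, where $C_p=E_p|_{\resb}$ is the exceptional $(-2)$-curve over $p$. The key local computation is that $D_w$ has degree $-1$ on each $C_p\cong\p^1$; hence tensoring by $\OP_{\resb}(\sum_{p}C_p)$ changes the restriction to each $C_p$ from $\OP(1)$ to $\OP(-1)$, whose $H^0$ and $H^1$ vanish, so the pushforward along $\sigma_B$ is unchanged. This gives $\shext^1_{\p^3}(\F,\OP_{\p^3})\cong i_*\sigma_{B*}\OP_{\resb}(-D_w+(d+\delta)H)=\F(d+\delta)$.

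Third, I would dualize the resolution. Applying $\shhom(-,\OP_{\p^3})$ and using that $\F$ has projective dimension one, the long exact sequence collapses, and the self-duality turns it into
\begin{equation*}
 0\to \E^{\vee}(-d-\delta)\to \K^{\vee}(-d-\delta)\to \F\to 0.
\end{equation*}
This is again a length-one locally free resolution of $\F$; comparing it with $0\to\K\to\E\to\F\to0$ and invoking the uniqueness built into the Horrocks construction, one obtains $\E\cong\K^{\vee}(-d-\delta)$, equivalently $\K\cong\E^{\vee}(-d-\delta)$, and reads off the map $\Phi\colon\E^{\vee}(-d-\delta)\to\E$. The remaining and most delicate point is that $\Phi$ is symmetric: this holds because the self-duality isomorphism constructed above is itself symmetric, so the comparison of the two resolutions can be chosen to equal its own transpose. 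Making this compatibility precise is exactly where the special nature of even sets of nodes (through $2D_w=E_w$) is genuinely used, and I expect it to be the main obstacle.

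Finally, the numerical relation~\eqref{eq_deg_rk} is a formal consequence of~\eqref{es_F}: taking first Chern classes gives $c_1(\E)=c_1\bigl(\E^{\vee}(-d-\delta)\bigr)+c_1(\F)$, and since $\F$ is generically a line bundle on the degree-$d$ surface $B$ we have $c_1(\F)=d$, while $c_1\bigl(\E^{\vee}(-d-\delta)\bigr)=-c_1(\E)-\mathrm{rk}(\E)(d+\delta)$; rearranging yields $2c_1(\E)+\mathrm{rk}(\E)(\deg(B)+\delta)=\deg(B)$.
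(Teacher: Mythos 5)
The paper does not actually prove this statement: Theorem~\ref{thm_CC} is imported wholesale from Casnati--Catanese (\cite{CasCat}, \cite[Corollary 0.4]{CasCat_with_error}), so there is no internal proof to compare against. Your sketch does follow the broad strategy of the cited proof: $\F=i_*\sigma_{B*}\OP_{\resb}(-D_w)$ is maximal Cohen--Macaulay of codimension one, hence of projective dimension one; the self-duality $\shext^1_{\p^3}(\F,\OP_{\p^3})\cong\F(\deg(B)+\delta)$ comes from $2D_w=E_w$ together with the fact that $D_w$ has degree $-1$ on each exceptional $(-2)$-curve; one then dualizes the resolution. The pieces you carry out in full --- the vanishing of $R^1\sigma_{B*}\OP_{\resb}(-D_w)$, the computation showing that twisting by $\sum_{p\in w}C_p$ leaves the pushforward unchanged, and the Chern-class identity \eqref{eq_deg_rk} --- are all correct.

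Two points, however, are genuine gaps rather than routine verifications. First, the identification $\K\cong\E^{\vee}(-\deg(B)-\delta)$: you appeal to ``uniqueness built into the Horrocks construction,'' but a Schanuel-type argument is not available in $\mathrm{Coh}(\p^3)$ (locally free sheaves are not projective objects there), and Theorem~\ref{thm_Horrocks_corr} determines a bundle only up to line-bundle summands. You must actually verify that $\K^{\vee}(-\deg(B)-\delta)$ realizes the same intermediate cohomology object $\C$ and that the dualized resolution acquires no superfluous line-bundle summands; without this the two resolutions are only stably equivalent, which is weaker than what \eqref{es_F} asserts. Second, the symmetry of $\Phi$, which you yourself defer as ``the main obstacle,'' is the real content of the theorem: it is what exhibits $B$ as a symmetric determinantal surface and is used throughout the rest of the paper (e.g.\ in the block decompositions of Section~\ref{sect_computations_of_defect}), and it is precisely the delicate point on which the original reference needed correction. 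As written, your argument establishes the existence of some length-one locally free resolution of the correct numerical shape, but not the theorem as stated.
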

 
 So by~\mbox{\cite[Corollary 0.4]{CasCat_with_error}} for any even set of nodes $w$ we can associate a unique up to an 
 isomorphism Casnati-Catanese vector bundle $\E$ of $w$. From now on we consider only Casnati--Catanese bundles obtained from this construction.

\section{Minimal $1/2$-even sets of nodes}\label{sect_minimal_1/2_even}
In this section we are going to show the important properties of minimal even sets of nodes (see Definition~\ref{def_minimal}).
Our goal here is to describe explicitly all nodal sextic surfaces with a minimal~$1/2$-even set of nodes and
prove Theorem~\mbox{\ref{if_codes_ne_0}}. The first lemma is useful for surfaces of any degree
and any minimal even set of nodes.
\begin{lemma}\label{property_of_minimality}
 Assume that $w$ is a minimal $\delta/2$-even set of nodes on a nodal surface $B$, 
 then~\mbox{$h^1(\p^3,\F(n))$} vanishes for any~\mbox{$n\leqslant 0$} and~\mbox{$n\geqslant \deg(B)-4+\delta$}.
\end{lemma}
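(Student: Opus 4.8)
The plan is to translate the assertion into a vanishing statement for line bundles on the smooth surface $\resb$, reduce the two ranges to a single boundary case by the built-in symmetry of the Casnati--Catanese resolution, and then identify that boundary case with a vanishing that is governed precisely by minimality.

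First I would set up the dictionary between $\F$ and $\resb$. Since the nodes of $B$ are rational double points, $\sigma_B$ is crepant and $R^{>0}{\sigma_B}_*\OP_{\resb}=0$; moreover each exceptional $(-2)$-curve $C_p:=E_p|_{\resb}$ satisfies $D_w\cdot C_p=\tfrac12E_w\cdot C_p=-1$, so $\OP_{\resb}(-D_w)$ has degree $1$ on $C_p$ and hence $R^1{\sigma_B}_*\OP_{\resb}(-D_w)=0$. Using the projection formula and $H|_{\resb}=\sigma_B^*\OP_B(1)$, this gives for every $i$ and $n$ the isomorphism $H^i(\p^3,\F(n))\cong H^i(\resb,\OP_{\resb}(nH-D_w))$. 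Adjunction in the blow-up $\resp^3$ yields $K_{\resb}=(\deg(B)-4)H|_{\resb}$, so Serre duality on $\resb$ reads $h^1(\resb,nH-D_w)=h^1(\resb,(\deg(B)-4-n)H+D_w)$.

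Second, I would reduce the two ranges to one. Combining the symmetric resolution \eqref{es_F} with Walter's correspondence (Theorem \ref{thm_Horrocks_corr}) forces $H^2(\p^3,\E(n))=0$ for all $n$ and $H^1(\p^3,\E(n))=0$ for $n\leq 1$; feeding this into the long exact sequence of \eqref{es_F} makes the intermediate cohomology of $\F$ invariant under $n\mapsto\deg(B)+\delta-4-n$, an involution interchanging the ranges $n\leq 0$ and $n\geq\deg(B)-4+\delta$. Hence it suffices to treat $n\leq 0$. There I would argue by descending induction, cutting with a general hyperplane section $\Gamma\in|H|$ — a smooth curve disjoint from the $C_p$ — via
\[
 0\to\OP_{\resb}((n-1)H-D_w)\to\OP_{\resb}(nH-D_w)\to\OP_\Gamma((nH-D_w)|_\Gamma)\to 0 .
\]
Because $\Gamma\cap C_p=\emptyset$, the line bundle $(nH-D_w)|_\Gamma$ has degree $(n-\tfrac{\delta}{2})\deg(B)$, which is negative for $n<0$; then $H^0(\Gamma,\,\cdot\,)=0$ and the connecting map propagates $h^1=0$ downward from the base value $n=0$. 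The one borderline step, $\delta=0$ and $n=0$, needs $D_w|_\Gamma\neq\OP_\Gamma$, which follows from the injectivity of $\Pic(\resb)\hookrightarrow\Pic(\Gamma)$ (Lefschetz, using $h^1(\OP_{\resb})=0$) since $D_w\neq 0$. This leaves exactly the base case $n=0$.

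The heart of the argument, and the only place minimality enters, is the base case $h^1(\resb,-D_w)=0$. I would read it as the irregularity of the double cover $\pi_w\colon\Xw\to\resb$ of Notation \ref{notation_2}: from ${\pi_w}_*\OP_{\Xw}=\OP_{\resb}\oplus\OP_{\resb}(-D_w)$ and $h^1(\OP_{\resb})=h^1(B,\OP_B)=0$ one obtains $q(\Xw)=h^1(\resb,-D_w)$. A nonzero class here is an anti-invariant holomorphic $1$-form on $\Xw$, and I expect to show that such a form splits the branch divisor $E_w$ and thereby produces a proper even subset $w'\subsetneq w$, contradicting Definition \ref{def_minimal}; the Serre-dual borderline value reduces to the same invariant after contracting the $(-1)$-curves $\pi_w^{-1}(C_p)$ on $\Xw$. \textbf{Main obstacle.} The formal scaffolding above (the cohomology dictionary, adjunction, the resolution symmetry and the hyperplane induction) is routine, but making precise the implication ``$w$ has no proper even subset'' $\Rightarrow$ ``$H^1(\resb,-D_w)=0$'' is the genuine difficulty. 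I would attack it through the Stein/Albanese factorization of $\Xw$: any positive-dimensional Albanese image, or equivalently any nonzero $(-1)$-eigen-$1$-form, must descend to a nontrivial decomposition of $E_w$ and hence to a nonempty proper even subset of $w$, which minimality rules out, giving $q(\Xw)=0$.
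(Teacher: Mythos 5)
Your skeleton matches the paper's: translate everything to line bundles on $\resb$, prove the base case $n=0$, run a descending hyperplane-section induction for $n<0$, and obtain the range $n\geqslant\deg(B)-4+\delta$ by a duality exchanging the two ranges. For that last step the paper applies Serre duality on $\resb$ directly (equation \eqref{eq_Serre_duality}, using $K_{\resb}=(\deg(B)-4)H|_{\resb}$ and \cite[Remark 2.15]{Catanese}), whereas you extract the same involution $n\mapsto\deg(B)+\delta-4-n$ from the symmetric resolution \eqref{es_F}; your route works but needs Theorem \ref{thm_CC} as input and the precise truncation data of $\Syz(\C)$, so it is heavier than necessary.

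The genuine gap is exactly the step you yourself flag as the ``main obstacle'': the implication \emph{$w$ minimal} $\Rightarrow$ $h^1(\Xw,\OP_{\Xw})=0$. You do not prove it; you only announce a plan (Albanese/Stein factorization, an anti-invariant $1$-form ``splitting'' $E_w$ into a proper even subset), and the assertion that a nonzero anti-invariant holomorphic $1$-form on $\Xw$ produces a nonempty proper even subset of $w$ is precisely the content that needs an argument. The paper does not reprove this: it quotes the inequality $\dim(\codes\cap 2^w)\geqslant 1+2h^1(\Xw,\OP_{\Xw})$ from \cite[Lemma 2]{Beauville} (cf.\ \cite[Theorem 4.5]{Jaffe-Ruberman}) and combines it with the observation that minimality forces $\dim(\codes\cap 2^w)=1$. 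So your proof is incomplete at its central point, and the missing ingredient is Beauville's lemma (or an actual proof of your claimed splitting, which in Beauville's argument goes through $2$-torsion in $H_1$ of the double cover rather than through holomorphic $1$-forms).

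Two smaller remarks. First, your worry about the borderline step ($\delta=0$, passing from $n=0$ to $n=-1$, where $(0\cdot H-D_w)|_\Gamma$ has degree $0$) is well placed — the paper's claim that $\deg(2D_w\cap H)>0$ is only correct for $\delta=1$ — but your proposed fix is not: restriction $\Pic(\resb)\to\Pic(\Gamma)$ to a hyperplane section of a surface is not injective in general (already $\OP(1,-1)$ on a quadric restricted to a $(1,1)$-curve is trivial). What one actually needs is $D_w|_\Gamma\neq\OP_\Gamma$, which follows, e.g., because the induced \'etale double cover $\pi_w^{-1}(\Gamma)\to\Gamma$ is connected by Bertini connectedness applied to the finite map $\Xw\to B\subset\p^3$. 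Second, your preliminary dictionary ($R^1{\sigma_B}_*\OP_{\resb}(-D_w)=0$ via $D_w\cdot C_p=-1$, hence $H^i(\p^3,\F(n))\cong H^i(\resb,nH-D_w)$) is correct and agrees with what the paper uses implicitly.
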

\begin{proof}
 For any set of points $w$ denote by $2^w$ the $\mathbb{F}_2$-vector space generated by all its elements. 
 When $w$ is an even set of nodes we consider $2^w$ as a subspace of $2^{\Sing(B)}$. 
 Recall that $\codes$ is also a subspace in $2^{\Sing(B)}$, 
 generated by all even sets of nodes on $B$. 
 
 On the one hand, the minimality of the even set $w$ implies that the space $\codes\cap 2^w$ of all even subsets of nodes inside~$w$ is one-dimensional.
 On the other hand, in Notation~\ref{notation_2} by~\mbox{\cite[Lemma 2]{Beauville}} we have the following inequality 
 (cf. also~\mbox{\cite[Theorem 4.5]{Jaffe-Ruberman}}):
\begin{equation*}
 1=\dim(\codes\cap 2^w)\geqslant 1+ h^1(\Xw, \mathbb{Z})= 1+2 h^1(\Xw,\OP_{\Xw}) .
\end{equation*}
Thus, one has $h^1(\Xw,\OP_{\Xw}) = 0$. Then since the finite morphism $\pi_w$ has no higher direct images
the ranks \mbox{$h^1(B, \OP_{B})$} and $h^1(\p^3,\F)$ also vanish.

To show the same for all negative twists of $\F$, we choose a general hyperplane $H$ in $\p^3$. Then the intersection
$H\cap B$ is a smooth curve by Bertini theorem. The divisor $2D_w = E_w$ is effective by definition. Then so is the intersection $2D_w\cap H$;
thus, $\deg(2D_w\cap H)>0$. Therefore, the sheaf $\F(n)|_H$ is of strictly negative degree for all~\mbox{$n\leqslant0$}. 
Consider the exact sequence:
\begin{equation}\label{eq_es_for_F}
 0\to \F(n-1) \to\F(n)\to \F(n)|_{H}\to 0.
\end{equation}
By discussion above $h^0(\p^3,\F(n)|_H)=0$. Thus, by induction on $-n$ for all negative $n$ we prove that $h^1(\p^3,\F(n)) = 0$. 

Finally, use Serre duality on the smooth surface $\resb$. Since $B$ has du Val singularities the canonical class 
of $\resb$ is equal to~\mbox{$\sigma^{*} (\OP_{\p^3}(\deg(B)-4)|_{B})$}.
Then by \cite[Remark 2.15]{Catanese} we have
\begin{multline}\label{eq_Serre_duality}
 H^i(\p^3,\F(n)) = H^i\left(\resb,\OP_{\resb}\left(nH - D_w\right)\right) \cong
 \\ \cong H^{2-i}\left(\resb,\OP_{\resb}\left((\deg(B)-4-n)H+ D_w\right)\right) ^{\vee} \cong\\
 \cong H^{2-i}\left(\resb,\OP_{\resb}\left((\deg(B)-4+\delta-n)H- D_w\right)\right)^{\vee} = \\ =  H^{2-i}(\p^3,\F(\deg(B)-4+\delta-n))^{\vee}.
\end{multline}
Applying this isomorphism we prove the assertion of the lemma.
\end{proof}

From now on we restrict to the case when $B$ is a sextic surface and $w$ is a $1/2$-even set of nodes. 
Following Endrass \cite{Endrass99} we say that~$w$  \emph{is cut out by a plane} if there exists a plane~\mbox{$\Pi\subset \p^3$} intersecting
$B$ along a curve $C$ with multiplicity~2, so $\Pi\cdot B = 2C$ with the following property
\begin{equation*}
 w=\Sing(B)\cap C.
\end{equation*}
Even sets of nodes which are cut out by planes were studied in~\mbox{\cite[Proposition 2.4]{Endrass99}}, 
see also Proposition \ref{prop_es_cut_by_plane} below. 

Now assume that $w$ is not cut out by a plane. Consider the following graded $R$-module.

\begin{equation}\label{M}
 M=\bigoplus_{n\in \mathbb{Z}} H^0(\p^3,\F(n)).
\end{equation}
By the next assertion the module $M$ is generated by its subspace:
\begin{lemma}\label{m_2_m_3_m_4}
  If $w\in \codes$ is a $1/2$-even set of nodes which not cut out by a plane, then the graded module~\mbox{$M$} 
  is generated by its graded components~\mbox{$H^0(\p^3,\F(2))$, $H^0(\p^3,\F(3))$}, and~$H^0(\p^3,\F(4))$.
 \end{lemma}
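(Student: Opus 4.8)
The plan is to prove that the graded module $M = \bigoplus_n H^0(\p^3,\F(n))$ is generated in degrees $2$, $3$, and $4$ by combining the vanishing results of Lemma~\ref{property_of_minimality} with the structure of $\F$ coming from the Casnati--Catanese construction. Since $\F = i_*\sigma_*\OP_{\resb}(-D_w)$ is supported on the sextic surface $B$, I would work with the exact sequence \eqref{eq_es_for_F} obtained by restricting to a general hyperplane $H$, namely
\begin{equation*}
 0\to \F(n-1) \to \F(n) \to \F(n)|_H \to 0,
\end{equation*}
and use the long exact sequence in cohomology to pass information between consecutive twists. The key point is that generation of $M$ in a range of degrees is controlled by the surjectivity of the multiplication maps $H^0(\p^3,\OP_{\p^3}(1))\otimes H^0(\p^3,\F(n)) \to H^0(\p^3,\F(n+1))$, and these maps can be analyzed degree by degree via the restriction sequence together with Castelnuovo--Mumford regularity-type arguments.

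First I would pin down the range of nonzero cohomology. By Lemma~\ref{property_of_minimality}, since $w$ is a minimal $1/2$-even set of nodes on a sextic (so $\deg(B)=6$, $\delta=1$), the group $h^1(\p^3,\F(n))$ vanishes for $n\leqslant 0$ and for $n\geqslant \deg(B)-4+\delta = 3$. This confines the interesting $H^1$ to the degrees $n=1$ and $n=2$. Next I would establish that $H^0(\p^3,\F(n)) = 0$ for $n\leqslant 1$: the hypothesis that $w$ is \emph{not} cut out by a plane is exactly what rules out sections of $\F$ in the lowest degrees, since a nonzero section of $\F(1) = \OP_{\resb}(H - D_w)$ would correspond to an effective divisor realizing $D_w$ as cut by a plane, contradicting Endrass's criterion. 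This is where the ``not cut out by a plane'' assumption enters, and identifying precisely why it forces $H^0(\p^3,\F(1))=0$ (and $H^0(\p^3,\F)=0$) is the conceptual heart of the argument.

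With these vanishings in hand, I would run the generation argument using the hyperplane restriction. The sheaf $\F(n)|_H$ is a line bundle on the smooth plane curve $C = H\cap B$, so its cohomology is governed by its degree on $C$; for $n\geqslant$ some explicit bound this line bundle is globally generated and nonspecial, which makes the multiplication maps on $C$ surjective. Lifting surjectivity from $C$ back to $\p^3$ via \eqref{eq_es_for_F} requires controlling the obstruction sitting in $H^1(\p^3,\F(n-1))$; the vanishing $h^1(\p^3,\F(n))=0$ for $n\geqslant 3$ is exactly what makes this lifting work for all sufficiently high degrees, so that $M$ is generated in degrees $\leqslant 4$. The remaining task is to check that nothing is needed below degree $2$, which follows from $H^0(\p^3,\F(n))=0$ for $n\leqslant 1$ established above.

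The main obstacle I anticipate is the bookkeeping at the boundary degrees $n=2,3,4$, where the $H^1$ groups do not all vanish and one cannot simply invoke a clean regularity statement. Here I would need to verify surjectivity of the multiplication maps $H^0(\F(2))\otimes V \to H^0(\F(3))$ and $H^0(\F(3))\otimes V\to H^0(\F(4))$ by hand, tracking the long exact cohomology sequences for the restriction to $H$ and using Serre duality \eqref{eq_Serre_duality} on $\resb$ to convert statements about high twists into statements about low ones. The delicate step is ensuring that the contribution of $H^1(\p^3,\F(2))$ (which may be nonzero) does not obstruct generation in degree $3$; I expect this to require a careful comparison of dimensions, possibly using the minimality hypothesis once more to bound $h^1$ from above, rather than any single slick vanishing.
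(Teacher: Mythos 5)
Your first two paragraphs line up with the paper's argument: the vanishing $h^1(\p^3,\F(n))=0$ for $n\leqslant 0$ and $n\geqslant 3$ comes from Lemma~\ref{property_of_minimality}, the vanishing $h^0(\p^3,\F(1))=0$ comes from the ``not cut out by a plane'' hypothesis (the paper cites Endrass for exactly this), and then $h^0(\p^3,\F(n))=0$ for all $n\leqslant 1$ follows from the restriction sequence \eqref{eq_es_for_F}. The paper then simply observes that together with $h^2(\p^3,\F(n))=h^0(\p^3,\F(3-n))=0$ for $n\geqslant 2$ (Serre duality \eqref{eq_Serre_duality}) and $h^3=0$ (the support of $\F$ is a surface), these vanishings say precisely that $\F(4)$ is Castelnuovo--Mumford regular: $h^1(\F(3))=h^2(\F(2))=h^3(\F(1))=0$. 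Mumford's lemma then gives surjectivity of $H^0(\F(n))\otimes V^{\vee}\to H^0(\F(n+1))$ for all $n\geqslant 4$, and that is the whole proof. There is no need for the hyperplane-restriction bootstrapping in your third paragraph; the clean regularity statement you say cannot be invoked is in fact available.

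The genuine problem is your last paragraph. You state that the remaining task is to verify surjectivity of $H^0(\F(2))\otimes V^{\vee}\to H^0(\F(3))$ and $H^0(\F(3))\otimes V^{\vee}\to H^0(\F(4))$. These surjectivities are not needed --- the lemma allows generators in degrees $2$, $3$ \emph{and} $4$, so nothing has to be proved about multiplication into degrees $3$ and $4$ --- and they are in general false. For instance, in the case $(k,m_2,m_3,m_4)=(0,0,6,0)$ of Proposition~\ref{prop_1_2_even_d=0} one has $H^0(\F(2))=0$ while $H^0(\F(3))\neq 0$, so the first map is certainly not surjective; more generally $m_3$ and $m_4$ count exactly the new generators needed in degrees $3$ and $4$. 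If you pursued this step you would be trying to prove a false statement. The fix is simply to drop it: generation in degrees $\leqslant 4$ needs only surjectivity from degree $4$ upward (supplied by the regularity of $\F(4)$) together with $H^0(\F(n))=0$ for $n\leqslant 1$, both of which you already have.
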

 \begin{proof}
 By Lemma~\mbox{\ref{property_of_minimality}} we have $h^1(\p^3,\F(n)) = 0$ for $n\ne 1,\ 2$. 
 Since $w$ is not cut out by a plane by \cite[p. 89]{Endrass98} we have $h^0(\p^3,\F(1))= 0$.
 Thus, by \eqref{eq_es_for_F} one has $h^0(\p^3,\F(n))= 0$ for all~\mbox{$n\leqslant 1$}. By \eqref{eq_Serre_duality} we get:  
 \begin{equation*}
   h^2(\p^3,\F(n))= h^0(\p^3,\F(3-n))=0,
 \end{equation*}
 for all $n\geqslant 2$.
 Finally, since the support of $\F$ is a surface $B$, all groups $H^3(\p^3,\F(n))$ vanish for all integers $n$.
 Therefore, $\F(4)$ is Castelnuovo--Mumford regular 
 in the sense of~\mbox{\cite[Lecture 14]{Mumford}}). Then by loc. cit. we get the assertion.
 \end{proof}
 Now using Lemma~\mbox{\ref{m_2_m_3_m_4}} 
 we can describe the  Casnati--Catanese vector bundles $\E$ of all the~\mbox{$1/2$-even} minimal sets on a sextic surface. 
 \begin{proposition}\label{description_of_E_1_2_even}
   If $w\in \codes$ is a minimal $1/2$-even set of nodes, then there exists a Casnati--Catanese 
   bundle which is isomorphic to
   \begin{equation*}
    \E  = {\Omega_{\p^3}^1(-2)} ^{\oplus k} \oplus \left( \bigoplus \OP_{\p^3}(-i)^{\oplus m_i}\right),
   \end{equation*}
   for some non-negative integers $k$ and $m_i$ depending only on $w$. 
   Moreover, if~$w$ is not cut out by a plane, then 
   \begin{equation}\label{eq_form_of_CC_bundle}
   \E = ({\Omega_{\p^3}^1(-2)}\otimes W) \oplus (\OP_{\p^3}(-2)\otimes U_2)\oplus (\OP_{\p^3}(-3)\otimes U_3) \oplus (\OP_{\p^3}(-4)\otimes U_4).
   \end{equation}    
   where $W$ and $U_i$ are respectively $k = k(w)$-dimensional and $m_i = m_i(w)$-dimensional vector spaces. Also in this case we have such equalities:
   \begin{equation*}
    \begin{split}
   & h^0(\p^3,\F(2)) = m_2 + k;\\
   & h^1(\p^3,\F(2)) = k;\\
   & h^0(\p^3,\F(3)) = m_3, \text{ if } m_2=0.     
   \end{split}
   \end{equation*}
   More generally, $H^0(\p^3,\F(3))$ is generated by the image of $H^0(\p^3,\F(2))\otimes H^0(\p^3,\OP_{\p^3}(1))$ in it and $m_3$ additional generators.
 \end{proposition}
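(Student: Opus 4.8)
The plan is to work directly with the explicit Casnati--Catanese bundle $\E = \Syz(\C)\oplus L$ produced by Theorem~\ref{thm_CC}, and to treat its two summands by different mechanisms: the summand $\Syz(\C)$ is governed by the Horrocks correspondence of Theorem~\ref{thm_Horrocks_corr}, hence entirely by the intermediate cohomology module of $\C$, whereas the line-bundle part $L$ is governed by the minimal generators of the section module $M$ of~\eqref{M}. So I would first determine $\Syz(\C)$, then constrain $L$, and finally read off the numerical identities from the sequence~\eqref{es_F}.

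First I would pin down $\Syz(\C)$. Since $w$ is minimal, Lemma~\ref{property_of_minimality} gives $h^1(\p^3,\F(n))=0$ for $n\leqslant 0$ and for $n\geqslant 3$. As $\C$ is assembled from $\bigoplus_{j\geqslant 2}H^1(B,\F(j))$, its only nonzero cohomology module is $H^1(\C)=H^1(\p^3,\F(2))$, of dimension $k:=h^1(\p^3,\F(2))$, concentrated in the single internal degree $2$, with $H^2(\C)=0$. Being supported in one cohomological degree and one internal degree, $\C$ carries no extension data, so by Theorem~\ref{thm_Horrocks_corr} the bundle $\Syz(\C)$ is the unique object of $\mathrm{Bun}_{\p_3}'$ whose $H^1$-module is $\mathbb{C}^{k}$ in degree $2$ and whose $H^2$-module vanishes. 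Bott's formulas identify this bundle as $\Omega^1_{\p^3}(-2)^{\oplus k}$: indeed $H^1(\p^3,\Omega^1_{\p^3}(j))\neq 0$ only for $j=0$ while $H^2(\p^3,\Omega^1_{\p^3}(j))=0$ for all $j$, so $\Omega^1_{\p^3}(-2)$ realizes exactly this cohomology and has no line-bundle summands as required by~\eqref{eq_property_of_syz}. Combined with $L=\bigoplus_i\OP_{\p^3}(-i)^{\oplus m_i}$, which is a sum of line bundles by construction, this already yields the first, general decomposition; the invariants $k$ and $m_i$ depend only on $w$ because $k=h^1(\p^3,\F(2))$ and the $m_i$ are minimal-generator counts of the module $M$, all of which are intrinsic to $\F$, hence to $w$.

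Next, assuming $w$ is not cut out by a plane, I would restrict the degrees occurring in $L$. By Lemma~\ref{m_2_m_3_m_4} the module $M$ is generated in degrees $2,3,4$. On the other hand $H^0(\p^3,\Omega^1_{\p^3}(-2)(j))=H^0(\p^3,\Omega^1_{\p^3}(j-2))$ vanishes for $j\leqslant 3$, so the summand $\Syz(\C)=\Omega^1_{\p^3}(-2)^{\oplus k}$ contributes no sections below degree $4$. Consequently every minimal generator of $M$ in degrees $2$ and $3$, together with those generators in degree $4$ not already supplied by $\Syz(\C)$, must come from $L$; minimality of the rank of $L$ then forces $L=\OP_{\p^3}(-2)^{\oplus m_2}\oplus\OP_{\p^3}(-3)^{\oplus m_3}\oplus\OP_{\p^3}(-4)^{\oplus m_4}$, which is the refined decomposition~\eqref{eq_form_of_CC_bundle} with $W$ and the $U_i$ of the stated dimensions.

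Finally, the numerical identities follow by substituting this description of $\E$ into~\eqref{es_F} (with $\deg B=6$, $\delta=1$, so that the kernel is $\E^{\vee}(-7)$) and computing cohomology by Bott's formulas and the Serre-duality isomorphism~\eqref{eq_Serre_duality}. The equality $h^1(\p^3,\F(2))=k$ is essentially the defining property of $\Syz(\C)$; the section counts in degrees $2$ and $3$ are obtained from the cohomology of the line-bundle part of $\E$ together with the vanishing of the relevant cohomology of the kernel $\E^{\vee}(-7)$ in those degrees, and the generation statement for $H^0(\p^3,\F(3))$ is precisely the Castelnuovo--Mumford regularity input already used in Lemma~\ref{m_2_m_3_m_4}, once $m_3$ is interpreted as the number of new degree-$3$ generators. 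The hard part will be exactly this last bookkeeping: one must carefully separate the contribution of the cotangent summand $\Omega^1_{\p^3}(-2)^{\oplus k}$ (whose sections appear only from degree $4$ on, while its $H^1$ sits precisely in degree $2$) from that of the line bundles, and verify that the kernel term $\E^{\vee}(-7)$ produces no unexpected cohomology in the degrees of interest, so that the long exact sequences collapse to the claimed equalities.
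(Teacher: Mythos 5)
Your argument follows essentially the same route as the paper's: minimality together with Lemma~\ref{property_of_minimality} concentrates $H^1(\C)$ in the single internal degree $2$, the Horrocks correspondence of Theorem~\ref{thm_Horrocks_corr} then identifies $\Syz(\C)$ with $W\otimes\Omega^1_{\p^3}(-2)$ (the paper rules out line-bundle summands via semistability and non-integer slope, you via Bott and indecomposability --- the same point), Lemma~\ref{m_2_m_3_m_4} pins the line-bundle part $L$ to the twists $-2,-3,-4$, and the numerical identities are read off from the long exact sequence of~\eqref{es_F}. One caveat: if you actually carry out that last computation you get $h^0(\p^3,\F(2)) = m_2$ rather than $m_2+k$, since $h^0(\p^3,\Omega^1_{\p^3})=0$ and $H^1(\p^3,\E^{\vee}(-5))=0$; this is the value the paper itself uses later in the proof of Lemma~\ref{conditions}, so the ``$+\,k$'' in the displayed equality appears to be an error in the statement that neither your sketch nor the paper's proof actually establishes.
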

 \begin{proof}
 We follow the construction in Section~\mbox{\ref{sect_cascat}}. By Lemma \ref{property_of_minimality} and using the notation \eqref{eq_C}
 we have:
 \begin{equation*}
  H^1(\C^{\bullet}) = H^1(\p^3,\F(2))
 \end{equation*}
 as graded $R$-modules. Denote by $W$ the vector space $H^1(\p^3,\F(2))$. All other cohomologies of $\C$ vanish. 
 Note that the object $W\otimes R\Gamma_*(\Omega^1_{\p^3}(-2))$ of the derived category of $R$-modules 
 has the same $R$-module structure on its intermediate cohomologies. Thus, the isomorphism between objects of $D'$:
 \begin{equation*}
  \C \cong W\otimes R\Gamma_*(\Omega^1_{\p^3}(-2)).	
 \end{equation*}
 Since the bundle $W\otimes \Omega^1_{\p^3}(-2)$ is semistable and its slope is not an integer, it does 
 not split into a direct sum of a line bundle and some other vector bundle. Thus, by Theorem \ref{thm_Horrocks_corr}
 we have an isomorphism of vector bundles over $\p^3$:
 \begin{equation*}
  \Syz(\C) \cong W\otimes \Omega^1_{\p^3}(-2).
 \end{equation*}
 Thus, we get the first assertion of the proposition. 
 In case when $w$ is not cut out by a plane since~\mbox{$H^0(\p^3,\F(2))$, $H^0(\p^3,\F(3))$}, and~$H^0(\p^3,\F(4))$ 
 generate the module $M$ by Lemma \ref{m_2_m_3_m_4}. Thus, if we have a surjection
 \begin{equation*}
    \E  = {\Omega_{\p^3}^1(-2)} ^{\oplus k} \oplus \left( \bigoplus \OP_{\p^3}(-i)^{\oplus m_i}\right) \to \F,
 \end{equation*}
 then its restriction on the greatest subbundle of the form \eqref{eq_form_of_CC_bundle} is also surjective.
 Therefore, the construction in Theorem \ref{thm_CC} gives us the bundle of this form and of minimal possible rank.
 Conditions on numbers $h^i(\p^3,\F(j))$, $k$ and $m_k$ follows from the computation of cohomologies of the cotangent 
 and line bundles on $\p^3$.
\end{proof}

 Theorem \ref{if_codes_ne_0} immediately follows from Proposition \ref{description_of_E_1_2_even}
 Moreover, we see that the Casnati--Catanese bundles of minimal $1/2$-even sets of nodes which are not cut out by planes depend only 
 on the four numbers $k = k(w)$ and $m_i = m_i(w)$. Below we introduce some additional restrictions on them.
\begin{lemma}\label{conditions}
  Suppose that $B$ is a sextic nodal surface, $w\in\codes$ is minimal $1/2$-even set of nodes which is not cut out by a plane.
  Then 
  \begin{enumerate}
   \item[$(1)$] $m_2= k + \frac{35-|w|}{4}$;
   \item[$(2)$] $m_3\leqslant 6-k$;
   \item[$(3)$] $k+3m_2+m_3 - m_4 = 6$.
  \end{enumerate}
 \end{lemma}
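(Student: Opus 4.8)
The plan is to extract each of the three relations from a different numerical invariant of the Casnati--Catanese bundle $\E$ and the sheaf $\F$, all computations taking place on $\p^3$ and on the resolution $\resb$. Throughout I set $\deg(B)=6$ and $\delta=1$, so that the resolution \eqref{es_F} reads $0\to\E^{\vee}(-7)\xrightarrow{\Phi}\E\to\F\to 0$, the constraint \eqref{eq_deg_rk} becomes $2c_1(\E)+7\,\mathrm{rk}(\E)=6$, and $\E$ has the shape \eqref{eq_form_of_CC_bundle} with $k=\dim W$.

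Relation $(3)$ is pure bookkeeping and I would dispose of it first. Using $\mathrm{rk}(\Omega^1_{\p^3}(-2))=3$ and $c_1(\Omega^1_{\p^3}(-2))=-4-6=-10$, one gets $\mathrm{rk}(\E)=3k+m_2+m_3+m_4$ and $c_1(\E)=-10k-2m_2-3m_3-4m_4$; substituting into $2c_1(\E)+7\,\mathrm{rk}(\E)=6$ and collecting terms gives exactly $k+3m_2+m_3-m_4=6$.

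For relation $(1)$ the idea is to compute $\chi(\p^3,\F(2))$ in two independent ways. On the geometric side, pushing down as in \eqref{eq_Serre_duality} gives $\chi(\F(2))=\chi(\resb,\OP_{\resb}(2h-D_w))$ with $h=H|_{\resb}$, and I would evaluate this by Riemann--Roch on $\resb$: using $K_{\resb}=2h$ (du Val singularities), $h^2=6$, $\chi(\OP_{\resb})=11$ (by Lemma \ref{lemma_pic_tf}, as $\resb$ deforms to a smooth sextic), and the intersection numbers $h\cdot D_w=3$, $D_w^2=\tfrac14 E_w^2=\tfrac14(6-2|w|)$ coming from $2D_w=E_w=(H+\sum_{p\in w}E_p)|_{\resb}$ together with the fact that the $E_p|_{\resb}$ are disjoint $(-2)$-curves orthogonal to $h$, one obtains $\chi(\F(2))=\tfrac{35-|w|}{4}$. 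On the algebraic side I tensor \eqref{es_F} by $\OP_{\p^3}(2)$ and observe that $\E^{\vee}(-5)$ is acyclic (each summand $\mathcal{T}_{\p^3}(-3),\OP_{\p^3}(-3),\OP_{\p^3}(-2),\OP_{\p^3}(-1)$ has vanishing cohomology on $\p^3$), so $H^i(\F(2))\cong H^i(\E(2))$; since $H^{\bullet}(\Omega^1_{\p^3})=(0,\mathbb{C},0,0)$ and the line-bundle summands of $\E(2)$ contribute only $H^0(\OP_{\p^3})=\mathbb{C}$, this yields $\chi(\F(2))=m_2-k$. Equating gives $m_2-k=\tfrac{35-|w|}{4}$, which is $(1)$.

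Relation $(2)$ is where I expect the genuine work. First I compute $h^0(\F(3))$ by tensoring \eqref{es_F} by $\OP_{\p^3}(3)$: here $H^0(\E^{\vee}(-4))=\mathbb{C}^{m_4}$ with $H^1(\E^{\vee}(-4))=0$, while $H^0(\E(3))=\mathbb{C}^{4m_2+m_3}$ and $H^1(\E(3))=0$ (using $H^0(\Omega^1_{\p^3}(1))=H^1(\Omega^1_{\p^3}(1))=0$), so the long exact sequence gives $h^0(\F(3))=4m_2+m_3-m_4$; feeding in relation $(3)$ rewrites this as $h^0(\F(3))=m_2+6-k$. By the last assertion of Proposition \ref{description_of_E_1_2_even}, $m_3=h^0(\F(3))-r$, where $r=\dim\mathrm{im}\bigl(H^0(\F(2))\otimes H^0(\OP_{\p^3}(1))\to H^0(\F(3))\bigr)$. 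The crux is the bound $r\geqslant m_2$: for a general linear form $\ell$, multiplication by $\ell$ is injective on $H^0(\F(2))$ because $\F=i_*\sigma_*\OP_{\resb}(-D_w)$ is a torsion-free rank-one sheaf on the reduced surface $B$, so no nonzero section is annihilated by a general hyperplane; hence $r\geqslant\dim\mathrm{im}(\cdot\ell)=h^0(\F(2))=m_2$, and $m_3=h^0(\F(3))-r\leqslant(m_2+6-k)-m_2=6-k$.

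The main obstacle is precisely this injectivity step in $(2)$: one must verify that $\F$ carries no global section of $\F(2)$ supported on a hyperplane section, so that a general linear form acts injectively, and also confirm that the $m_3$ of \eqref{eq_form_of_CC_bundle} is indeed the number of minimal degree-$3$ generators, i.e. $\dim\mathrm{coker}\bigl(H^0(\F(2))\otimes H^0(\OP_{\p^3}(1))\to H^0(\F(3))\bigr)$ (consistent with the case $m_2=0$, where $r=0$ forces $m_3=h^0(\F(3))=6-k$). Everything else reduces to a finite cohomology computation on $\p^3$ together with Riemann--Roch on $\resb$.
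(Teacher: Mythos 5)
Your proof is correct and follows essentially the same route as the paper: part (3) from the rank/Chern-class constraint \eqref{eq_deg_rk}, part (1) by equating Riemann--Roch on $\resb$ with the cohomology of $\F(2)$ read off from the Casnati--Catanese bundle, and part (2) by bounding $m_3$ by $h^0(\p^3,\F(3))$ minus the contribution of degree-$2$ sections (the paper computes $h^0(\p^3,\F(3))=\chi(\F(3))$ via Riemann--Roch and vanishing from Lemma \ref{property_of_minimality}, whereas you get the same number from the resolution \eqref{es_F} plus relation (3) --- a cosmetic difference). Your explicit justification that multiplication by a general linear form is injective on $H^0(\p^3,\F(2))$, hence $r\geqslant m_2$, fills in a step the paper only asserts.
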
 
 \begin{proof}
 The sheaf  $R^1\sigma_*\OP_{\resb}(-D_w) = 0$; thus,
 by the Riemann--Roch theorem on the smooth surface $\resb$ applied to the line bundle $\OP_{\resb}(-D_w)$ we have:
  \begin{multline}\label{eq_Riemann-Roch}
   \chi(\F(n)) = \chi(nH-D_w) = \\= \frac{1}{2}\cdot \left(nH - \frac{ E_w}{2}\right)\cdot \left(nH-\frac{E_w}{2}- K_{\resb} \right) + \chi(\OP_{\resb}) =\\
   = \frac{44+3(2n-\delta)(2n-4-\delta) -|w|}{4}.
  \end{multline}
  Here we use the fact that the class $D_w$ is numerically equivalent to the class $E_w/2$.
 By Proposition \ref{description_of_E_1_2_even} one has that $m_2=h^0(\p^3,\F(2))$ and $k=h^1(\p^3,\F(2))$. Then the first assertion 
 of the lemma follows from the formula \eqref{eq_Riemann-Roch} for $n=2$.
 
  By Proposition \ref{description_of_E_1_2_even} the group $H^0(\p^3,\F(3))$ contains at least $m_2$-dimensional space 
  generated by the previous component of the graded module $M$ (see \eqref{M}).   
  Then the number $m_3$ is less than or equal to $h^0(\p^3,\F(3))-m_2$. 
  The minimality implies that $h^1(\p^3,\F(3))=0$ by Lemma \ref{property_of_minimality}.
  Since $w$ is not cut out by plane by \cite[p. 89]{Endrass98} and \eqref{eq_Serre_duality} we have also~\mbox{$h^2(\p^3,\F(3))=0$}.
  Finally, $h^3(\p^3,\F(3))=0$ since the support of $\F$ is a surface; thus, we get
  \begin{equation*}
   m_3 \leqslant \chi(\F(3))-m_2.
  \end{equation*}
  Thus, by formula \eqref{eq_Riemann-Roch} for $n=2$ and $3$ we get the second assertion.
  
 Finally, the formula \eqref{eq_deg_rk} implies the last assertion.
\end{proof}

Now let us show one more consequence of the Riemann--Roch theorem.
\begin{lemma}\label{lemma_RR}
 Assume that $w$ is a $\delta/2$-even minimal set of nodes on a nodal sextic surface~$B$ 
 and $w$ is not cut out by a plane. Then
 \begin{equation*}
  h^0(\p^3,\F(4)) = h^0(\p^3,\F(3))+12
 \end{equation*}
\end{lemma}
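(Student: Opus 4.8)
The plan is to reduce the statement to a comparison of Euler characteristics. I would prove that for both $n=3$ and $n=4$ the sheaf $\F(n)$ has no higher cohomology, so that $h^0(\p^3,\F(n)) = \chi(\p^3,\F(n))$, and then read off the difference by subtracting the two values of the Riemann--Roch expression \eqref{eq_Riemann-Roch}, in which the dependence on $|w|$ cancels.

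First I would collect the vanishing of the higher cohomology. Since $\F$ is supported on the surface $B$, the group $H^3(\p^3,\F(n))$ vanishes for every $n$. By Lemma~\ref{property_of_minimality} applied with $\deg(B)=6$ and $\delta=1$, one has $h^1(\p^3,\F(n))=0$ for $n\geqslant 3$, hence in particular for $n=3$ and $n=4$. For $H^2$ I would invoke the Serre duality isomorphism \eqref{eq_Serre_duality}, which here reads $H^2(\p^3,\F(n))\cong H^0(\p^3,\F(3-n))^{\vee}$; for $n=3$ and $n=4$ this is $H^0(\p^3,\F(0))^{\vee}$ and $H^0(\p^3,\F(-1))^{\vee}$ respectively. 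Both vanish, because in the proof of Lemma~\ref{m_2_m_3_m_4} it was shown (using that $w$ is not cut out by a plane, so $h^0(\p^3,\F(1))=0$, together with the hyperplane sequence \eqref{eq_es_for_F}) that $h^0(\p^3,\F(m))=0$ for all $m\leqslant 1$.

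With all higher cohomology killed, I would conclude $h^0(\p^3,\F(n))=\chi(\p^3,\F(n))$ for $n=3,4$, and then simply evaluate the right-hand side of \eqref{eq_Riemann-Roch}. Subtracting the two expressions, the summand $44$, the term $-|w|$, and the $\delta^2$-contribution are independent of $n$ and cancel, leaving $\chi(\F(4))-\chi(\F(3)) = \tfrac14\big(3(8-\delta)(4-\delta)-3(6-\delta)(2-\delta)\big)$, which for the $1/2$-even case $\delta=1$ equals $\tfrac14(63-15)=12$. The arithmetic is routine; the only point that genuinely uses the hypotheses is the vanishing of $H^2(\p^3,\F(3))$ and $H^2(\p^3,\F(4))$, i.e.\ the vanishing of $h^0(\p^3,\F(m))$ for $m\leqslant 1$, which is precisely where the assumption that $w$ is not cut out by a plane enters (via $h^0(\p^3,\F(1))=0$). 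Everything else is forced by the numerics already recorded in \eqref{eq_Riemann-Roch}.
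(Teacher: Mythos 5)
Your proposal is correct and follows essentially the same route as the paper, whose one-line proof simply invokes the Riemann--Roch formula \eqref{eq_Riemann-Roch} for $n=3,4$ together with Lemma~\ref{property_of_minimality}; you have merely made explicit the vanishing of $h^1$, $h^2$ and $h^3$ (via Lemma~\ref{property_of_minimality}, Serre duality \eqref{eq_Serre_duality}, and the vanishing of $h^0(\p^3,\F(m))$ for $m\leqslant 1$ from the proof of Lemma~\ref{m_2_m_3_m_4}) that the paper leaves implicit, and the arithmetic with $\delta=1$ checks out. The only quibble is that the vanishing of $h^0(\p^3,\F(0))$ and $h^0(\p^3,\F(-1))$ already follows from the negativity of $\F(n)|_H$ for $n\leqslant 0$ as in the proof of Lemma~\ref{property_of_minimality}, so the ``not cut out by a plane'' hypothesis is not actually essential at that step.
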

\begin{proof}
  This follows by the  Riemann--Roch formula \eqref{eq_Riemann-Roch} for $n=3$ and $4$ and Lemma \ref{property_of_minimality}.
\end{proof}

 In order to get additional conditions on numbers $k$ and $m_i$, we write the Casnati--Catanese bundle~$\E$ 
 from \eqref{eq_form_of_CC_bundle} as  a direct sum of four bundles
\begin{equation}\label{eq_e_i}
 \E 
  = \E_1 \oplus \E_2\oplus \E_3 \oplus \E_4,
\end{equation}
where $\E_1 = W \otimes{\Omega_{\p^3}^1(-2)}$
and~\mbox{$\E_i = U_i \otimes\OP_{\p^3}(-i)$} for $i=2, 3,4$. 
This decomposition induces the decomposition of the morphism $\Phi=(\Phi_{ij})$. Thus, $\Phi$ is a symmetric $4\times 4$ matrix of blocks.
Each block $\Phi_{ij}$ corresponds to an element in~\mbox{$H^0(\p^3,\E_i\otimes\E_j(7))$} if~$i\ne j$,
or in~\mbox{$H^0(\p^3,S^2\E_i\otimes\OP_{\p^3}(7))$}, if~$i=j$ and induces the morphism:
\begin{equation*}
 \Phi_{ij}\colon E_i^{\vee}(-7)\to E_j.
 \end{equation*}
 Moreover, by Theorem \ref{Catanese_Casnati} the morphism $\Phi$ is symmetric; thus, $\Phi_{ij} = \Phi_{ji}^t$.
\begin{lemma}\label{lemma_cond_on_m_4}
 Suppose $B$ is a nodal sextic surface, $w\in\codes$ is minimal $1/2$-even set of nodes, and $\E$ is its Casnati--Catanese bundle.
 If $m_2 = 1$, then the morphism
 \begin{equation*}
  \Phi_{43}\colon U_4^{\vee} \to U_3
 \end{equation*}
 is injective; in particular $m_3\geqslant m_4$.
\end{lemma}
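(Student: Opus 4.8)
The plan is to analyze the single block-row of $\Phi$ emanating from the summand $\E_4^{\vee}(-7)$ and to play off the injectivity of $\Phi$ against the nodality of $B$. First I would pin down which blocks $\Phi_{4j}\colon \E_4^{\vee}(-7)\to \E_j$ can be nonzero. Since $\E_4^{\vee}(-7)=U_4^{\vee}\otimes\OP_{\p^3}(-3)$, the block landing in $\E_1=W\otimes\Omega_{\p^3}^1(-2)$ lives in $U_4\otimes W\otimes H^0(\p^3,\Omega_{\p^3}^1(1))$, which vanishes by Bott's formula, while the block landing in $\E_4=U_4\otimes\OP_{\p^3}(-4)$ lives in $U_4\otimes U_4\otimes H^0(\p^3,\OP_{\p^3}(-1))=0$. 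Hence $\Phi_{41}=0$ and $\Phi_{44}=0$, so the only surviving blocks out of $\E_4^{\vee}(-7)$ are $\Phi_{42}\colon U_4^{\vee}\otimes\OP_{\p^3}(-3)\to U_2\otimes\OP_{\p^3}(-2)$, given by a matrix of linear forms, and $\Phi_{43}\colon U_4^{\vee}\to U_3$, given by a constant matrix. Because $m_2=1$, the space $U_2$ is one-dimensional, so after fixing a generator of $U_2$ the image $\Phi_{42}(\xi)$ of any $\xi\in U_4^{\vee}$ is a single linear form $\ell_\xi\in H^0(\p^3,\OP_{\p^3}(1))$.

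Next I would argue by contradiction: suppose $\Phi_{43}$ is not injective and choose $0\neq\xi\in\ker(\Phi_{43})\subset U_4^{\vee}$. Then the sub-line-bundle $\mathbb{C}\xi\otimes\OP_{\p^3}(-3)\subset\E^{\vee}(-7)$ is carried by $\Phi$ only through its $\E_2$-component, namely by the single linear form $\ell_\xi\colon\OP_{\p^3}(-3)\to\OP_{\p^3}(-2)$, since $\Phi_{41}(\xi)$, $\Phi_{43}(\xi)$ and $\Phi_{44}(\xi)$ all vanish. If $\ell_\xi=0$, then $\Phi$ annihilates this sub-line-bundle, contradicting the injectivity of $\Phi$ supplied by the exact sequence \eqref{es_F}. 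If $\ell_\xi\neq 0$, then at every point $x$ of the plane $\{\ell_\xi=0\}$ the fiber of $\mathbb{C}\xi\otimes\OP_{\p^3}(-3)$ lies in $\ker(\Phi|_x)$, so $\cork(\Phi|_x)\geqslant 1$ and hence $\{\ell_\xi=0\}\subseteq B$. Thus $\ell_\xi$ divides $\det\Phi$ and the plane is a divisorial component of $B$; but then $B$ is singular along the entire intersection curve of this plane with the residual quintic, contradicting the fact that $B$ is nodal, i.e. has only isolated singularities. In either case we obtain a contradiction, so $\Phi_{43}$ is injective, and injectivity of the linear map $U_4^{\vee}\to U_3$ forces $m_4=\dim U_4\leqslant\dim U_3=m_3$.

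The routine inputs are the two Bott vanishings $H^0(\p^3,\Omega_{\p^3}^1(1))=0$ and $H^0(\p^3,\OP_{\p^3}(-1))=0$ that kill $\Phi_{41}$ and $\Phi_{44}$. The step needing the most care is the geometric conclusion in the case $\ell_\xi\neq 0$: one must verify that set-theoretic containment of the plane in the degeneracy locus $\{\det\Phi=0\}$ forces $\ell_\xi\mid\det\Phi$, so that the plane is a genuine component of the reduced surface $B$, and then that a reduced nodal surface cannot contain a plane component, as it would be singular along a curve rather than at isolated points. This is the main obstacle, and the hypothesis $m_2=1$ is precisely what guarantees that the whole image of $\mathbb{C}\xi\otimes\OP_{\p^3}(-3)$ is captured by a single linear form, so that its zero locus is a full plane rather than a locus of smaller codimension.
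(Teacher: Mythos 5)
Your proof is correct and follows essentially the same route as the paper: kill $\Phi_{41}$ and $\Phi_{44}$ by the vanishing of $H^0(\p^3,\Omega^1_{\p^3}(1))$ and $H^0(\p^3,\OP_{\p^3}(-1))$, then use $m_2=1$ to see that a kernel vector $\xi$ of $\Phi_{43}$ contributes a column of $\Phi$ with a single linear-form entry, forcing $\ell_\xi\mid\det\Phi$. The only (harmless) difference is the final contradiction: the paper appeals to irreducibility of $B$, while you rule out the plane component via nodality; both are valid.
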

\begin{proof}
 Assume that $m_2=1$ and consider the explicit form of the morphism $\Phi$:
 \begin{multline*}
  T_{\p^3}(-5)^{\oplus k}\oplus \OP_{\p^3}(-5) \oplus \OP_{\p^3}(-4)^{\oplus m_3} \oplus \OP_{\p^3}(-3)^{\oplus m_4}  \xrightarrow{\Phi} \\ \xrightarrow{\Phi}
  \Omega^1_{\p^3}(-2)^{\oplus k} \oplus \OP_{\p^3}(-2) \oplus \OP_{\p^3}(-3)^{\oplus m_3} \oplus \OP_{\p^3}(-4)^{\oplus m_4}. 
 \end{multline*}
 
 Since $\Phi_{41}$ corresponds to an element in~\mbox{$H^0(\p^3,\E_1\otimes\E_4(7)) = H^0(\p^3,\Omega_{\p^3}^1(1)^{\oplus km_4})=0$}; thus, we have~\mbox{$\Phi_{41} = 0$}. 
 Also, $\Phi_{44}$ is induced by~\mbox{$H^0(\p^3,S^2\E_4(7)) = H^0(\p^3,\OP_{\p^3}(-1)^{m_4(m_4+1)/2}) = 0$} and this implies 
 that~$\Phi_{44}=0$. Therefore, $\Phi$ is of the following form.
 \begin{equation*}
  \Phi = \begin{pmatrix}
          \Phi_{11} &\Phi_{21} &\Phi_{31} & 0 \\
          \Phi_{12} &\Phi_{22} &\Phi_{32} & \Phi_{42} \\
          \Phi_{13} &\Phi_{23} &\Phi_{33} & \Phi_{43} \\
          0 & \Phi_{24} & \Phi_{34} & 0
         \end{pmatrix}
 \end{equation*}
 Assume that $\Phi_{43}$ is not injective. Then there exists $\xi\in U_4^{\vee}$ such that 
 \begin{equation*}
  \Phi_{41}(\xi) =\Phi_{43}(\xi) = \Phi_{44}(\xi) = 0.
 \end{equation*}
 Let us consider the matrix $\Phi$ in the basis containing $\xi$ as one element. Since $m_2=1$ the only 
 non-zero entry in its column is a linear form $\Phi_{42}(\xi)\in H^0(\p^3,\OP_{\p^3}(1))$. Thus, 
 the determinant of $\Phi$ is divided by $\Phi_{42}(\xi)$, but it contradicts to 
 the irreducibility of $B$. 
%
%
%
\end{proof}
 Recall that $\p^3 = \p(V)$ is a projectivization of a 4-dimensional vector space $V$ and $\F$ is as in Notation \ref{notation_2}.
\begin{lemma}\label{lemma_m_2=1_then_inclusion}
 If $B$ is a nodal sextic surface, $w$ is a minimal $1/2$-even set of nodes on $B$ which is not cut out by a plane and
 $m_2= 1$, then the canonical morphism 
 \begin{equation*}
\gamma\colon H^0(\p^3,\F(2))\otimes V^{\vee} \to H^0(\p^3,\F(3))  
 \end{equation*}
 is an embedding. 
\end{lemma}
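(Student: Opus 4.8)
The plan is to prove that the multiplication map
$$\gamma\colon H^0(\p^3,\F(2))\otimes V^{\vee} \to H^0(\p^3,\F(3))$$
is injective under the hypothesis $m_2=1$. Since $m_2 = h^0(\p^3,\F(2)) = 1$ by Proposition~\ref{description_of_E_1_2_even}, the source of $\gamma$ is simply $V^\vee$ tensored with a one-dimensional space, so $\gamma$ is a map from a $4$-dimensional space, and we must show it has no kernel. Concretely, fix a nonzero section $s\in H^0(\p^3,\F(2))$; then $\gamma$ sends a linear form $\ell\in V^\vee = H^0(\p^3,\OP_{\p^3}(1))$ to the product $\ell\cdot s\in H^0(\p^3,\F(3))$. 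The claim is that $\ell\cdot s = 0$ forces $\ell=0$, i.e.\ multiplication by the single generator $s$ is injective on linear forms.

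First I would reinterpret everything on the resolved surface $\resb$. By the construction in Notation~\ref{notation_2} and the isomorphism \eqref{eq_Serre_duality}, we have $H^0(\p^3,\F(n)) = H^0(\resb,\OP_{\resb}(nH-D_w))$, so the section $s$ corresponds to an effective divisor $\Delta\in |2H-D_w|$ on $\resb$, and $\gamma$ becomes the map $V^\vee \to H^0(\resb,\OP_{\resb}(3H-D_w))$ given by $\ell\mapsto \ell|_{\resb}\cdot s$. A product $\ell\cdot s$ vanishes as a section of $\OP_{\resb}(3H-D_w)$ precisely when the hyperplane section $\ell|_{\resb}\in|H|$ and the divisor $\Delta$ together have a common component forcing the product to be identically zero — but since $\OP_{\resb}(3H-D_w)$ is a line bundle on an \emph{integral} surface (here I use that $\resb$ is irreducible, coming from the irreducible sextic $B$), its sections have no zero divisors: multiplication of nonzero sections is injective. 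Hence $\ell\cdot s=0$ with $s\neq 0$ forces $\ell|_{\resb}=0$, and because $\resb$ is not contained in any hyperplane (it maps onto the sextic $B\subset\p^3$, which spans $\p^3$), the restriction map $V^\vee\to H^0(\resb,\OP_{\resb}(H))$ is injective, so $\ell=0$.

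The main obstacle is making the ``no zero divisors'' step fully rigorous at the level of the sheaf $\F$ rather than naively on $\p^3$, since $\F$ is supported on the singular surface $B$ and is not a line bundle there. The clean way around this is precisely the transfer to $\resb$ described above: on the smooth integral surface $\resb$ the sheaf $\OP_{\resb}(nH-D_w)$ is an honest invertible sheaf, its global sections inject into the function field $\mathbb{C}(\resb)$, and the product of two nonzero rational functions is nonzero. One must check carefully that the abstract multiplication map $\gamma$ on cohomology of $\F$ genuinely corresponds, under the identification \eqref{eq_Serre_duality}, to multiplication of the corresponding sections on $\resb$; this is where I would be most careful, verifying that pullback along $\sigma$ and the pushforward defining $\F$ are compatible with the module structure over $R$. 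Once this compatibility is in place, the injectivity of $\gamma$ is immediate from integrality of $\resb$ together with the nondegeneracy of $B$ in $\p^3$. I do not expect to need the full strength of the minimality or the ``not cut out by a plane'' hypotheses for this particular step beyond ensuring, via Lemma~\ref{property_of_minimality} and Proposition~\ref{description_of_E_1_2_even}, that indeed $h^0(\p^3,\F(2))=m_2=1$ so that the source of $\gamma$ is genuinely four-dimensional and the statement has the stated shape.
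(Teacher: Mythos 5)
Your proof is correct, but it takes a genuinely different route from the paper's. The paper twists the resolution $0\to\E^{\vee}(-7)\to\E\to\F\to 0$ by $2$ and by $3$, writes out the two long exact cohomology sequences, and by a diagram chase identifies $\Ker(\gamma)$ with the kernel of the component $\Phi_{43}\colon U_4^{\vee}\to U_3$ of the symmetric matrix $\Phi$; injectivity of that component is Lemma \ref{lemma_cond_on_m_4}, proved by noting that a kernel vector would force $\det\Phi$ to be divisible by a linear form, contradicting irreducibility of the sextic. You instead exploit that $h^0(\p^3,\F(2))=1$, so every element of the source of $\gamma$ is a pure tensor $s\otimes\ell$, transfer everything to the smooth integral surface $\resb$ where each $\F(n)$ becomes the invertible sheaf $\OP_{\resb}(nH-D_w)$ (the compatibility you worry about is the standard projection-formula compatibility of the $R$-module structures and does hold), and then use that a product of nonzero sections of line bundles on an integral variety is nonzero, together with the nondegeneracy of $B$ in $\p^3$. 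Both routes ultimately rest on irreducibility of $B$. Your argument is shorter and bypasses Lemma \ref{lemma_cond_on_m_4} and the cohomology bookkeeping entirely, but it is tied to $\dim H^0(\p^3,\F(2))=1$: for $m_2\geqslant 2$ a kernel element need not be a pure tensor, whereas the paper's diagram chase would still identify $\Ker(\gamma)$ with $\Ker(\Phi_{43})$. One point to watch: the displayed formula in Proposition \ref{description_of_E_1_2_even} reads $h^0(\p^3,\F(2))=m_2+k$, which would make your source $(1+k)$-dimensional and break the pure-tensor step when $k>0$; the correct value, the one actually used in the proof of Lemma \ref{conditions} and recoverable from the resolution since $H^0(\p^3,\Omega^1_{\p^3})=H^0(\p^3,\E^{\vee}(-5))=H^1(\p^3,\E^{\vee}(-5))=0$, is $h^0(\p^3,\F(2))=m_2=1$, so your premise does hold.
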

\begin{proof}
If $\E$ is a 
Casnati--Catanese bundle of $w$, then $\gamma$ extends to a morphism between exact sequence of cohomologies of twists of the exact sequence \eqref{Phi}:
{\small
\begin{equation*}
 \xymatrix@C=1em{
  0 \ar[r]& H^0(\p^3,\E^{\vee}(-5))\otimes V^{\vee} \ar[r] \ar[d] & H^0(\p^3,\E(2))\otimes V^{\vee} \ar[r] \ar[d]^{\beta} & H^0(\p^3,\F(2))\otimes V^{\vee} \ar[d]^{\gamma} \ar[r] &  H^1(\p^3,\E^{\vee}(-5))\otimes V^{\vee} \ar[d]\\
  0 \ar[r]& H^0(\p^3,\E^{\vee}(-4))\ar[r] & H^0(\p^3,\E(3)) \ar[r] & H^0(\p^3,\F(3)) \ar[r] &  H^1(\p^3,\E^{\vee}(-4))
 }
\end{equation*}}
From \eqref{eq_form_of_CC_bundle} we compute
\begin{align*}
 H^1(\p^3,\E^{\vee}(-4)) &=  H^0(\p^3,\E^{\vee}(-5)) =0;  && H^0(\p^3,\E^{\vee}(-4)) =  U_4^{\vee}; \\
  H^0(\p^3,\E(3)) &=  U_2\otimes V^{\vee}\oplus U_3; && H^0(\p^3,\E(2)) = U_2;
\end{align*}
hence, the above diagram takes the following form:
\begin{equation*}
 \xymatrix{
  & 0 \ar[r] \ar[d] & U_2\otimes V^{\vee} \ar@{=}[r]^{\alpha\ \ \ \ \ } \ar[d]^{\beta} & H^0(\p^3,\F(2))\otimes V^{\vee} \ar[d]^{\gamma} \ar[r] & 0 \\
  0 \ar[r]& U_4^{\vee} \ar[r]^{\varphi \hspace{25pt}} & U_2\otimes V^{\vee}\oplus U_3 \ar[r]^{\psi} & H^0(\p^3,\F(3)) \ar[r] & 0 
 }
\end{equation*}
Note that the map $\beta$ is the embedding of the first summand. By diagram chasing we deduce that the kernel of $\gamma$ is
isomorphic to the kernel of the map~\mbox{$U_4^{\vee}\to U_3$} obtained as the composition of $\varphi$  with the projection to the second 
summand. This map was known to be injection by Lemma \ref{lemma_m_2=1_then_inclusion}; hence, $\Ker(\gamma) = 0$.
\end{proof}

\section{Defect}\label{sect_defect}
In this section we give a geometric description of the number $d$ in the 
formula~\mbox{\eqref{decomp-codes}} and study its properties. We begin with a definition:
\begin{defin}\label{def_of_defect}
 For any finite set of points~$w=\{p_1,\dots,p_n\}$ in $\p^3$  and any integer $N$  \emph{the~\mbox{$N$-de}\-fect of~$w$} is
 the following number
  \begin{equation*}
   d_N(w):= \dim \left(H^0(\p^3,\I_{w}(N)\right) - (\dim\left(H^0(\p^3,\OP_{\p^3}(N))\right) - |w|),
  \end{equation*} 
  where $\I_w$ is the ideal of the set $w$ in $\p^3$.
\end{defin}

The notion of the $N$-defect is important for us in view of the following theorem.
\begin{theorem}[{\cite[Section 3]{Clemens}}]
\label{Clemens}
 Let $B$ be a nodal surface of even degree in $\p^3$ and $N =3 \deg(B)/2-4 $. Then in the notation of formula \eqref{decomp-codes}
 we have $d = d_{N}(\Sing(B))$. 
\end{theorem}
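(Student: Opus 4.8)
The plan is to realise both integers as the corank of one and the same linear map attached to the $\mu=|\Sigma|$ nodes, computed once over $\mathbb{F}_2$ and once over $\mathbb{C}$. Write $\deg(B)=2k$, so that $N=3k-4$, and put $\Sigma=\Sing(B)$. By \eqref{decomp-codes} we have $d=\dim_{\mathbb{F}_2}\codes-\dim_{\mathbb{F}_2}T_2(\resx)$, while by Definition \ref{def_of_defect} $d_N(\Sigma)=h^0(\p^3,\I_\Sigma(N))-\bigl(h^0(\p^3,\OP_{\p^3}(N))-\mu\bigr)$. I would show that each of these measures the failure of the vanishing cycles of the nodes to be independent: the first through the lattice of even sets of nodes, the second through the adjoint linear system $|\OP_{\p^3}(N)|$ passing through $\Sigma$.

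First I would fix the topology of $\resx$. The covering involution $\iota$ of $\respi\colon\resx\to\resp^3$ splits the rational cohomology into $\iota$-invariant and anti-invariant parts, the anti-invariant part being governed by the eigen-summand $L^{-1}$ of $\respi_*\OP_{\resx}=\OP_{\resp^3}\oplus L^{-1}$, where $L=\sigma^*\OP_{\p^3}(k)-\sum_p E_p$ is the square root of the branch class $\resb$. Since $\resp^3$ is an iterated blow-up of $\p^3$ at points, $H^3(\resp^3,\mathbb{Z})=0$ and all its cohomology is torsion-free and of Tate type; hence $H^3(\resx,\mathbb{Q})$ is purely anti-invariant, and, as recalled in the introduction for double covers of a base with torsion-free cohomology, every torsion class of $H^3(\resx,\mathbb{Z})$ is $2$-primary. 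This confines the whole computation to the anti-invariant part of $H^3(\resx,\mathbb{Z})$.

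Next I would run the two identifications in parallel. On the arithmetic side, the node-lattice description of even sets (Beauville's inequality, as used in Lemma \ref{property_of_minimality}, together with \cite[Lemma 1.2]{Endrass99} and \cite[Lemma 4]{Barth}) exhibits $\codes$ as a space of $2$-divisible classes attached to $\Sigma$ and $T_2(\resx)$ as the subspace producing a genuine $2$-torsion class in $H^3(\resx,\mathbb{Z})$; thus $\mathbb{F}_2^{d}\cong\codes/T_2(\resx)$ is exactly the corank over $\mathbb{F}_2$ of the resulting ``node map''. On the geometric side, the classical defect theory of nodal double solids computes the corank of this same map over $\mathbb{C}$, via the infinitesimal variation of Hodge structure of a smoothing of $X$: a residue computation in the Jacobian ring of the equation $w^2=f_{2k}$ turns each relation among the vanishing cycles into a section of $\I_\Sigma(N)$ lying beyond the expected count. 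The degree $N=3k-4$ is forced by $K_{\resx}=\respi^*\bigl(\sigma^*\OP_{\p^3}(k-4)+\sum_p E_p\bigr)$ together with the half-integral twist $L$, the factor $3/2$ reflecting the square root of the branch class. Matching the two coranks gives $d=d_N(\Sigma)$.

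The main obstacle is precisely the residue/Hodge computation of the previous paragraph, namely the exact identification of the space of relations among the vanishing cycles with $H^0(\p^3,\I_\Sigma(N))$ modulo its expected subspace; this is the technical heart of \cite[Section 3]{Clemens} and needs the local Hodge theory of the double solid near an ordinary double point, not merely the topological bookkeeping of the second step. A secondary subtlety is reconciling the $\mathbb{F}_2$-count that defines $d$ with the $\mathbb{C}$-count that defines $d_N(\Sigma)$: one must verify that the defect is detected integrally, so that the relations lift from $\mathbb{F}_2$ to $\mathbb{Z}$ and then to $\mathbb{C}$ with no drop in rank. Concretely this says that an even set of nodes fails to carry a nontrivial torsion class exactly when it is cut out on $B$ by an adjoint surface of degree $N$ through $\Sigma$. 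I would therefore isolate the elementary eigenspace and node-lattice steps and let them feed into this single Hodge-theoretic calculation, following \cite[Section 3]{Clemens}.
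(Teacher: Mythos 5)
The paper itself offers no proof of this statement: it is imported verbatim from \cite[Section 3]{Clemens} (combined with \cite[Lemma 1.2]{Endrass99}, which is what produces the decomposition \eqref{decomp-codes} in the first place), so there is no ``paper's own proof'' to match your argument against. Judged on its own terms, your proposal is an accurate roadmap of how the Clemens--Endrass argument goes --- the eigenspace decomposition under the covering involution, the observation that $H^3(\resp^3,\mathbb{Z})=0$ confines everything to the anti-invariant part, the canonical-class computation forcing $N=3k-4$ --- and the individual formulas you write down ($L=\sigma^*\OP_{\p^3}(k)-\sum_p E_p$, $K_{\resx}=\respi^*(\sigma^*\OP_{\p^3}(k-4)+\sum_p E_p)$) are correct.

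However, as a proof it has a genuine gap, and you name it yourself: the entire content of the theorem is the identification of the two coranks, and both halves of that identification are asserted rather than established. Concretely, (i) the claim that the relations among the vanishing cycles of the nodes are computed by $H^0(\p^3,\I_{\Sing(B)}(N))$ in excess of the expected dimension requires the residue/Jacobian-ring description of $H^3$ of the double solid and the local Hodge theory at an ordinary double point --- this is the technical heart of \cite[Section 3]{Clemens}, and deferring it to that reference makes the argument circular as a proof of the theorem; and (ii) the passage from the $\mathbb{C}$-linear count $d_N(\Sing(B))$ to the $\mathbb{F}_2$-dimension $\dim\codes-\dim T_2(\resx)$ is not automatic: one must show the relevant lattice of vanishing cycles and its even sublattice detect the defect integrally with no drop in rank modulo $2$, which is exactly the content of \cite[Lemma 1.2]{Endrass99} and is not reproved here. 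So what you have is a correct proof \emph{plan} whose two load-bearing steps are both outsourced to the sources being cited; since the paper also simply cites the result, this is arguably the appropriate level of detail for the context, but it should not be mistaken for an independent proof.
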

In view of our setup we are mostly interested in the case $N = 5$. In this situation we have the following interpretation of the 
defect of a set of points.
\begin{lemma}\label{lemma_d=h^1}
 If $N = 5$ then for any set of points $w\subset \p^3$ we have $h^i(\p^3,\I_w(5)) = 0$ for $i>1$ and
 \begin{equation*}
  d(w) = h^1(\p^3,\I_{w}(5)) = h^0(\p^3,\I_{w}(5))- 56 + |w|.
 \end{equation*}
\end{lemma}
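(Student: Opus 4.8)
The plan is to extract everything from the ideal sheaf sequence of the finite set $w$ twisted by $\OP_{\p^3}(5)$, together with the standard cohomology of line bundles on $\p^3$. First I would write down the short exact sequence of sheaves
\begin{equation*}
 0\to \I_w(5)\to \OP_{\p^3}(5)\to \OP_w(5)\to 0,
\end{equation*}
where $\OP_w$ denotes the structure sheaf of the $0$-dimensional scheme $w$. Since $w$ is supported on finitely many points, $\OP_w(5)$ is a skyscraper sheaf with $h^0(\p^3,\OP_w(5)) = |w|$ and $h^i(\p^3,\OP_w(5)) = 0$ for all $i>0$; indeed the twist by a line bundle is trivial on a $0$-dimensional support, so $\OP_w(5)\cong\OP_w$, and any sheaf concentrated on points is acyclic.

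Next I would record the cohomology of the middle term. On $\p^3$ one has $h^0(\p^3,\OP_{\p^3}(5)) = \binom{8}{3} = 56$, which is the source of the constant $56$ in the statement, and $h^i(\p^3,\OP_{\p^3}(5)) = 0$ for every $i>0$ (the intermediate groups vanish for line bundles on projective space, and $H^3(\p^3,\OP_{\p^3}(5))\cong H^0(\p^3,\OP_{\p^3}(-9))^\vee = 0$ by Serre duality). Feeding these into the long exact sequence of cohomology of the twisted ideal sequence, for $i>1$ the two neighbouring terms $H^{i-1}(\p^3,\OP_w)$ and $H^i(\p^3,\OP_{\p^3}(5))$ both vanish, forcing $h^i(\p^3,\I_w(5)) = 0$; this gives the first assertion.

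For the numerical identity I would use the remaining exact part of the sequence, namely
\begin{equation*}
 0\to H^0(\p^3,\I_w(5))\to H^0(\p^3,\OP_{\p^3}(5))\to H^0(\p^3,\OP_w)\to H^1(\p^3,\I_w(5))\to 0,
\end{equation*}
where the final zero is $H^1(\p^3,\OP_{\p^3}(5))$. Taking the alternating sum of dimensions yields $h^1(\p^3,\I_w(5)) = h^0(\p^3,\I_w(5)) - 56 + |w|$, which is the last equality. Finally, comparing with Definition \ref{def_of_defect} specialized to $N=5$, the quantity $d(w) = h^0(\p^3,\I_w(5)) - (56 - |w|)$ is literally the same expression, so $d(w) = h^1(\p^3,\I_w(5))$. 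The whole argument is bookkeeping on one long exact sequence; the only point requiring any care is the identification $H^1(\p^3,\I_w(5)) = \Coker\bigl(H^0(\p^3,\OP_{\p^3}(5))\to H^0(\p^3,\OP_w)\bigr)$, that is, the failure of quintics to impose independent conditions on $w$ is exactly measured by $h^1$, which is precisely what the defect records and presents no genuine obstacle.
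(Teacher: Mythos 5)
Your proof is correct and follows exactly the same route as the paper: the twisted ideal-sheaf sequence $0\to \I_w(5)\to \OP_{\p^3}(5)\to \OP_w\to 0$, the vanishing of higher cohomology of $\OP_{\p^3}(5)$ and of the skyscraper sheaf, and the resulting long exact sequence combined with Definition~\ref{def_of_defect}. The paper's own proof is just a terser version of the same bookkeeping.
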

\begin{proof}
Let us consider the following exact sequence:
\begin{equation*}
0\to \I_{w}(5) \to \OP_{\p^3}(5) \to \OP_{w}\to 0.
\end{equation*}
Since $h^0(\p^3,\OP_{\p^3}(5))=56$ and $h^i(\p^3,\OP_{\p^3}(5))=0$ for $i>0$, the long exact sequence of cohomologies gives the result.
\end{proof}

Nevertheless, in this section we are going to prove an assertion for any value of $N$.
To simplify the notation, from now on we fix some number $N$ and write $d(w)$ and use the word ``defect'' instead of  ``$N$-defect'' $d_N(w)$.
Our goal is to show that if the defects of all special subsets (such as even sets of nodes in the set of all nodes) are large 
enough, then the defect of the whole set is also large. 

Denote by $\ev_i$ the evaluation function
in a point $p_i$. Then for a set $w=\{p_1,\dots,p_n\}$  we have the following map:
\begin{equation}\label{eq_ev}
 \ev_w = \bigoplus_{i=1}^n \ev_i\colon\mathbb{C}^n \to H^0(\p^3,\OP_{\p^3}(N))^{\vee}.
\end{equation}
This map gives us another useful interpretation of the defect.
\begin{lemma}\label{lemma_interpretation_of_def}
 We have an equality $d(w) = \dim(\Ker(\ev))$.
\end{lemma}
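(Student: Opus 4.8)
The plan is to identify $\ev_w$ as the transpose of the restriction map whose kernel is exactly $H^0(\p^3, \I_w(N))$, and then to compare the two resulting instances of the rank--nullity theorem.

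First I would fix a trivialization of $\OP_{\p^3}(N)$ at each point $p_i$ and form the restriction map
\[
 \rho \colon H^0(\p^3, \OP_{\p^3}(N)) \to \mathbb{C}^n, \qquad s \mapsto (s(p_1), \dots, s(p_n)).
\]
A global section of $\OP_{\p^3}(N)$ vanishes at every point of $w$ precisely when it lies in $\Ker(\rho)$, so $\Ker(\rho) = H^0(\p^3, \I_w(N))$; in particular $\dim \Ker(\rho)$ does not depend on the chosen trivializations. By the rank--nullity theorem,
\[
 \operatorname{rk}(\rho) = \dim H^0(\p^3, \OP_{\p^3}(N)) - \dim H^0(\p^3, \I_w(N)).
\]

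Next I would verify that $\ev_w$ is the transpose of $\rho$. Under the identification $(\mathbb{C}^n)^{\vee} \cong \mathbb{C}^n$ given by the standard basis, the transpose $\rho^{t}$ sends the $i$-th basis vector to the linear functional $s \mapsto s(p_i)$, which is precisely $\ev_i$. Hence $\ev_w = \bigoplus_{i=1}^n \ev_i = \rho^{t}$, and therefore $\operatorname{rk}(\ev_w) = \operatorname{rk}(\rho)$. Applying rank--nullity to the map $\ev_w\colon \mathbb{C}^n \to H^0(\p^3, \OP_{\p^3}(N))^{\vee}$ gives
\[
 \dim \Ker(\ev_w) = n - \operatorname{rk}(\ev_w) = |w| - \operatorname{rk}(\rho).
\]
Substituting the formula for $\operatorname{rk}(\rho)$ obtained above yields
\[
 \dim \Ker(\ev_w) = |w| - \dim H^0(\p^3, \OP_{\p^3}(N)) + \dim H^0(\p^3, \I_w(N)),
\]
and by Definition \ref{def_of_defect} the right-hand side equals $d(w)$, which is the desired identity.

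Since the whole argument is formal linear algebra together with the observation $\Ker(\rho) = H^0(\p^3, \I_w(N))$, there is no serious obstacle. The only points meriting a moment's care are that $\Ker(\rho)$ is genuinely independent of the trivializations of $\OP_{\p^3}(N)$ at the $p_i$ (it is, being intrinsically the space of sections vanishing along $w$), and that $\ev_w$ is exactly the transpose of $\rho$ rather than a twisted variant of it; once these are in place, the two applications of rank--nullity finish the proof.
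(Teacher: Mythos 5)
Your argument is correct and is essentially the same as the paper's: the paper likewise observes that the image of $\ev_w$ is determined by $H^0(\p^3,\I_w(N))$ (it identifies the image with the space dual to the quotient by the sections vanishing on $w$) and then concludes by counting dimensions, exactly as your two applications of rank--nullity do. Your phrasing via the transpose of the restriction map $\rho$ is just a slightly more explicit rendering of the same computation.
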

\begin{proof}
 Note that the image of $\ev$ is a dual space to $H^0(\p^3,\I_{w}(N)$, since it consists of all functions on $\p^3$ of 
 degree $N$ vanishing at all points $p_1,\dots,p_n$. Thus, the result follows from Definition \ref{def_of_defect}.
\end{proof}
Lemma \ref{lemma_interpretation_of_def} allows us to compare the defect of a set of points and the defect of its subset.
\begin{lemma}\label{defect_of_subsets}
 If $w'$ is a subset of a set of points $w$, then $d(w')\leqslant d(w)$.
\end{lemma}
\begin{proof}
 Since $\mathbb{C}^{w'}\subset \mathbb{C}^w$, we have an embedding $\Ker(\ev_{w'})\subset \Ker(\ev_w)$. Thus,
 the result follows by Lemma \ref{lemma_interpretation_of_def}.
\end{proof}

Lemma \ref{lemma_interpretation_of_def} reduced the questions about defects to linear algebra questions.
This allows us to consider the following purely algebraic setup.
\begin{notation}
 Consider the $n$-dimensional complex vector space $\mathbb{C}^n$ with a fixed basis $e_1,\dots,e_n$. 
 Denote the set of indices $[n] = \{1,\dots,n\}$. For any subset $W \subset [n]$ define the subspace of $\mathbb{C}^n$:
 \begin{equation*}
  \mathbb{C}^W = \bigoplus_{i\in W} \mathbb{C}\langle e_i \rangle.
 \end{equation*}
 Consider subsets $w_1,\dots,w_m$ of the set $[n]$. They can be considered as vectors 
 in the vector space $\mathbb{F}_2^n$. Then for any $I\subset \{1,\dots,m\}$
 we define by $w_I$ the following sum  
 \begin{equation*}
  w_I = \sum_{i\in I}w_{i}.
 \end{equation*}
 As a set $w_I$ is
 the symmetric difference of $w_i$ for all $i\in I$. 
  
 Finally, for any subset $J\subset \{1,\dots,m\}$ we define the following subspace of $\mathbb{C}^n$:
 \begin{equation*}
  V_J = \mathbb{C}^{(\bigcap_{j\in J}w_j\setminus\bigcup_{k\not\in J} w_k)}.
 \end{equation*}
\end{notation}
\begin{remark}\label{rmk_property_of_VJ}
 Note that for any $j\in [n]$ there exists a unique $J$ such that $e_j\in V_J$.
\end{remark}
Now we can formulate the main technical assertion of this section.
\begin{lemma}\label{lemma_red_to_algebra}
  Assume that vectors $w_1,\dots,w_m$ generates an $m$-dimensional subspace of $\mathbb{F}_2^n$ and $K\subset \mathbb{C}^n$
  is a subspace such that for any non-empty $I\subset\{1,\dots,m\}$ we have 
  \begin{equation*}
   K\cap \mathbb{C}^{w_I}\ne 0.
  \end{equation*}
  Then $\dim(K) \geqslant m$. 
\end{lemma}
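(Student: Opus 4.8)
The plan is to reformulate the statement intrinsically and then argue by induction on the ambient dimension $n$. Since $w_1,\dots,w_m$ are linearly independent over $\mathbb{F}_2$, the vectors $w_I$ (for nonempty $I$) are exactly the nonzero elements of the $m$-dimensional subspace $S=\langle w_1,\dots,w_m\rangle\subset\mathbb{F}_2^n$. Hence the hypothesis says precisely that $K\cap\mathbb{C}^{u}\ne0$ for every nonzero $u\in S$, where for $u\in\mathbb{F}_2^n$ I write $\mathbb{C}^u$ for the coordinate subspace spanned by the $e_j$ with $j\in\mathrm{supp}(u)$, and the goal is $\dim_{\mathbb{C}}K\ge m=\dim_{\mathbb{F}_2}S$. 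For each nonzero $u\in S$ fix once and for all a nonzero vector $v_u\in K\cap\mathbb{C}^u$; note $\mathrm{supp}(v_u)\subseteq\mathrm{supp}(u)$.

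I induct on $n$, the case $S=0$ being trivial. Choose a coordinate $\ell$ that is active on $S$, i.e.\ $u_\ell=1$ for some $u\in S$, and let $p\colon\mathbb{C}^n\to\mathbb{C}^{n-1}$ and $\bar p\colon\mathbb{F}_2^n\to\mathbb{F}_2^{n-1}$ be the projections forgetting the $\ell$-th coordinate. Write $K'=p(K)$ and $S_0=\{u\in S:u_\ell=0\}$, a hyperplane in $S$ of dimension $m-1$. The only tools I need are the rank--nullity identity $\dim K=\dim K'+\dim(K\cap\mathbb{C}e_\ell)$, together with the elementary observation that $p(v_u)\in K'\cap\mathbb{C}^{\bar p(u)}$ and that $p(v_u)=0$ can occur only when $v_u$ is proportional to $e_\ell$.

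I then split into two cases according to whether $e_\ell\in K$. If $e_\ell\in K$, then $\dim(K\cap\mathbb{C}e_\ell)=1$; moreover for every nonzero $u\in S_0$ the vector $v_u$ avoids the coordinate $\ell$, so $p(v_u)=v_u\ne0$ exhibits a nonzero element of $K'\cap\mathbb{C}^{\bar p(u)}$. As $\bar p$ is injective on $S_0$, the subspace $\bar p(S_0)\subset\mathbb{F}_2^{n-1}$ has dimension $m-1$ and $K'$ meets all of its nonzero coordinate subspaces; the inductive hypothesis in ambient dimension $n-1$ gives $\dim K'\ge m-1$, whence $\dim K\ge m$. If $e_\ell\notin K$, then $K\cap\mathbb{C}e_\ell=0$, so $p|_K$ is injective and $\dim K=\dim K'$; also $e_\ell\notin S$, because $e_\ell\in S$ would force $K\cap\mathbb{C}^{e_\ell}=K\cap\mathbb{C}e_\ell\ne0$ and hence $e_\ell\in K$. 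Consequently $\bar p$ is injective on $S$, so $\bar p(S)$ has full dimension $m$, and since $p|_K$ is injective every $p(v_u)$ is nonzero; thus $K'$ meets the coordinate subspace of every nonzero element of $\bar p(S)$, and the inductive hypothesis yields $\dim K=\dim K'\ge m$.

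The crux of the argument is the second case: the naive projection can drop the dimension of $S$ from $m$ to $m-1$ exactly when $e_\ell\in S$, which would break the induction. What saves it is the interplay between the two hypotheses --- the very condition $K\cap\mathbb{C}^{e_\ell}\ne0$ responsible for this drop also forces $e_\ell\in K$, placing us in the first case instead. Thus whenever we are genuinely in the second case we are guaranteed $e_\ell\notin S$, so the dimension of $S$ is preserved under $\bar p$, while the injectivity of $p|_K$ simultaneously preserves the nonvanishing of all the witnesses $v_u$. Isolating an active coordinate and tracking these two effects in tandem is the only delicate point; the remainder is bookkeeping with rank--nullity.
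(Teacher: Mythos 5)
Your proof is correct, but it follows a genuinely different route from the paper's. The paper argues by contradiction in one global step: assuming $\dim(K)=k<m$, it picks a set $Z$ of $n-k$ coordinates with $\mathbb{C}^Z\cap K=0$, and then uses the fact that $k<m$ to solve at most $k$ parity conditions over $\mathbb{F}_2$ and produce a nonzero $w_I$ whose support lies entirely in $Z$, so that $K\cap\mathbb{C}^{w_I}\subseteq K\cap\mathbb{C}^Z=0$ contradicts the hypothesis. You instead induct on the ambient dimension $n$, projecting away one active coordinate $\ell$ at a time and splitting on whether $e_\ell\in K$; the key observation that the hypothesis $K\cap\mathbb{C}^{e_\ell}\ne0$ forces $e_\ell\in K$ exactly when $e_\ell\in S$ is what keeps both $\dim S$ and the witnesses $v_u$ intact under projection. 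The paper's argument is shorter and exhibits the single explicit $w_I$ that would violate the hypothesis; your induction is somewhat longer but more self-contained (it avoids both the choice of a coordinate complement to $K$ and the $\mathbb{F}_2$-solvability count), and it makes visible via rank--nullity exactly where each unit of $\dim K$ comes from. Both are complete; no gaps in yours.
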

\begin{proof}
 Assume that $\dim(K) = k<m$. Then there exists a subset of indices $Z \subset[n]$ such that $|Z| = n-k$ and
 \begin{equation*}
  \mathbb{C}^Z \cap K = 0.
 \end{equation*}
 Denote by $S$ the complement to $Z$ in $[n]$. By Remark \ref{rmk_property_of_VJ} for each $s\in S$ there exists 
 a unique subset $J_s\subset\{1,\dots,m\}$ such that $e_{s} \in V_{J_s}$. Therefore, for each $s\in S$ we have
 \begin{equation*}
  \mathbb{C}^Z\cap V_{J_s}\ne V_{J_s}.
 \end{equation*}
 Now, since the space generated by $w_1,\dots,w_m$ in $\mathbb{F}_2^n$ is $m$-dimensional we can choose 
 an element $w_I$  in the space $\langle w_1,\dots,w_m\rangle$ such that
 \begin{equation*}
  \mathbb{C}^{w_I}\cap \bigoplus_{s\in S} V_{J_s} = 0.
 \end{equation*}
 Then $\mathbb{C}^{w_I}\subset \mathbb{C}^Z$, since otherwise, $j_s\in w_I$ for some $s\in S$ and $\mathbb{C}^{w_I}\cap V_{J_s} \ne 0$.
 However, this contradicts to the assumption that assumption $\mathbb{C}^{w_I}\cap K \ne 0$.
\end{proof}

Finally, we are ready to state the main property of defects of even sets of nodes.
\begin{corollary}\label{codes_and_defect}
 If $B$ is a nodal surface in $\p^3$ and for any non-zero even set of nodes $w\in\codes$ one has~$d(w)\geqslant 1$ then~$d(\Sing(B))\geqslant \dim(\codes)$.
\end{corollary}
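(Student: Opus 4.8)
The plan is to reduce the statement directly to the linear-algebra Lemma \ref{lemma_red_to_algebra}. Set $n = |\Sing(B)|$ and identify $2^{\Sing(B)}$ with $\mathbb{F}_2^n$ by fixing an enumeration $\Sing(B) = \{p_1,\dots,p_n\}$; under this identification $\codes$ becomes a subspace of $\mathbb{F}_2^n$. Let $m = \dim(\codes)$ and choose an $\mathbb{F}_2$-basis $w_1,\dots,w_m$ of $\codes$, so that these vectors generate an $m$-dimensional subspace of $\mathbb{F}_2^n$, exactly as required in the hypothesis of Lemma \ref{lemma_red_to_algebra}.

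Next I would take $K = \Ker(\ev_{\Sing(B)}) \subset \mathbb{C}^n$ to be the kernel of the evaluation map \eqref{eq_ev} associated to the full singular locus. By Lemma \ref{lemma_interpretation_of_def} we have $\dim(K) = d(\Sing(B))$, so it suffices to prove $\dim(K) \geqslant m$, which is precisely the conclusion of the algebraic lemma.

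The key observation connecting the two lemmas is that, for any subset $w' \subset \Sing(B)$, the evaluation map $\ev_{w'}$ is exactly the restriction of $\ev_{\Sing(B)}$ to the coordinate subspace $\mathbb{C}^{w'}$; hence $\Ker(\ev_{w'}) = K \cap \mathbb{C}^{w'}$ and, applying Lemma \ref{lemma_interpretation_of_def} once more, $\dim(K \cap \mathbb{C}^{w'}) = d(w')$. Now for any non-empty index set $I \subset \{1,\dots,m\}$ the element $w_I = \sum_{i\in I} w_i$ is a non-zero vector of $\codes$, since $w_1,\dots,w_m$ are linearly independent over $\mathbb{F}_2$; as a subset of $\Sing(B)$ it is therefore a non-empty even set of nodes. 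The hypothesis of the corollary then gives $d(w_I) \geqslant 1$, which by the identification above means $K \cap \mathbb{C}^{w_I} \ne 0$.

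With these verifications in place, the hypotheses of Lemma \ref{lemma_red_to_algebra} hold for the vectors $w_1,\dots,w_m$ and the subspace $K$, and the lemma yields $\dim(K) \geqslant m = \dim(\codes)$, as desired. The argument is essentially formal once the dictionary is set up; the only point requiring care—and the step I expect to be the main (minor) obstacle—is checking that passing from a subset to a symmetric-difference combination $w_I$ keeps $\ev_{w_I}$ equal to the restriction of $\ev_{\Sing(B)}$ to $\mathbb{C}^{w_I}$, so that the non-vanishing of the defect of each even set translates faithfully into the intersection condition $K \cap \mathbb{C}^{w_I} \ne 0$. Here one must also use that $I$ being a non-empty set of basis indices forces $w_I$ to be a genuine non-zero even set of nodes rather than the empty set, so that the hypothesis $d(w_I)\geqslant 1$ applies.
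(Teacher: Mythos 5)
Your proposal is correct and follows essentially the same route as the paper: both take $K=\Ker(\ev_{\Sing(B)})$, use Lemma \ref{lemma_interpretation_of_def} to translate the defect hypotheses into the intersection conditions $K\cap\mathbb{C}^{w_I}\ne 0$ for all non-zero $w_I\in\codes$, and then invoke Lemma \ref{lemma_red_to_algebra}. Your write-up is in fact somewhat more careful than the paper's, which leaves the identification $\Ker(\ev_{w'})=K\cap\mathbb{C}^{w'}$ implicit.
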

\begin{proof}
 Consider the vector space $\mathbb{C}^n = \mathbb{C}^{\Sing(B)}$ generated by all singularities of $B$.
 This is a vector subspace with a canonical basis indexed by the set $[n]$. Each even set of nodes $w\in \codes$ corresponds
 to a subset of $[n]$.
 Denote by $K$ the kernel of the map $\ev_{\Sing(B)}$ defined in~\eqref{eq_ev}.
 
 Then for any non-zero $w\in \codes$ the space $K$ intersects with $\mathbb{C}^w$ by the subspace $\Ker(\ev_w)$ 
 which is non-zero by the assumption and Lemma \ref{lemma_interpretation_of_def}.
 Thus, by Lemma \ref{lemma_red_to_algebra} we get the result.
\end{proof}

\section{Defect of minimal $1/2$-even sets of nodes}\label{sect_computations_of_defect}
In this section we are going to describe the conditions on Casnati--Catanese bundle of the even sets of
nodes~$w$ which arise if we assume that the defect of $w$ vanishes. From now on we consider only 
nodal sextic surfaces. In particular, in the notation of Theorem \ref{Clemens} we fix~\mbox{$N=5$}; thus, 
the defect is the $5$-defect. 

By the next assertion we can restricts to only those even sets of nodes which 
are not cut out by a plane.
\begin{proposition}[{\cite[Page 6]{Endrass99}}]\label{prop_es_cut_by_plane}
 If $w$ is cut out by a plane, then $d(w)>0$.
\end{proposition}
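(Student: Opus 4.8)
The plan is to compute the $5$-defect $d(w)$ directly from Definition \ref{def_of_defect} and Lemma \ref{lemma_d=h^1}, exploiting that every point of $w$ lies on the plane cubic $C$. Fix a linear form $\ell$ cutting out $\Pi$, and recall that $\Pi\cdot B=2C$ with $C\subset\Pi$ a plane cubic. First I would pin down the equation of $B$ near $\Pi$: choosing coordinates with $\Pi=\{x_3=0\}$, one can write $B=\{g^2+x_3Q=0\}$ where $g$ is a cubic form with $\{g=0\}\cap\Pi=C$ and $Q$ is a quintic form. Computing the partial derivatives of $g^2+x_3Q$ along $\Pi$ shows that a point of $C$ is a singular point of $B$ precisely when $Q$ vanishes there; hence, as sets, $w=C\cap\{Q|_\Pi=0\}$. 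Since $B$ is irreducible, $g\nmid Q$, so this intersection is zero-dimensional, and $w$ is cut out on $C$ by the quintic $Q|_\Pi$. In particular $w$ is linearly equivalent to $5H|_C$ on $C$, where $H$ denotes the hyperplane class.

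Next I would reduce the ambient computation to $C$ in two steps. Since $w\subset\Pi$, restriction of quintics to $\Pi$ has kernel $\ell\cdot H^0(\p^3,\OP_{\p^3}(4))$, every element of which vanishes on $w$; moreover every plane quintic through $w$ lifts to a spatial one, so
\[
 h^0(\p^3,\I_w(5))=\binom{7}{3}+h^0(\Pi,\I_{w,\Pi}(5))=35+h^0(\Pi,\I_{w,\Pi}(5)).
\]
Then, using $w\subset C$ and the isomorphism $\I_{C,\Pi}(5)\cong\OP_\Pi(2)$, the short exact sequence
\[
 0\to\OP_\Pi(2)\to\I_{w,\Pi}(5)\to\OP_C(5H-w)\to0
\]
together with $h^1(\Pi,\OP_\Pi(2))=0$ gives $h^0(\Pi,\I_{w,\Pi}(5))=6+h^0(C,\OP_C(5H-w))$. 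Substituting into Lemma \ref{lemma_d=h^1} and applying Riemann--Roch on $C$, whose arithmetic genus equals $1$, the terms involving $|w|$ cancel and one is left with the clean identity $d(w)=h^1(C,\OP_C(5H-w))$.

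To finish, I would feed in the divisor relation from the first paragraph: since $w\sim 5H|_C$, the line bundle $\OP_C(5H-w)$ is trivial, and therefore $d(w)=h^1(C,\OP_C)=p_a(C)=1>0$.

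The delicate point is the claim $w\sim 5H|_C$, which requires the scheme $C\cap\{Q|_\Pi=0\}$ to be reduced, so that the reduced node set $w$ represents the entire length-$15$ intersection cycle. This is exactly where nodality of $B$ is used: I would argue that if $C$ and $\{Q|_\Pi=0\}$ met non-transversally at some $p\in w$, then the quadratic part $g_1^2+x_3Q_1$ of the local equation $g^2+x_3Q$ at $p$ would depend on at most two linear forms and hence have rank at most two, contradicting the assumption that $p$ is an ordinary node of $B$. Transversality then yields a reduced $w$ of length $15$ with $w\sim 5H|_C$, which completes the argument.
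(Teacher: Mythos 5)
The paper does not actually prove this proposition --- it is imported from Endrass (the cited ``Page 6''), so there is no in-paper proof to compare against. Your argument is correct and self-contained, and it is the natural computation: reduce the count of quintics through $w$ to the plane $\Pi$ and then to the cubic $C$, identify $w$ with the transverse intersection $C\cap\{Q|_\Pi=0\}$ so that $|w|=15$ and $w\sim 5H|_C$, and conclude $d(w)=h^1(C,\OP_C(5H-w))=h^1(C,\OP_C)=p_a(C)=1>0$. The rank computation on the quadratic part $g_1^2+x_3Q_1$ is exactly the right way to extract transversality from nodality, and it also guarantees that $C$ is smooth at each point of $w$, which is what makes $w$ a Cartier divisor on the (possibly singular) cubic $C$.

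One small imprecision: the reason $C\cap\{Q|_\Pi=0\}$ is zero-dimensional is not irreducibility of $B$ but nodality. The relevant divisibility is $g|_\Pi\nmid Q|_\Pi$ (a statement about restrictions to the plane, which $g\nmid Q$ in $\p^3$ does not immediately imply); if $Q|_\Pi$ vanished identically on $C$, then every point of $C$ would be a singular point of $B$, contradicting the fact that a nodal surface has only isolated singularities. This is a cosmetic fix and the proof stands.
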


Now let us consider the case of minimal $1/2$-even sets of nodes. Their Casnati--Catanese bundles are described
in Proposition \ref{description_of_E_1_2_even}. If we assume that the defect of such a set of nodes 
vanishes, then we get the condition 
on numbers $k$ and $m_i$ arising in the following proposition.
\begin{proposition}\label{prop_techlemma}
Assume that $B$ is an irreducible nodal sextic surface and $w$ is a minimal~\mbox{$1/2$-even} set of nodes on $B$ and~\mbox{$d(w)=0$}. Then,
in the notation of Proposition $\ref{description_of_E_1_2_even}$, we have that either
\begin{enumerate}
  \item[$(1)$] $m_2=0$ and $k = 0, 4,5$ or $6$;
  \item[$(2)$] $m_2=1$ and $k=0$.
\end{enumerate}
\end{proposition}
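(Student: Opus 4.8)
The plan is to convert the hypothesis $d(w)=0$ into a numerical identity for $h^0(\p^3,\I_w(5))$ and then read off the admissible pairs $(m_2,k)$ from the shape of the Casnati--Catanese bundle. Since $d(w)=0$, Proposition~\ref{prop_es_cut_by_plane} guarantees that $w$ is not cut out by a plane, so Proposition~\ref{description_of_E_1_2_even} and Lemmas~\ref{conditions}, \ref{lemma_RR}, \ref{lemma_cond_on_m_4} and \ref{lemma_m_2=1_then_inclusion} all apply. Combining Lemma~\ref{lemma_d=h^1} with Lemma~\ref{conditions}$(1)$, the condition $d(w)=0$ is equivalent to
\begin{equation*}
 h^0(\p^3,\I_w(5)) = 56-|w| = 21+4m_2-4k.
\end{equation*}
I would also record, from Proposition~\ref{description_of_E_1_2_even} and the Riemann--Roch formula~\eqref{eq_Riemann-Roch}, the values $h^0(\p^3,\F(2))=m_2+k$ and $\chi(\F(3))=6+m_2-k$; and I note that for $m_2=0$ the estimates \ref{conditions}$(2)$ and \ref{conditions}$(3)$ force $m_3=6-k$ and $m_4=0$, so that $\E=\Omega^1_{\p^3}(-2)^{\oplus k}\oplus\OP_{\p^3}(-3)^{\oplus(6-k)}$ and $|w|=35+4k$.

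The case $m_2=1$ is the quickest. Here $h^1(\p^3,\F(3))=0$ by Lemma~\ref{property_of_minimality} and $h^2(\p^3,\F(3))=0$ by Serre duality~\eqref{eq_Serre_duality} together with the vanishing of $h^0(\p^3,\F(n))$ for $n\leqslant 1$ (valid since $w$ is not cut out by a plane), so $h^0(\p^3,\F(3))=\chi(\F(3))=7-k$. On the other hand, Lemma~\ref{lemma_m_2=1_then_inclusion} gives an embedding $H^0(\p^3,\F(2))\otimes V^\vee\hookrightarrow H^0(\p^3,\F(3))$, whence $h^0(\p^3,\F(3))\geqslant 4\,h^0(\p^3,\F(2))=4(1+k)$. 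Comparing the two expressions forces $7-k\geqslant 4+4k$, i.e. $k=0$; this is conclusion~$(2)$.

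For $m_2\neq 1$ I would use the displayed identity for $h^0(\p^3,\I_w(5))$ directly. The set $w$ is the corank-two locus of the symmetric morphism $\Phi$ of~\eqref{es_F}, and it has the expected codimension $3$ in $\p^3$; hence $\OP_w$ carries a finite locally free resolution assembled from $\E$ and $\Phi$ (the symmetric degeneracy-locus complex), which computes $h^0(\p^3,\I_w(5))=\dim(\I_w)_5$ in terms of $k$ and the $m_i$. Concretely, $\I_w$ is generated by the $(\mathrm{rk}\,\E-1)$-minors of $\Phi$, i.e. by the entries of $\mathrm{adj}(\Phi)\in H^0(\p^3,S^2\E^\vee(-1))$ (the twist being fixed by~\eqref{eq_deg_rk}), and with respect to the block decomposition~\eqref{eq_e_i} the minor obtained by deleting an $\E_i$-row and an $\E_j$-column has degree $i+j-1$. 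For $m_2\geqslant 2$ the $\binom{m_2+1}{2}\geqslant 3$ cubic minors coming from the $\OP_{\p^3}(-2)$-block already yield, after multiplication by quadrics, strictly more than $21+4m_2-4k$ independent quintics through $w$ — the complete-intersection model $\E=\OP_{\p^3}(-2)^{\oplus 2}$, for which $w$ is the intersection of three cubics and $h^0(\p^3,\I_w(5))=30>29$, is the prototype — so $d(w)>0$ and all such bundles are excluded. For $m_2=0$ one computes $h^0(\p^3,\I_w(5))$ from the same resolution of the bundle $\Omega^1_{\p^3}(-2)^{\oplus k}\oplus\OP_{\p^3}(-3)^{\oplus(6-k)}$ and imposes $h^0(\p^3,\I_w(5))=21-4k$; this equality fails, with $h^0(\p^3,\I_w(5))>21-4k$, exactly for $k=1,2,3$, and survives for the remaining values $k\in\{0,4,5,6\}$, giving conclusion~$(1)$.

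The hard part is the computation of $h^0(\p^3,\I_w(5))$ for the bundles containing $\Omega^1_{\p^3}(-2)$-summands. There $w$ is no longer a complete intersection, and one must control the symmetric degeneracy-locus resolution of $\OP_w$ — equivalently, the degree-$5$ component of the ideal generated by the minors of $\Phi$ — in the presence of the non-split factor $\Omega^1_{\p^3}(-2)$, and then check the resulting numerical (in)equalities for each $k$ (including the independence of the constructed quintics). By contrast, the reduction of the first paragraph and the $m_2=1$ argument are essentially formal consequences of Lemmas~\ref{conditions}, \ref{lemma_RR} and \ref{lemma_m_2=1_then_inclusion}.
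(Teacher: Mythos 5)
Your reduction of $d(w)=0$ to the single identity $h^0(\p^3,\I_w(5))=56-|w|=21+4m_2-4k$ is correct, and your treatment of case $(2)$ is a complete and genuinely different argument from the paper's: combining the injectivity of $\gamma$ from Lemma~\ref{lemma_m_2=1_then_inclusion} with $h^0(\p^3,\F(3))=\chi(\F(3))=7-k$ forces $7-k\geqslant 4(1+k)$, hence $k=0$ in one stroke, whereas the paper splits this case into $0<k<4$ (via the cokernel of a differential) and $k\geqslant 4$ (via Lemma~\ref{lemma_cond_on_m_4}). That part I would accept as written.

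The gap is in everything else, which is the actual content of the proposition. You assert that for $m_2\geqslant 2$ the quintics obtained from the cubic minors are ``strictly more than $21+4m_2-4k$'' independent ones, and that for $m_2=0$ the equality $h^0(\p^3,\I_w(5))=21-4k$ ``fails exactly for $k=1,2,3$''; neither claim is proved, and the second cannot be settled by a computation from the bundle alone. The number $h^0(\p^3,\I_w(5))$ is not a function of $(k,m_2,m_3,m_4)$: from the symmetric degeneracy-locus resolution \eqref{eq_es_for_Iw} (exactly the tool the paper uses in Lemma~\ref{es_for_Iw}) one gets a hypercohomology spectral sequence in which $d(w)=h^1(\p^3,\I_w(5))$ receives a contribution from $\Coker\bigl(D\colon H^1(\p^3,\spl(\E)(-1))\to H^1(\p^3,S^2\E(6))\bigr)$, and $D$ depends on the morphism $\Phi$, not just on $\E$. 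The paper's real work (Lemma~\ref{lemma_k=1_or_2}) is to bound $\dim\Coker(D)$ from below \emph{for every} $\Phi$, using that the block $\Phi_{11}$ lies in $\Lambda^2W\otimes V$ --- so it vanishes when $k=1$ and is decomposable when $k=2$ --- which is a structural argument about the map rather than the ``numerical (in)equalities'' you propose to check. Your independence claim for the products of minors by quadrics in the $m_2\geqslant2$ case is likewise unargued (minors of a structured matrix can carry low-degree syzygies), and here you missed the clean route: $E_1^{3,2}=H^3(\p^3,\Lambda^2\E^{\vee}(-8))=\Lambda^2U_2^{\vee}$ admits no nonzero differentials in or out, so it survives to $E_\infty$ and gives $d(w)\geqslant\binom{m_2}{2}$, whence $m_2\leqslant1$ immediately. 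As written, your proposal establishes the reduction and case $(2)$, but leaves the exclusion of $m_2\geqslant2$ and of the intermediate values of $k$ for $m_2=0$ as announcements.
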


In view of Lemma \ref{lemma_d=h^1} we are interested in the cohomologies of the sheaf $\I_w(5)$. To compute them
we will use the following free resolution $F_{\bullet}\to\I_w(5)$ of this sheaf. By $\spl(\E)$ here we denote the
bundle of traceless endomorphisms of $E$. 
\begin{lemma}\label{es_for_Iw}
Assume $B$ is a nodal sextic surface with a $\delta/2$-even set of nodes $w$ and $\E$ is a Casnati--Catanese bundle of $w$. 
Then we have the following exact sequence:
\begin{equation}\label{eq_es_for_Iw}
 0\to \Lambda^2 \E^{\vee}(-7-\delta) \xrightarrow{\psi} \spl(\E)(-1)\xrightarrow{\phi}  S^2 \E(5+\delta) \xrightarrow{\varepsilon} \I_w(5) \to 0.
\end{equation}
Morphism $\varepsilon$ is the cokernel of $\phi$;
morphisms~$\psi$ and $\phi$ are the following compositions:
\begin{align*}
 &\psi:\Lambda^2 \E^{\vee}(-7-\delta) \hookrightarrow \E^{\vee}\otimes \E^{\vee} (-7-\delta) \xrightarrow{\Phi\otimes \mathrm{Id}_{\E}} \E\otimes \E^{\vee}(-1) \twoheadrightarrow
 \frac{\E\otimes \E^{\vee}}{\OP_{\p^3}}(-1); \\
 &\phi: \spl(\E)(-1) \hookrightarrow  \E\otimes \E^{\vee}(-1) \xrightarrow{ \mathrm{Id}_{\E}\otimes \Phi(5+\delta)} 
 \E\otimes\E(5+\delta) \twoheadrightarrow S^2 \E(5+\delta).
\end{align*}
\end{lemma}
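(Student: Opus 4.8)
The plan is to realise \eqref{eq_es_for_Iw} as the locally free resolution of the corank-$2$ degeneracy locus of the symmetric morphism $\Phi$ produced by Theorem~\ref{thm_CC}, where for a sextic $\deg(B)=6$, so \eqref{es_F} reads $0\to\E^{\vee}(-6-\delta)\xrightarrow{\Phi}\E\to\F\to 0$. All four terms are locally free (resp. an ideal sheaf), so exactness may be tested stalkwise; the real content is that \eqref{eq_es_for_Iw} is a complex, that $\mathrm{coker}(\phi)$ is canonically $\I_w(5)$, and that the sequence is acyclic in the middle. First I would record the numerical check that fixes the generic rank: writing $r=\mathrm{rk}(\E)$, the alternating sum of ranks is $\binom{r+1}{2}-(r^2-1)+\binom r2=1$, matching the rank of $\I_w(5)$, and all the twists are forced by \eqref{eq_deg_rk}, which gives $\det(\Phi)\in H^0(\OP_{\p^3}(6))$ cutting out $B$.

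Second, I would construct the last map and verify the complex property. Since $\Phi$ is symmetric, its adjugate is a symmetric global section $\mathrm{adj}(\Phi)\in H^0(\p^3,S^2\E^{\vee}(-\delta))=\mathrm{Hom}\bigl(S^2\E(5+\delta),\OP_{\p^3}(5)\bigr)$ with $\Phi\circ\mathrm{adj}(\Phi)=\det(\Phi)\cdot\mathrm{Id}$. Pairing against $\mathrm{adj}(\Phi)$ defines $\varepsilon\colon S^2\E(5+\delta)\to\OP_{\p^3}(5)$, and $\Image(\varepsilon)$ is the ideal generated by the entries of $\mathrm{adj}(\Phi)$, i.e. by the submaximal minors of $\Phi$; these cut out precisely $\{\mathrm{corank}(\Phi)\geqslant 2\}=w$ by Theorem~\ref{Catanese_Casnati}, so $\Image(\varepsilon)=\I_w(5)$. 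The relation $\varepsilon\circ\phi=0$ is the identity $\mathrm{tr}\bigl(\mathrm{adj}(\Phi)\,\Phi\,A\bigr)=\det(\Phi)\,\mathrm{tr}(A)=0$ for traceless $A\in\spl(\E)$. For $\phi\circ\psi=0$: after $\psi$ applies $\Phi$ to the first factor of $\Lambda^2\E^{\vee}\hookrightarrow\E^{\vee}\otimes\E^{\vee}$, the resulting tensor in $\E\otimes\E^{\vee}$ has vanishing trace by symmetry of $\Phi$ (so passing through $\spl(\E)$ changes nothing), and then $\phi$ applies $\Phi$ to the remaining factor and symmetrises an antisymmetric tensor, which gives $0$. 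The same symmetry computation shows $\psi$ is generically injective, hence injective as a map of torsion-free sheaves.

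Third, I would establish exactness in the middle by localising and reducing to the universal situation. Trivialising $\E$ near a point turns $\Phi$ into a symmetric $r\times r$ matrix $M$, and \eqref{eq_es_for_Iw} becomes a twist of the complex assembled from $\Lambda^2$, $\spl$, and $S^2$ of the generic symmetric matrix, namely J\'ozefiak's minimal resolution of the ideal of submaximal minors, whose expected codimension is $\binom{2+1}{2}=3$. Since $B$ is nodal, $\Sing B=w$ is $0$-dimensional, so the corank-$2$ locus has codimension exactly $3$ in $\p^3$ and is reduced; the specialised complex thus retains the rank and depth hypotheses of the Buchsbaum--Eisenbud acyclicity criterion at every point (the differentials have the ranks forced by the generic case, and $\mathrm{depth}\,\I_{w,x}=3$ at $x\in w$), so it stays exact and $\mathrm{coker}(\phi)\cong\I_w(5)$. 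Away from $B$ the matrix $M$ is invertible and the complex is split exact, while on $B\setminus w$ the corank is $1$, where $\I_w$ is the unit ideal and the criterion again gives exactness; only the corank-$2$ points need the full argument.

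The main obstacle is this third step: identifying \eqref{eq_es_for_Iw} with the universal symmetric-determinantal resolution and certifying that specialising to our particular $\Phi$ preserves acyclicity. This hinges entirely on $w=\{\mathrm{corank}(\Phi)\geqslant 2\}$ having the expected codimension $3$ and being reduced, which is exactly what nodality of $B$ supplies; keeping the twists $-1$, $5+\delta$, and $-7-\delta$ consistent through the canonical splittings $\E\otimes\E^{\vee}=\OP_{\p^3}\!\cdot\mathrm{Id}\oplus\spl(\E)$ and $\E\otimes\E=S^2\E\oplus\Lambda^2\E$ is the routine but error-prone bookkeeping underlying the whole construction.
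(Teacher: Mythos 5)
Your proposal is correct and takes essentially the same route as the paper: the paper identifies \eqref{eq_es_for_Iw} locally with J\'ozefiak's complex $\mathbb{L}(X)$ resolving the ideal of submaximal minors of a symmetric matrix, cites his Theorem~3.1 for exactness, and fixes the twists by the degree bookkeeping $n_1-n_2=n_2-n_3=6+\delta$. Your additional verifications (the adjugate construction of $\varepsilon$, the trace identities, and acyclicity via the expected codimension $3$ supplied by nodality) are precisely the content of the cited theorem written out in detail.
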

\begin{proof}
 Since $\E$ is a Catanese-Casnati bundle by Definition \ref{def_CC} and Theorem \ref{Catanese_Casnati}
 the even set of nodes $w$ as a scheme coincide with $\cork(\Phi)\geqslant 2$. Thus, locally the exact sequence \eqref{eq_es_for_Iw}
 looks like the sequence $\mathbb{L}(X)$ in  \cite{Jozefiak}. Thus, the result by follows by~\cite[Theorem 3.1]{Jozefiak}.

 Moreover, we know, that 
 \begin{equation*}
 n_1-n_2 = n_2-n_3 = \deg(\Phi) = 6 + \delta.
 \end{equation*}
 Since $\deg(\I_w(5)) = 5$, we can compute all the numbers $n_1$, $n_2$ and $n_3$.
\end{proof}

Remind the notations $W$ and $U_i$ from  \eqref{eq_form_of_CC_bundle}. 
In terms of them we can describe cohomologies of the bundles~$F_i$ (here $V$ is as in Notation \ref{notation_main}):
\begin{lemma}\label{lemma_cohomologies_F_i}
 The bundles $S^2\E(6)$, $\spl(\E)(-1)$ and $\Lambda^2 \E^{\vee}(-8)$  have the following cohomology groups:
 \begin{equation*}
 \begin{split}
  &H^{0}(\p^3,S^2\E(6)) = \left(W\otimes U_2 \otimes \Lambda^2V^{\vee}\right)\oplus 
        \left(S^2 U_2 \otimes S^2V^{\vee}\right)
        \oplus  \left(U_2\otimes U_3 \otimes V^{\vee}\right)	 \\ 
   & \oplus S^2 U_3\oplus (U_2\otimes U_4);\\
    &H^{1}(\p^3,S^2\E(6)) =  \left(S^2 W \otimes \Lambda^2V^{\vee})\right)
    \oplus \left(W\otimes U_4\right); \\   
  &H^{0}(\p^3,\spl(\E)(-1)) = \left(U_2\otimes W^{\vee}\otimes V\right) 
  \oplus ( U_2\otimes U_3^{\vee})\oplus (U_3\otimes U_4^{\vee})	
  \oplus \left(U_2\otimes U_4^{\vee}\otimes V^{\vee}\right);\\
  &H^{1}(\p^3,\spl(\E)(-1)) = \left( W\otimes W^{\vee}\otimes V)\right)  
 \oplus (W\otimes U_3^{\vee}); \\ 
 &H^{0}(\p^3,\Lambda^2 \E^{\vee}(-8))  = \Lambda^2 U_4^{\vee} ;\\
 &H^{1}(\p^3,\Lambda^2 \E^{\vee}(-8))  =   S^2W^{\vee}; \\
  &H^{2}(\p^3,\Lambda^2 \E^{\vee}(-8))  = U_2^{\vee}\otimes W^{\vee}; \\
  &H^{3}(\p^3,\Lambda^2 \E^{\vee}(-8))  =\Lambda^2 U_2^{\vee}. \\
  \end{split}
 \end{equation*}
 All other cohomology groups vanish.
\end{lemma}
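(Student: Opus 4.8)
**

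The plan is to compute each cohomology group directly from the explicit decomposition of $\E$ given in \eqref{eq_form_of_CC_bundle}, using the standard cohomology of line bundles and of the (co)tangent bundle on $\p^3$. Since $\E = (\Omega^1_{\p^3}(-2)\otimes W)\oplus(\OP_{\p^3}(-2)\otimes U_2)\oplus(\OP_{\p^3}(-3)\otimes U_3)\oplus(\OP_{\p^3}(-4)\otimes U_4)$, each of the three bundles $S^2\E(6)$, $\spl(\E)(-1)$ and $\Lambda^2\E^\vee(-8)$ splits into a finite direct sum of tensor products of symmetric/exterior powers of the summands. Cohomology commutes with finite direct sums, so the strategy is purely bookkeeping: enumerate all the summands, discard those whose cohomology vanishes, and record the survivors.

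First I would record the basic cohomology facts on $\p^3$ that drive everything: for $\OP_{\p^3}(a)$ one has $H^0\neq 0$ iff $a\geqslant 0$ (giving $S^aV^\vee$) and $H^3\neq 0$ iff $a\leqslant -4$; for the (co)tangent twists, $\Omega^1_{\p^3}(j)$ and $T_{\p^3}(j)$ have the single nonzero cohomology groups dictated by the Euler sequence and Bott's formula, the key cases being $H^1(\Omega^1_{\p^3})=\mathbb{C}$, $H^0(\Omega^1_{\p^3}(2))=\Lambda^2V^\vee$, $H^0(T_{\p^3})=\spl(V)\oplus$(scalars), and so on. I would also need $S^2\Omega^1_{\p^3}$ and $\Lambda^2\Omega^1_{\p^3}=\Omega^2_{\p^3}$ together with their appropriate twists, since the $W$-summand of $\E$ contributes these. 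These are all standard consequences of Bott's formula, so I would simply cite them rather than rederive them.

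Next I would expand each of the three bundles. For $S^2\E(6)$ the summands are $S^2(\Omega^1(-2))\otimes S^2W$, the cross terms $\Omega^1(-2)\otimes\OP(-i)\otimes W\otimes U_i$, and the pairwise products among the line-bundle summands, each twisted by $\OP(6)$. For $\spl(\E)(-1)$ I would first write $\E\otimes\E^\vee(-1)$ as the full matrix of blocks $\Hom$-summands and then remove the scalar $\OP_{\p^3}$ to pass to the traceless part; the relevant blocks are governed by $\Omega^1\otimes T$, $\Omega^1(j)$, $T(j)$ and line bundles. For $\Lambda^2\E^\vee(-8)$ the summands are $\Lambda^2(\Omega^1(-2))^\vee\otimes\Lambda^2W^\vee$, the cross terms, and $\Lambda^2$ of the dual line-bundle part, twisted by $\OP(-8)$. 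In each case I would match the surviving summand to the stated answer; for instance $H^1(S^2\E(6))$ picks up $S^2W\otimes\Lambda^2V^\vee$ from $S^2(\Omega^1(-2))(6)=S^2\Omega^1(2)$ (whose $H^1$ carries a $\Lambda^2V^\vee$ factor) and $W\otimes U_4$ from the $\Omega^1(-2)\otimes\OP(-4)(6)=\Omega^1$ cross term (with $H^1(\Omega^1)=\mathbb{C}$).

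The main obstacle I expect is not any single computation but the sheer accuracy required across all four twist levels: I must be careful to identify exactly which Schur functors appear (e.g.\ $H^0(T_{\p^3})$ splitting into a traceless $\spl(V)$ part plus a scalar, which matters once I subtract the trace to form $\spl(\E)$), and to get the $V$ versus $V^\vee$ and symmetric versus exterior factors correct. The genuinely delicate points are the self-pairing terms: the $S^2$ of the cotangent summand and the $\spl$ of the cotangent block, where $\Omega^1\otimes T_{\p^3}$ must be decomposed and the identity endomorphism removed. I would handle these by writing out $\Omega^1\otimes T_{\p^3}=\spl(\Omega^1)\oplus\OP_{\p^3}$ type decompositions explicitly and reading off cohomology from Bott. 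Once every summand is tabulated and the vanishing ones discarded, the stated lists follow, and the final sentence asserting that all other cohomology groups vanish is exactly the statement that no further summand survives.
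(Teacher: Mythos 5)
Your proposal is correct and follows essentially the same route as the paper: the paper's proof likewise expands $S^2\E(6)$, $\spl(\E)(-1)$ and $\Lambda^2\E^{\vee}(-8)$ into direct sums of twists of line bundles and of $\Omega^1_{\p^3}$, $T_{\p^3}$ and their symmetric/exterior squares via \eqref{eq_form_of_CC_bundle}, and then reads off the cohomology of each summand by Bott's formula. The delicate points you flag (the self-pairing terms $S^2(\Omega^1(-2)\otimes W)$ and the traceless part of $\E\otimes\E^{\vee}$, whose discarded $\OP_{\p^3}(-1)$ is cohomologically invisible anyway) are exactly the ones the paper's tabulation has to handle.
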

\begin{proof}
By \eqref{eq_form_of_CC_bundle} the bundles $S^2\E(6)$, $\spl(\E)$ and $\Lambda^2 \E^{\vee}(-8)$ split into a direct sum of tensor 
powers of the tangent bundle and some line bundles of $\p^3$ (here for simplicity we 
write~\mbox{$\OP$, $T$} and~$\Omega^1$ instead of $\OP_{\p^3}$, $T_{\p^3}$ and $\Omega^1_{\p^3}$).
\begin{align*}
 S^2\E(6) &= (S^2 W \otimes S^2\Omega^1(2))\oplus (\Lambda^2 W \otimes \Lambda^2\Omega^1(2)) \oplus (W\otimes U_2\otimes \Omega^1(2))
 \\&\oplus (W\otimes U_3\otimes \Omega^1(1))
 \oplus (W\otimes U_4\otimes \Omega^1)
 \oplus (S^2U_2\otimes\OP(2))
  \oplus (U_2\otimes U_3\otimes\OP(1))
 \\&\oplus (S^2U_3\otimes\OP) 
 \oplus (U_2\otimes U_4\otimes\OP) 
 \oplus (U_3\otimes U_4\otimes\OP(-1))
 \oplus (S^2U_4\otimes\OP(-2));\\
 \spl(E)(-1) &= (W^{\vee}\otimes W\otimes \Omega^1\otimes T(-1))\oplus (W^{\vee}\otimes U_2\otimes T(-1))\oplus (W^{\vee}\otimes U_3\otimes T(-2))
 \\&\oplus (W^{\vee}\otimes U_4\otimes T(-3))
  \oplus (U_2^{\vee}\otimes W\otimes\Omega^1(-1)) \oplus (U_2^{\vee}\otimes U_2\otimes\OP(-1))
  \\& \oplus (U_2^{\vee}\otimes U_3\otimes\OP(-2))
 \oplus (U_2^{\vee}\otimes U_4\otimes\OP(-3)) 
 \oplus  (U_3^{\vee}\otimes W\otimes\Omega^1) \oplus (U_3^{\vee}\otimes U_2\otimes\OP)
 \\&\oplus (U_3^{\vee}\otimes U_3\otimes\OP(-1)) 
 \oplus (U_3^{\vee}\otimes U_4\otimes\OP(-2)) 
  \oplus  (U_4^{\vee}\otimes W\otimes\Omega^1(1)) 
  \\&\oplus (U_4^{\vee}\otimes U_2\otimes\OP(1))
  \oplus (U_4^{\vee}\otimes U_3\otimes\OP)
 \oplus (U_4^{\vee}\otimes U_4\otimes\OP(-1));\\
 \Lambda^2 \E^{\vee}(-8) &= (S^2 W^{\vee} \otimes \Lambda^2T(-4)) \oplus (\Lambda^2 W^{\vee} \otimes S^2T(-4)) 
 \oplus (W^{\vee}\otimes U_2^{\vee}\otimes T(-4))
 \\&\oplus (W^{\vee}\otimes U_3^{\vee}\otimes T(-3)) 
 \oplus (W^{\vee}\otimes U_4^{\vee}\otimes T(-2)) 
 \oplus(\Lambda^2U_2^{\vee}\otimes\OP(-4)) 
 \\&\oplus(U_2^{\vee}\otimes U_3^{\vee}\otimes \OP(-3))
 \oplus(U_2^{\vee}\otimes U_4^{\vee}\otimes \OP(-2))
 \oplus(\Lambda^2U_3^{\vee}\otimes\OP(-2))
 \\&\oplus(U_3^{\vee}\otimes U_4^{\vee}\otimes \OP(-1))
 \oplus(\Lambda^2U_4^{\vee}\otimes\OP)
\end{align*}
Computing the cohomology groups of these bundles on $\p^3$, we get the result. 
\end{proof}


The decomposition \eqref{eq_e_i} induces the decomposition of the morphism $\phi$ from \eqref{eq_es_for_Iw}. 
We consider the bundles $S^2\E(6)$ and $\spl(E)(-1)$ as subbundles of $\E\otimes\E(6)$ and $\E\otimes\E^{\vee}(-1)$.
Thus, we introduce 
\begin{equation*}
 \phi_i^{jj'}\colon \E_i\otimes \E^{\vee}_j(-1) \to \E_i\otimes\E_{j'}(6),
\end{equation*}
such that the matrix $(\phi_i^{jj'})$ defines morphism $\phi$ in the exact sequence \eqref{eq_es_for_Iw}.
Note that each component $\phi_i^{jj'}$ is induced by the component $\Phi_{jj'}$ of $\Phi$.
This decomposition allows us to describe properties of the morphism between cohomologies of $S^2\E(6)$ and $\spl(E)(-1)$ induced by $\phi$:
\begin{equation*}
 D\colon H^1(\p^3,\spl(\E)(-1))\to H^1(\p^3,S^2\E(6)).
\end{equation*}
\begin{lemma}\label{lemma_k=1_or_2}
Let $D$ be the differential induced by $\phi$. 
\begin{enumerate}
 \item[$(1)$] If $k=1$, then $\dim(\Coker(D))\geqslant6+m_4-m_3$;
 \item[$(2)$] If $k=2$ and $m_3=4$, then $\Coker(D)\ne 0$.
 \item[$(3)$] If $k>0$ and $m_2=1$, then $\dim(\Coker(D))\geqslant 4$.
\end{enumerate}
\end{lemma}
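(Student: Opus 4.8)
The plan is to make the differential $D$ completely explicit as a matrix of blocks, using the decomposition \eqref{eq_e_i} of $\E$ together with the cohomology groups listed in Lemma~\ref{lemma_cohomologies_F_i}. Write
\begin{equation*}
 H^1(\p^3,\spl(\E)(-1)) = S_1\oplus S_2, \qquad H^1(\p^3,S^2\E(6)) = T_1\oplus T_2,
\end{equation*}
with $S_1 = W\otimes W^{\vee}\otimes V$, $S_2 = W\otimes U_3^{\vee}$, $T_1 = S^2 W\otimes\Lambda^2 V^{\vee}$ and $T_2 = W\otimes U_4$. Tracing which $\phi_i^{jj'}$ contribute, $S_1$ and $S_2$ sit in $H^1(\E_1\otimes\E_1^{\vee}(-1))$ and $H^1(\E_1\otimes\E_3^{\vee}(-1))$, while $T_1 = H^1(S^2\E_1(6))$ and $T_2 = H^1(\E_1\otimes\E_4(6))$. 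Hence $D$ is induced by $\Phi_{11}$ on $S_1\to T_1$, by $\Phi_{31}=\Phi_{13}^t$ on $S_2\to T_1$, and by $\Phi_{34}=\Phi_{43}^t$ on $S_2\to T_2$, whereas the block $S_1\to T_2$ vanishes because $\Phi_{14}=\Phi_{41}^t=0$, since $H^0(\p^3,\E_1\otimes\E_4(7))=H^0(\p^3,\Omega_{\p^3}^1(1))^{\oplus km_4}=0$ (cf. the proof of Lemma~\ref{lemma_cond_on_m_4}). Thus $D$ is block upper-triangular,
\begin{equation*}
 D = \begin{pmatrix} D_{11} & D_{12} \\ 0 & D_{22} \end{pmatrix},
\end{equation*}
so that $\dim\Coker(D) = \dim\Coker(D_{22}) + \dim\bigl(T_1/(\Image(D_{11})+D_{12}(\Ker D_{22}))\bigr)$.

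Next I would analyze $D_{11}\colon W\otimes W^{\vee}\otimes V\to S^2 W\otimes\Lambda^2 V^{\vee}$, which is the crux. Two facts drive everything. First, by the Cauchy decomposition $S^2\E_1(7) = (S^2 W\otimes S^2\Omega_{\p^3}^1(3))\oplus(\Lambda^2 W\otimes\Lambda^2\Omega_{\p^3}^1(3))$, and since $H^0(\p^3,S^2\Omega_{\p^3}^1(3))=0$ (a Bott/Euler-characteristic computation, using $\chi(S^2\Omega_{\p^3}^1(3))=0$ and vanishing of the higher cohomology), the symmetric form $\Phi_{11}$ lies in $\Lambda^2 W\otimes V$, where I identify $H^0(\p^3,\Lambda^2\Omega_{\p^3}^1(3)) = H^0(\p^3,T_{\p^3}(-1))\cong V$. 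In particular $\Phi_{11}=0$ whenever $k=1$. Second, the induced $W$-action $W\otimes W^{\vee}\otimes\Lambda^2 W\to S^2 W$ sends $\mathrm{Id}_W$ to zero (the identity acts as the identity on $\Lambda^2 W$, whose image in $S^2 W$ is zero), so $\mathrm{Id}_W\otimes V\subseteq\Ker(D_{11})$ and $\dim\Ker(D_{11})\geq 4$. For $k=2$ the line $\Lambda^2 W$ forces $\Phi_{11}=\omega_0\otimes u_0$ to be a pure tensor, and $D_{11}$ restricted to $\spl(W)\otimes V$ factors as $(\spl(W)\xrightarrow{\ \sim\ }S^2 W)\otimes\nu_{u_0}$, where $\nu_{u_0}\colon V\to\Lambda^2 V^{\vee}$ is the contraction of the multiplication $\nu\colon V\otimes V\to\Lambda^2 V^{\vee}$. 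As $\Hom_{\SL(V)}(V\otimes V,\Lambda^2 V^{\vee})$ is one-dimensional and realized by antisymmetrization, $\nu$ is antisymmetric, so $\Image(\nu_{u_0}) = u_0\wedge V$ has dimension $\leq 3$ and $\mathrm{rk}(D_{11})\leq 3\cdot 3 = 9$. Finally, when $m_2=1$ Lemma~\ref{lemma_cond_on_m_4} gives that $\Phi_{43}$, hence $\Phi_{34}\colon U_3^{\vee}\to U_4$, has maximal rank, so $D_{22}$ is surjective.

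With these in hand the three claims follow by bookkeeping. For $(1)$, $k=1$ forces $D_{11}=0$, and the triangular formula gives $\dim\Coker(D)\geq(\dim T_2-r)+\dim T_1-(\dim S_2-r)=6+m_4-m_3$, with $r=\mathrm{rk}(D_{22})$. For $(3)$, $m_2=1$ makes $D_{22}$ surjective, and plugging $\dim T_1 = 3k(k+1)$, $\dim\Ker(D_{22}) = k(m_3-m_4)$, the relation $m_3-m_4 = 3-k$ from Lemma~\ref{conditions}$(3)$ and $\mathrm{rk}(D_{11})\leq 4k^2-4$ into the triangular formula yields $\dim\Coker(D)\geq 4k^2-\mathrm{rk}(D_{11})\geq 4$. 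For $(2)$, Lemma~\ref{conditions} reduces $k=2,\ m_3=4$ to $m_4=3m_2$: if $m_2\geq 1$ then $m_4\geq 3$, so $\dim(T_1\oplus T_2)\geq 24$ while $\mathrm{rk}(D)\leq 24-\dim\Ker(D_{11})\leq 20$, forcing $\Coker(D)\neq 0$; if $m_2=0$ then $m_4=0$, $T_2=0$, and $\mathrm{rk}(D)\leq\mathrm{rk}(D_{11})+\mathrm{rk}(D_{12})\leq 9+8 = 17 < 18 = \dim T_1$, again forcing $\Coker(D)\neq 0$.

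The main obstacle is the sharp estimate $\mathrm{rk}(D_{11})\leq 9$ for $k=2$: the crude bound from $\dim\Ker(D_{11})\geq 4$ only yields $12$, which is not enough to beat $\dim T_1=18$ in the worst case $m_2=m_4=0$. Overcoming it needs both the vanishing $H^0(\p^3,S^2\Omega_{\p^3}^1(3))=0$ (so that $\Phi_{11}$ is a pure tensor precisely when $k=2$) and the representation-theoretic input that $\nu$ is antisymmetric; the blocks $D_{12}$ and $D_{22}$, by contrast, only ever enter through crude dimension bounds.
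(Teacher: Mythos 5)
Your proposal is correct and takes essentially the same route as the paper: both identify $D$ as the block upper-triangular matrix coming from the decomposition \eqref{eq_e_i}, use the vanishing $\Phi_{14}=0$ and the identification $\Phi_{11}\in\Lambda^2W\otimes V$ (the paper via the explicit formula \eqref{eq_phi_111}, you via the equivalent representation-theoretic antisymmetry of the pairing $V\otimes V\to\Lambda^2V^{\vee}$) to get $D_{11}=0$ for $k=1$, $\mathrm{rk}(D_{11})\leqslant 9$ for $k=2$, and $\mathrm{Id}_W\otimes V\subseteq\Ker(D_{11})$, and then conclude by dimension counting. The only differences are cosmetic bookkeeping: in case $(3)$ the paper converts $\dim\Ker(D)\geqslant 4$ into $\dim\Coker(D)\geqslant 4$ by noting the source and target have equal dimension, where you instead use surjectivity of $D_{22}$, and your case split on $m_2$ in part $(2)$ is unnecessary since $\mathrm{rk}(D)\leqslant 9+km_3=17<18\leqslant\dim H^1(\p^3,S^2\E(6))$ works uniformly.
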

\begin{proof}
 By Lemma \ref{lemma_cohomologies_F_i} and the decomposition of $\phi$ we see that $D$ is the following linear map:
\begin{equation*}
 D =  \begin{pmatrix} \phi^{11}_1 & \phi^{31}_1 \\ \phi^{14}_1 & \phi^{34}_1 \end{pmatrix} 
 \colon \left(W\otimes W^{\vee} \otimes V\right)\ \oplus\ \left(W\otimes U_3^{\vee}\right) \to \left(S^2W\otimes \Lambda^2 V\right) \oplus \left(W\otimes U_4\right).
\end{equation*}
Maps $\phi_1^{ij}$ are induced by the components $\Phi_{ij}$ of $\Phi$. By construction
\begin{equation}\label{eq_Phi_ij}
\begin{split}
\Phi_{14}\in  H^0(\p^3,\E_1\otimes\E_4(7))& = W\otimes U_4 \otimes H^0(\p^3,\Omega^1_{\p^3}(1))=0,\\
 \Phi_{11} \in H^0(\p^3,S^2\E_1\otimes\OP_{\p^3}(7)) &=H^0(\p^3, S^2(\Omega^1(-2)\otimes W)\otimes\OP_{\p^3}(7)) = \Lambda^2 W\otimes  V.
 \end{split}
\end{equation}
Thus the kernel of the restriction $D|_{W\otimes W^{\vee}\otimes V}$ coincides with the $\Ker(\phi_1^{11})$.
Moreover, if we fix $\Phi_{11} = \sum_{i=1}^l w'_i\wedge w''_i \otimes v_i$, then for any element $\xi\otimes v\in \Hom(W,W)\otimes V = W\otimes W^{\vee}\otimes V$
we have the following
\begin{equation}\label{eq_phi_111}
 \phi_1^{11}(\xi\otimes v) = \sum_{i=1}^l (\xi(w'_i) w''_i - \xi(w''_i)w'_i)\otimes (v\wedge v_i).
\end{equation}
If $k=1$ then by \eqref{eq_Phi_ij} we have that $D|_{W\otimes W^{\vee}\otimes V} = 0$, then
\begin{equation*}
 \dim(\Coker(D)) \geqslant \dim(H^1(\p^3,F_0)) - \dim(W\otimes U_3^{\vee}) = 6+m_4-m_3.
\end{equation*}
If $k = 2$ and $m_3 = 4$, then $\Phi_{11} = w'\wedge w'' \otimes v_1$ and by \eqref{eq_phi_111} 
for any $\xi\otimes v\in \Hom(W,W)\otimes V$ the image $\phi_1^{11}(\xi\otimes v)$ is a multiple  
of $v_1$. Thus, by \eqref{eq_Phi_ij} we get that $\dim(D(W\otimes W^{\vee})\otimes V)\leqslant 9$.
Then
\begin{equation*}
 \dim(D(H^1(\p^3,F_1))) \leqslant 9+ km_3 = 17 < 18  = \dim(S^2W\oplus \Lambda^2V) \leqslant \dim(H^0(\p^3,F_1)),
\end{equation*}
and $\Coker(D)$ is non-zero.

If $m_2 = 1$ and $k\ne 0$, then by \eqref{eq_phi_111} the element $\mathrm{Id}_W\otimes v \in \Hom(W,W)\otimes V$ is in $\Ker(D)$ for any $v\in V$.
Thus, $\Ker(D)$ is at least of dimension $4$. Moreover, in this case
\begin{equation*}
 \dim(H^1(\p^3,\spl(\E)(-1))) = \dim(H^1(\p^3,S^2\E(6))).
\end{equation*}
Thus, $\dim(\Ker(D) = \dim(\Coker(D))$ and we get the result.
\end{proof}

Now we are ready to prove Proposition \ref{prop_techlemma}.
\begin{proof}[Proof of Proposition \ref{prop_techlemma}.]
Consider the resolution $F_{\bullet} \to \I_w(5)$ in \eqref{eq_es_for_Iw} and the spectral sequence
$E^{pq}_1 = H^p(\p^3, F_q)\Rightarrow H^{p-q}(\p^3,\I_w(5))$:
\begin{equation*}
 \xymatrix{ \Lambda^2 U_2^{\vee} &0&0\\
 W^{\vee}\otimes U_2^{\vee}\ar@{-->}[rrd]^{\hspace{20pt}} & 0 & 0 \\
 S^2W^{\vee} \ar[r]^{\hspace{40pt}} \ar@{-->}[rrd] & \left( V\otimes  W\otimes W^{\vee}\right)\oplus \left(W\otimes U_3^{\vee}\right) \ar[r]^{D} &  
\left(\Lambda^2 V\otimes S^2 W\right) \oplus \left( W\otimes U_4\right)\\
 H^0(\p^3,F_2) \ar[r]& H^0(\p^3,  F_1) \ar[r] 
 & H^0(\p^3,F_0) 
 }
\end{equation*}
This spectral sequence converges to cohomologies of $\I_w(5)$. By assumption the defect of the even set of nodes $w$ vanishes;
thus by Lemma \ref{lemma_d=h^1} we get that 
\begin{equation}\label{eq_d=0_implies}
 E^{i,j}_{\infty}=0 \text{ for all } i\ne j.
\end{equation}
In particular, this implies that
$ 0 = E^{3,2}_{\infty} = E^{3,2}_1 = \Lambda^2 U_2$;
thus, $m_2$ equals either 0 or 1. We are going to show that in all cases except ones mentioned in Proposition \ref{prop_techlemma}
we get a contradiction with \eqref{eq_d=0_implies} or with assumption of the proposition.

First exclude the case $m_2=0$ and $k=1$. By the third assertion of Lemma \ref{conditions} and the first assertion of 
Lemma \ref{lemma_k=1_or_2} we have a contradiction with \eqref{eq_d=0_implies}:
\begin{equation*}
 \dim(E^{1,0}_{\infty})\geqslant \dim(\Coker(D)) - \dim(E^{2,2}_1) = 6+m_4-m_3-m_2 = k= 1.
\end{equation*}

If $m_2=0$ and $k=2$, then by the second and the third assertions of Lemma \ref{conditions} we have that~\mbox{$m_3 = 4$}.
By Lemma \ref{lemma_k=1_or_2} we get that $\dim(\Coker(D))\ne 0$, then we have a contradiction with \eqref{eq_d=0_implies}:
\begin{equation*}
 \dim(E^{1,0}_{\infty})\geqslant \dim(\Coker(D)) - \dim(E^{2,2}_1) > - km_2 = 0.
\end{equation*}
Thus, we showed that if $m_2=d(w)=0$, then either $k=0$ or $k>3$. 

If $m_2 = 1$ and $0<k<4$, then by third assertion of Lemma \ref{lemma_k=1_or_2} we get~\mbox{$\dim(\Coker(D))\geqslant 4$.}
This implies the contradiction to \eqref{eq_d=0_implies}:
\begin{equation*}
  0=\dim(E^{1,0}_{\infty})\geqslant \dim(\Coker(D)) - \dim(E^{2,2}_1) \geqslant 4-k >0.
\end{equation*}
Finally, if $m_2=1$ and $k\geqslant 4$, then by the third assumption of Lemma \ref{conditions} we have
\begin{equation*}
 m_4 = m_3+k-3 > m_3.
\end{equation*}
In view of Lemma \ref{lemma_cond_on_m_4}, this contradicts the irreducibility of $B$,
so this case is also excluded.

Thus, if $m_2=1$ and $d(w) = 0$, then $k=0$. This finishes the prove. 
\end{proof}

Now let us provide here some useful computations:
\begin{lemma}\label{lemma_computation_of_d}
Assume that $w$ is a $\delta/2$-even set of nodes on a nodal sextic surface $B$, and~$\E$ is its Casnati-Catanese bundle. 
\begin{enumerate}
 \item If $\delta = 0$ and  $\E= \OP_{\p^3}(-2)^{\oplus 3}$ or $\E =\Omega_{\p^3}^1(-1)\oplus \OP_{\p^3}(-2)$, then $d(w) = 0$;
 \item If $\delta = 1$ and $\E = \OP_{\p^3}(-3)^{\oplus 3}\oplus\OP_{\p^3}(-2)$ or $\E =  \OP_{\p^3}(-3)^{\oplus 6}$, then $d(w) = 0$.
\end{enumerate}
\end{lemma}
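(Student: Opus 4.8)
The plan is to compute $d(w) = h^1(\p^3,\I_w(5))$ via Lemma~\ref{lemma_d=h^1} and the locally free resolution of Lemma~\ref{es_for_Iw}, namely $0\to F_2\to F_1\to F_0\to\I_w(5)\to 0$ with $F_0 = S^2\E(5+\delta)$, $F_1 = \spl(\E)(-1)$ and $F_2 = \Lambda^2\E^\vee(-7-\delta)$. As in the proof of Proposition~\ref{prop_techlemma} I would run the hypercohomology spectral sequence $E_1^{pq} = H^p(\p^3,F_q)\Rightarrow H^{p-q}(\p^3,\I_w(5))$. Since every page $E_{r+1}$ is a subquotient of $E_r$, a term that vanishes on $E_1$ stays zero; hence to conclude $d(w)=0$ it suffices to show that the three groups on the antidiagonal $p-q=1$, that is $H^1(\p^3,F_0)$, $H^2(\p^3,F_1)$ and $H^3(\p^3,F_2)$, already vanish on $E_1$, so that no differential is needed.

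For the two $\delta=1$ bundles this is immediate from Lemma~\ref{lemma_cohomologies_F_i}. For $\E=\OP_{\p^3}(-3)^{\oplus 3}\oplus\OP_{\p^3}(-2)$ one substitutes $W=0$, $\dim U_2=1$, $\dim U_3=3$, $U_4=0$, and for $\E=\OP_{\p^3}(-3)^{\oplus 6}$ one substitutes $W=U_2=U_4=0$, $\dim U_3=6$, into the description coming from \eqref{eq_form_of_CC_bundle}. In both cases every summand listed in $H^1(S^2\E(6))$, $H^2(\spl(\E)(-1))$ and $H^3(\Lambda^2\E^\vee(-8))$ contains a factor $W$, $U_4$ or $\Lambda^2 U_2$, all of which vanish here (recall $U_2$ is at most one-dimensional). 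Thus the antidiagonal vanishes and $d(w)=0$ in these two cases.

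For the $\delta=0$ bundles, which are not of the form~\eqref{eq_form_of_CC_bundle}, I would compute the needed cohomology directly from Bott's formula and the Euler sequence. When $\E=\OP_{\p^3}(-2)^{\oplus 3}$ all three terms are sums of line bundles, $F_0=\OP_{\p^3}(1)^{\oplus 6}$, $F_1=\OP_{\p^3}(-1)^{\oplus 8}$ and $F_2=\OP_{\p^3}(-3)^{\oplus 3}$; the last two are acyclic and $F_0$ has cohomology only in degree $0$, so the antidiagonal vanishes trivially. When $\E=\Omega^1_{\p^3}(-1)\oplus\OP_{\p^3}(-2)$ one finds $F_0=S^2\Omega^1_{\p^3}(3)\oplus\Omega^1_{\p^3}(2)\oplus\OP_{\p^3}(1)$; the bundle $F_1=\spl(\E)(-1)$ has the same cohomology as $(\Omega^1_{\p^3}\otimes T_{\p^3})(-1)\oplus\Omega^1_{\p^3}\oplus T_{\p^3}(-2)\oplus\OP_{\p^3}(-1)$ (the trace line bundle being acyclic); and $F_2=\Omega^1_{\p^3}(-1)\oplus T_{\p^3}(-4)$. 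Using $T_{\p^3}(-4)\cong\Omega^2_{\p^3}$, Bott's formula gives $H^3(\p^3,F_2)=0$, and resolving $(\Omega^1_{\p^3}\otimes T_{\p^3})(-1)$ by the Euler sequence gives $H^2(\p^3,F_1)=0$.

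The one genuinely non-formal point, which I expect to be the main obstacle, is the vanishing of $H^1(\p^3,F_0)=H^1(\p^3,S^2\Omega^1_{\p^3}(3))$, since $S^2\Omega^1_{\p^3}$ is not of the form $\Omega^p_{\p^3}$ and Bott's formula does not apply to it. I would first check $\chi(S^2\Omega^1_{\p^3}(3))=0$ and $H^2=H^3=0$ (e.g. from the filtration of the $S^2$ of the Euler sequence), which forces $h^0=h^1$; it then remains to prove $H^0(\p^3,S^2\Omega^1_{\p^3}(3))=0$. For this I would use the splitting $\Omega^1_{\p^3}\otimes\Omega^1_{\p^3}=S^2\Omega^1_{\p^3}\oplus\Omega^2_{\p^3}$: tensoring the Euler sequence by $\Omega^1_{\p^3}(3)$ and noting that the multiplication map $V^\vee\otimes H^0(\p^3,\Omega^1_{\p^3}(2))\to H^0(\p^3,\Omega^1_{\p^3}(3))$ is nonzero and hence (being equivariant between the relevant irreducible $\GL(V)$-pieces) surjective, one gets $h^0(\Omega^1_{\p^3}\otimes\Omega^1_{\p^3}(3))=4=h^0(\Omega^2_{\p^3}(3))$ by Bott; comparing the two summands yields $H^0(S^2\Omega^1_{\p^3}(3))=0$, as desired. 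Once this computation for the $40$-node bundle is in place all four cases follow, the other three being essentially bookkeeping on the $E_1$-page.
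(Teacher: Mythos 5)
Your proposal is correct and follows essentially the same route as the paper: substitute each bundle into the resolution \eqref{eq_es_for_Iw}, run the hypercohomology spectral sequence, and check that $H^1(F_0)$, $H^2(F_1)$, $H^3(F_2)$ all vanish (the paper cites Lemma~\ref{lemma_cohomologies_F_i} for the $\delta=1$ cases and records the $\delta=0$ computations in an explicit table of cohomology groups). The only difference is that you spell out the vanishing of $H^\bullet(\p^3,S^2\Omega^1_{\p^3}(3))$, which the paper simply asserts in its table; your argument for it is valid.
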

\begin{proof}
 Let us start with a case $ \E = \OP_{\p^3}(-2)^{\oplus 3}$. Substituting it into 
the exact sequence \eqref{eq_es_for_Iw}, we get the following:
\begin{equation*}
  0 \to \OP_{\p^3}(-3)^{\oplus 3}\to \OP_{\p^3}(-1)^{\oplus 8} \to \OP_{\p^3}(1)^{\oplus 6} \to \I_w(5) \to 0.
\end{equation*}
Therefore, we get $h^1(\p^3,\I_w(5))= d(w)=0$ by Lemma \ref{lemma_d=h^1}.

If $\E=\Omega_{\p^3}^1(-1)\oplus \OP_{\p^3}(-2)$, then the resolution \eqref{eq_es_for_Iw} of $\I_w(5)$ is as follows:
\begin{multline*}
 0\to \Lambda^2T_{\p^3}(-5) \oplus T_{\p^3}(-4) \to T_{\p^3}\otimes\Omega_{\p^3}^1(-1)\oplus T_{\p^3}(-2) \oplus \Omega_{\p^3}^1 \to \\ \to
 S^2\Omega_{\p^3}^1(3) \oplus \Omega_{\p^3}^1(2) \oplus \OP_{\p^3}(1) \to \I_w(5) \to 0.
\end{multline*}
Then in view of the following table of cohomologies we get that $h^1(\p^3,\I_w(5))=d(w)=0$ by Lemma \ref{lemma_d=h^1}.
 \begin{center}
 \begin{tabular}{|c|c|c|c|c|}
 \hline
 $F$ & $H^0(\p^3,F)$ & $H^1(\p^3,F)$ & $H^2(\p^3,F)$ & $H^3(\p^3,F)$ \\
 \hline
 \hline
 $\Lambda^2 T_{\p^3}(-5)$ & $0$ &$0$ &$\det(V)$ &$0$ \\
 \hline
 $T_{\p^3}(-4)$ & $0$ &$0$ &$\det(V) 	$ &$0$ \\
 \hline
 \hline
 $T_{\p^3}\otimes\Omega_{\p^3}^1(-1)$ & $0$ &$V$ &$0$ &$0$ \\
  \hline
 $T_{\p^3}(-2)$ & $0$ &$0$ &$0 	$ &$0$ \\
 \hline
 $\Omega_{\p^3}^1$ & $0$ &$\mathbb{C}$ &$0$ &$0$ \\
 \hline
 \hline
 $S^2\Omega_{\p^3}^1(3)$  & $0$ & $0$ &$0$ &$0$ \\
 \hline
 $\Omega_{\p^3}(2)$ & $\Lambda^2V^{\vee}$ &$0$ &$0$ &$0$ \\
 \hline
 $\OP_{\p^3}(1)$ & $V^{\vee}$ &$0$ &$0$ &$0$ \\
 \hline
\end{tabular}
\end{center}

Finally, consider bundles $\E = \OP_{\p^3}(-3)^{\oplus 3}\oplus\OP_{\p^3}(-2)$ or $ \OP_{\p^3}(-3)^{\oplus 6}$. 
By Lemma~\ref{lemma_cohomologies_F_i} we see that $h^1(\p^3,\I_w(5))=0$ in both cases. So $d(w)=0$ by Lemma~\ref{lemma_d=h^1}.
\end{proof}

\section{Proof of Theorem \ref{Result}}\label{sect_proof}

In this section we finally prove Theorem~\ref{Result} and specify surfaces  such that double covers 
of $\p^3$ branched along them have a non-trivial group $T_2(\resx)$. 
First, we list all sextic surfaces containing a minimal $1/2$-even set of nodes with zero defect.

\begin{proposition}\label{prop_1_2_even_d=0}
 If $B$ is an irreducible nodal sextic surface, $w$ is a minimal $1/2$-even set of nodes on $B$  with $d(w)=0$,
 then in the notation of Proposition $\ref{description_of_E_1_2_even}$ either $(k,m_2,m_3,m_4)$ is equals  to $(0,0,6,0)$ or to $(0,1,3,0)$. 
\end{proposition}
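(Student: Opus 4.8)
The plan is to push the necessary conditions already extracted from $d(w)=0$ down to a short list of numerical types, and then to remove the spurious types using the two ingredients not yet fully exploited: the Riemann--Roch computation of the cohomology of $\F$ and the irreducibility of $B$. First I would record the starting point. Since $d(w)=0$, Proposition~\ref{prop_es_cut_by_plane} shows that $w$ is not cut out by a plane, so Proposition~\ref{description_of_E_1_2_even}, Lemma~\ref{conditions}, Lemma~\ref{lemma_m_2=1_then_inclusion} and Lemma~\ref{lemma_cond_on_m_4} are all available. By Proposition~\ref{prop_techlemma} we are in one of the cases $(m_2,k)\in\{(0,0),(0,4),(0,5),(0,6),(1,0)\}$. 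When $m_2=0$, items $(2)$ and $(3)$ of Lemma~\ref{conditions} give $m_3\le 6-k$ and $m_4=k+m_3-6\ge 0$, hence $m_3=6-k$ and $m_4=0$; so these four cases are exactly $(0,0,6,0)$, $(4,0,2,0)$, $(5,0,1,0)$, $(6,0,0,0)$. In the case $(1,0)$, item $(3)$ gives $m_4=m_3-3$. Thus it remains to pin down $m_2=1$ and to eliminate $k=4,5,6$.

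For the case $m_2=1$, $k=0$, Lemma~\ref{conditions}(1) gives $|w|=31$. By minimality (Lemma~\ref{property_of_minimality}) and the fact that $w$ is not cut out by a plane, the higher cohomology of $\F(3)$ vanishes, so $h^0(\p^3,\F(3))=\chi(\F(3))=7$ by the Riemann--Roch formula~\eqref{eq_Riemann-Roch} with $\delta=1$, $n=3$. On the other hand $h^0(\p^3,\F(2))=1$, so by Lemma~\ref{lemma_m_2=1_then_inclusion} the multiplication map $\gamma\colon H^0(\p^3,\F(2))\otimes V^\vee\to H^0(\p^3,\F(3))$ is injective, whence its image is $4$-dimensional; by Proposition~\ref{description_of_E_1_2_even} the remaining generators of $H^0(\p^3,\F(3))$ number $m_3$, so $m_3=7-4=3$ and then $m_4=0$. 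This produces exactly the type $(0,1,3,0)$.

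For the case $m_2=0$, the value $k=0$ gives $(0,0,6,0)$, where $\E=\OP_{\p^3}(-3)^{\oplus 6}$ and $B$ is a sextic symmetroid, which is on the list. The crux is to exclude $k=4,5,6$, and the only hypothesis not yet used is the irreducibility of $B$. When $m_2=0$ the morphism $\Phi$ of~\eqref{es_F} couples only the summands $\E_1=\Omega^1_{\p^3}(-2)\otimes W$ and $\E_3=\OP_{\p^3}(-3)\otimes U_3$; by the computation~\eqref{eq_Phi_ij} its block $\Phi_{11}$ lies in $\Lambda^2 W\otimes V$, because the vanishing $H^0(\p^3,S^2\Omega^1_{\p^3}(3))=0$ kills the part of $\Phi_{11}$ symmetric in the $\Omega^1$-factor. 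Hence the restriction of the symmetric form $\Phi$ to $\E_1$ is antisymmetric both in the $W$-index and in the $\Omega^1$-index. The plan is to exploit that $\mathrm{rk}\,\Omega^1_{\p^3}(-2)=3$ is odd: such a doubly antisymmetric block is forced to degenerate, and — controlling also the coupling blocks $\Phi_{13}$ and $\Phi_{33}$ — I would show that $\det\Phi$ cannot be a reduced irreducible sextic (it vanishes identically, or factors, or is a square). This contradicts that $B=\{\det\Phi=0\}$ is an irreducible nodal sextic, and is the exact analogue for $m_2=0$ of the factorisation argument of Lemma~\ref{lemma_cond_on_m_4}. Finally, the two surviving types $(0,0,6,0)$ and $(0,1,3,0)$ are precisely the bundles $\OP_{\p^3}(-3)^{\oplus 6}$ and $\OP_{\p^3}(-2)\oplus\OP_{\p^3}(-3)^{\oplus 3}$ of Lemma~\ref{lemma_computation_of_d}(2), for which $d(w)=0$ indeed holds.

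The step I expect to be the main obstacle is precisely this last degeneracy analysis. For $k=6$ the morphism is purely the doubly antisymmetric $\Phi_{11}$ on a summand of odd rank, and the degeneration is transparent; the difficulty lies in the mixed cases $k=4,5$, where $\Phi_{33}$ is a symmetric $(6-k)\times(6-k)$ matrix of linear forms and $\Phi_{13}\ne 0$, and one must rule out that this admixture restores genericity and yields an irreducible sextic. Establishing that the sextic determinant still degenerates or factors in these mixed cases is the technical heart of the argument.
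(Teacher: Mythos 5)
Your reduction to the five types $(k,m_2)\in\{(0,0),(4,0),(5,0),(6,0),(0,1)\}$ and your treatment of the case $m_2=1$ are fine. In fact your route through the $m_2=1$ case is a legitimate variant of the paper's: you compute $m_3=h^0(\p^3,\F(3))-\dim\operatorname{im}(\gamma)=7-4=3$ directly from Riemann--Roch and the injectivity of $\gamma$, and then get $m_4=0$ from Lemma~\ref{conditions}(3); the paper instead runs a Koszul spectral sequence to prove first that $m_4=0$ and then deduces $m_3=3$. Both arguments rest on Lemma~\ref{lemma_m_2=1_then_inclusion}, and yours is arguably shorter.

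The genuine gap is the exclusion of $k=4,5,6$. Your key claim --- that because $\Phi_{11}$ lies in $\Lambda^2W\otimes\Lambda^2\Omega^1_{\p^3}(3)$ and $\operatorname{rk}\Omega^1_{\p^3}=3$ is odd, the block must degenerate --- is false as a pointwise linear-algebra statement. The parity argument $\det(\alpha\otimes\beta)=(\det\alpha)^{\dim B}(\det\beta)^{\dim A}=0$ applies only to a single decomposable tensor $\alpha\otimes\beta$, whereas a general element of $\Lambda^2A\otimes\Lambda^2B\subset S^2(A\otimes B)$ is a sum of such tensors and can be nondegenerate even when $\dim B=3$: for $\dim A=\dim B=3$ the element $a_1\wedge a_2\otimes b_1\wedge b_2+a_2\wedge a_3\otimes b_2\wedge b_3+a_3\wedge a_1\otimes b_3\wedge b_1$ defines an invertible symmetric form on the nine-dimensional space $A^*\otimes B^*$ (it splits into an invertible $3\times3$ block and three hyperbolic $2\times2$ blocks). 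So even the case $k=6$, which you call transparent, is not established, and you explicitly defer $k=4,5$. The paper eliminates these cases by a completely different and much softer argument: with $m_2=m_4=0$ the module $M$ is generated by $H^0(\p^3,\F(3))$, so $H^0(\p^3,\F(3))\otimes V^{\vee}\twoheadrightarrow H^0(\p^3,\F(4))$; combined with Lemma~\ref{lemma_RR} this gives $4m_3\geqslant m_3+12$, hence $m_3\geqslant 4$, hence $k=6-m_3\leqslant 2$, which together with $k\in\{0,4,5,6\}$ from Proposition~\ref{prop_techlemma} forces $k=0$. You should replace your determinantal degeneration sketch with an argument of this kind (or actually prove a degeneration statement, which would require substantially more than the odd-rank heuristic).
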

\begin{proof}
Since $d(w)=0$ by Proposition \ref{prop_es_cut_by_plane} the even set of nodes $w$ is not cut out by a plane. 
Proposition \ref{prop_techlemma} implies that either $m_2=0$ or $1$.

We start with the case $m_2=0$.  
By the second and the third assertions of Lemma \ref{conditions} we have that~\mbox{$m_4=0$}.
Thus, $H^0(\p^3,\F(3))$ generates the graded $R$-module $M$ defined in \eqref{M}. In particular, we have a surjective morphism
\begin{equation*}
 \varepsilon\colon H^0(\p^3,\F(3))\otimes V^{\vee} \twoheadrightarrow H^0(\p^3,\F(4)).
\end{equation*}
This implies that $4h^0(\p^3,\F(3))\geqslant h^0(\p^3,\F(4))$. Then by Lemmas \ref{conditions} and \ref{lemma_RR} we have that 
\begin{equation*}
  4m_3  = 4h^0(\p^3,\F(3))  \geqslant h^0(\p^3,\F(4)) = h^0(\p^3,\F(3))+12 = m_3+12.
\end{equation*}
Thus, $m_3 \geqslant 4$; then, by the second assertion of Lemma \ref{conditions} we get that $k\leqslant2$.
Then Proposition \ref{prop_techlemma} implies that~\mbox{$k=0$}. Thus, the only  possible collection $(k,m_2,m_3,m_4)$ is~\mbox{$(0,0,6,0)$}. By 
Lemma \ref{lemma_computation_of_d} in this case $d(w)$ is zero.

Now assume $m_2=1$. By Proposition \ref{prop_techlemma} we get that $k=0$. 
Consider the Koszul exact sequence tensored by $\F$ (see Notation \ref{def_of_F}). It is the following exact sequence on $\p^3$:
\begin{equation*}
 0\to \F \to \F(1)\otimes V \to \F(2)\otimes \Lambda^2V^{\vee} \to \F(3)\otimes V^{\vee} \to \F(4) \to 0. 
\end{equation*}
The spectral sequence of cohomologies~\mbox{$E_1^{p,q} = H^p(\p^3,\F(q))\otimes \Lambda^q V$} is as follows:
\begin{equation*}
 \xymatrix{
 H^2(\F) \ar[r]^{\rho\ \ \ \ } \ar@{-->}[ddrrr]^{D'} &  H^2(\F(1))\otimes V & 0 & 0& 0\\
 0 & 0 & 0& 0& 0 \\
  0 & 0 & H^0(\F(2))\otimes \Lambda^2 V^{\vee}\ar[r]& H^0(\F(3))\otimes V^{\vee} \ar[r]^{\ \ \ \varepsilon} &  H^0(\F(4)) 
 }
\end{equation*}
By \eqref{eq_Serre_duality} the map $\rho$ is the Serre dual to the canonical morphism $\gamma$ in Lemma \ref{lemma_m_2=1_then_inclusion}. 
Since~$\gamma$ is an embedding,~$\rho$ is surjective and $E_2^{2,1}  = \Coker(\rho)= 0$. Then $D'$ is an only non-zero
differential on pages $E_i$ for $i>1$.
We started with the exact sequence; thus, the spectral sequence converges to zero. In particular, we get the following equality:

\begin{equation*}
 0=E_{\infty}^{0,4} = E_2^{0,4}=\mathrm{Coker}(\varepsilon). 
\end{equation*}
Thus, the graded~$R$-module $M$ is generated by components $H^0(\p^3,\F(2))$ and $H^0(\p^3,\F(3))$. 
This implies $m_4 = 0$. Then, by the third assertion of Lemma \ref{conditions}, we have $m_3=3$.
Thus, it is the case  $(k,m_2,m_3,m_4)=(0,1,3,0)$ and by Lemma \ref{lemma_computation_of_d} the defect $d(w)$ vanishes.
\end{proof}

Now we specify all $0$-even sets of nodes with zero defect.
\begin{proposition}\label{description_0_of_d(w)=0}
 Assume that $B$ is a nodal sextic surface and $w$ is a $0$-even set of nodes on $B$ such that~\mbox{$d(w) = 0$}. 
 Then~\mbox{$|w|=32$} or~$40$, and the Casnati--Catanese bundle of $w$ is isomorphic to $\OP_{\p^3}(-2)^{\oplus 3}$ or~\mbox{$\Omega_{\p^3}^1(-1)\oplus \OP_{\p^3}(-2)$}
 respectively.
\end{proposition}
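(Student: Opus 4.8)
The plan is to invoke the classification of Casnati--Catanese bundles of $0$-even sets on sextics and then eliminate the two ``extreme'' cases by a defect computation. By Theorem~\ref{0-even_sets} the bundle $\E$ of $w$ must be one of the four types $(a)$--$(d)$, with $|w|=24,32,40,56$ respectively. For types $(b)$ and $(c)$ Lemma~\ref{lemma_computation_of_d} already gives $d(w)=0$, so these are exactly the two cases compatible with the hypothesis and they produce the asserted bundles $\OP_{\p^3}(-2)^{\oplus 3}$ and $\Omega^1_{\p^3}(-1)\oplus\OP_{\p^3}(-2)$. It therefore remains to show that types $(a)$ and $(d)$ force $d(w)>0$.

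First I would treat type $(a)$, where $\E=\OP_{\p^3}(-1)\oplus\OP_{\p^3}(-2)$, directly from the resolution \eqref{eq_es_for_Iw} with $\delta=0$. Here $\Lambda^2\E^{\vee}(-7)=\OP_{\p^3}(-4)$, the term $S^2\E(5)=\OP_{\p^3}(3)\oplus\OP_{\p^3}(2)\oplus\OP_{\p^3}(1)$ has cohomology concentrated in degree $0$, and $\spl(\E)(-1)=\OP_{\p^3}\oplus\OP_{\p^3}(-1)\oplus\OP_{\p^3}(-2)$ has one-dimensional $H^0$ and no higher cohomology. Splitting \eqref{eq_es_for_Iw} into two short exact sequences and chasing cohomology, the only surviving higher cohomology originates from $H^3(\p^3,\OP_{\p^3}(-4))\cong\mathbb{C}$, which propagates to $h^1(\p^3,\I_w(5))=1$. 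By Lemma~\ref{lemma_d=h^1} this means $d(w)=1>0$, so type $(a)$ is incompatible with $d(w)=0$.

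For type $(d)$, with $|w|=56$, I would avoid a direct computation and instead reduce to the previous case. By Remark~\ref{rmk_1} such a set $w$ contains a $0$-even subset $w'$ of $24$ nodes whose Casnati--Catanese bundle is of type $(a)$. Hence $d(w')=1$ by the computation above, and Lemma~\ref{defect_of_subsets} gives $d(w)\geqslant d(w')=1>0$, so type $(d)$ is also excluded. Combining the four cases, under the hypothesis $d(w)=0$ only types $(b)$ and $(c)$ survive, which is exactly the statement.

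I expect the main obstacle to be the bookkeeping in the type $(a)$ cohomology chase: one must verify that the class in $H^3(\p^3,\Lambda^2\E^{\vee}(-7))$ genuinely survives to $H^1(\p^3,\I_w(5))$ rather than being cancelled against $H^0(\p^3,S^2\E(5))$. This follows from the sparsity of the cohomology of the three terms of the resolution, since the surviving class sits in a total degree where no differential can connect it to the degree-zero part; everything else is a routine twist-by-twist computation of cohomology of line bundles on $\p^3$.
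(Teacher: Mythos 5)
Your proof is correct, and its skeleton (classify via Theorem~\ref{0-even_sets}, keep types $(b)$ and $(c)$ by Lemma~\ref{lemma_computation_of_d}, exclude $(a)$ and $(d)$) matches the paper; the difference lies in how the two excluded cases are handled. For $|w|=24$ the paper simply cites Endrass (\cite[Lemma 3.1]{Endrass99}), whereas you run the resolution \eqref{eq_es_for_Iw} directly: your bookkeeping checks out --- $\Lambda^2\E^{\vee}(-7)=\OP_{\p^3}(-4)$ contributes its $H^3\cong\mathbb{C}$ to $E_1^{3,2}$, all potential differentials land in or come from vanishing groups ($H^3$ of $\spl(\E)(-1)$ and $H^2$ of $S^2\E(5)$ are both zero), so $h^1(\p^3,\I_w(5))=1$, consistent with the count $h^0(\p^3,\I_w(5))=33=56-24+1$. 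For $|w|=56$ the paper uses the containment of $\Sing(B)$ in the four quintic partials to get $h^0(\p^3,\I_w(5))\geqslant 4>56-|w|$, while you reduce to the $24$-node case via Remark~\ref{rmk_1} and the monotonicity of defect (Lemma~\ref{defect_of_subsets}). Both routes are valid; yours has the merit of being self-contained (no appeal to Endrass) and of reusing the machinery already set up in Section~\ref{sect_computations_of_defect}, at the cost of depending on Remark~\ref{rmk_1}, which the paper only justifies informally in Section~\ref{sect_conclusion} --- if you wanted full rigor you would either prove that remark or fall back on the paper's quintic-partials argument, which needs no input about subsets at all.
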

\begin{proof}
  By Theorem~\ref{0-even_sets} we have an explicit description of 
 $0$-even sets of nodes on nodal sextic surfaces. In this case there appear only four different 
 Casnati--Catanese bundles, which are listed in Theorem~\ref{0-even_sets}.
 
 If $|w| = 56$, then $d(w) \ne 0$, since $h^0(\OP_{\p^3}(5)) = 56$ 
 and $w$ is contained in the zero loci of the partial derivatives of the equation of $B$, which are quintics. 
 
 If $|w| = 24$, then $d(w)\ne 0$ by \cite[Lemma 3.1]{Endrass99}.
 
 If $|w| = 32$ or 40, then by Lemma \ref{lemma_computation_of_d} in both case we get that~\mbox{$d(w)=0$.}
\end{proof} 

Now we are ready to describe nodal sextic surfaces providing obstructions to rationality of double solids.
\begin{proof}[Proof of Theorem $\ref{Result}$]
 Assume that $B$ is a nodal sextic surface and $T_2(\resx)\ne 0$. By~Theorem~\ref{Clemens} it implies 
 that $\dim(\codes)>d(\Sing(B))$. Then by Corollary~\ref{codes_and_defect}
 there exists a non-trivial even set of nodes~\mbox{$w\in \codes$} such that $d(w) = 0$. 
 
 If $w$ is a $0$-even set of nodes, then by Proposition~\ref{description_0_of_d(w)=0} its Casnati--Catanese
 bundle is isomorphic to either~\mbox{$\OP_{\p^3}(-2)^{\oplus 3}$} or $\Omega_{\p^3}^1(-1)\oplus \OP_{\p^3}(-2)$.

 If $w$ is a $1/2$-even set of nodes, then we may assume that it is minimal. Otherwise, we can 
 replace it by its proper $1/2$-even subset and its defect should also vanish by Lemma~\ref{defect_of_subsets}.
 Then by Proposition~\ref{prop_1_2_even_d=0} we get the result. 
\end{proof}

Theorem~\ref{Result} gives us an explicit description of surfaces which can arise as branch surfaces of sextic
double solids with non-vanishing Artin--Mumford obstruction to rationality. However, we can not ensure that any surface 
of this type actually admits this obstruction. Nevertheless, Proposition \ref{examples} claims that a general surface of 
this type indeed admits it.

\begin{proof}[Proof of Proposition~\ref{examples}]
 Any vector bundle $\E$ from the list in Theorem~\ref{Result} except the last one is a sum of line bundles over $\p^3$. Thus,~\mbox{$S^2\E(6+\delta)$} 
 is also a sum of line bundles, and all these bundles are of positive degrees. Therefore, the 
 bundle~\mbox{$S^2\E(6+\delta)$} is generated by its global sections.
 
 If~\mbox{$\E = \Omega_{\p^3}^1(-1)\oplus \OP_{\p^3}(-2)$},
 then $S^2\E(6) = S^2 \Omega_{\p^3}^1(4)\oplus\Omega_{\p^3}^1(3)\oplus \OP_{\p^3}(2)$. Since each summund is generated
 by its global sections, one has the same for the bundle $S^2\E(6)$.
 
 Therefore, by~Theorem~\ref{Catanese_Casnati} singularities of a general surface $B$ 
 in each of these families consist just of the minimal even set of nodes~\mbox{$w(\E)$}.
 The defect of this even set of nodes vanishes by Lemma~\ref{lemma_computation_of_d}, so
 by formula~\eqref{decomp-codes} we get that $T_2(\resx)\ne 0$.
\end{proof}

\section{Discussion}\label{sect_conclusion}
\subsection*{Examples with positive defect}
By Theorem~\ref{Result} we showed that all sextic double solids admitting
a non-trivial Artin--Mumford obstruction to rationality are necessarily branched in symmetric surfaces of
four exactly defined types. Moreover, by Proposition~\ref{examples} we get that general surfaces of this types
gives actual examples of double solids with a non-trivial Artin--Mumford obstruction. Remarkably, for any general
surface~$B$ of each type the space $\codes$ is one-dimensional by~Theorem~\ref{Catanese_Casnati}. So we do not know 
any examples of a surface $B$ such that $\dim(\codes)>1$ and $T_2(\resx) \ne 0$.

However, in~\mbox{\cite[Proposition 3.3]{CatTon}} is given a bundle $\E$ which defines a surface $B$ with an even set~$w$ of~56 
nodes which consists of all singularities of $B$. Another construction of this variety is given in
\cite{Geemen-Zhao}.
By~\mbox{\cite[Theorem 4.5]{Jaffe-Ruberman}} in the notation of
Section~\mbox{\ref{sect_minimal_1/2_even}} we have $\dim(\codes) = 1 + 2h^1(\p^3,\F)$. However, by the 
description~\mbox{\cite[Proposition 3.3]{CatTon}}, we can conclude that 
\begin{equation*}
 \dim(\codes) = 7.
\end{equation*}
Also the defect $d(\Sing(B))>0$ in this case, because all $56$ points of $\Sing(B)$ are contained in 
four quintic surfaces which are zero loci of partial derivative of the equation of $B$. So if
$d(\Sing(B))<7$ we would get the new family of examples of non-rational sextic double solid and this 
non-rationality would not be a corollary of~\mbox{\cite[Corollary A]{Cheltsov_Park}}.

The group $\codes$ is generated by some set of minimal even sets of nodes, moreover, if $w'\in \codes$, 
then we have~\mbox{$w\setminus w'\in \codes$}. So $\codes$ is generated by even sets of $15,\ 24,\ 32$ and $39$ nodes.
Moreover, since the sum of any two even sets of nodes is also even, we can conclude, that $\codes$ 
is generated just by even sets of $24$ and~$32$ nodes.

Then we have at least two possible ways to estimate $d(\Sing(B))$. The first one is to make a straightforward 
computation similar to the one in Section~\mbox{\ref{sect_computations_of_defect}} using the vector bundle $\E$ constructed
\mbox{in \cite[Proposition 3.3]{CatTon}}. The second way to estimate the defect is to use the structure of the 
group~$\codes$ and to connect somehow the defect $d(\Sing(B))$  with defects of all $w\ne w'\in \codes$ which 
defects are less than or equal to 1. However, we cannot find any reasonable bound even for the defect of the disjoint 
union of two sets of points with known defect. The only very rough idea is that it is less than the cardinality 
of the smaller set, but this is insufficient for our goals. 

\subsection*{Very general sextic double solid}

By the degeneration method introduced in \cite{Voisin}, the existence of sextic double solids with Artin--Mumford 
obstructions to stable rationality implies that a very general nodal sextic double solid from a family containing
such an example is not stably rational. Therefore, it would be interesting to find out have the examples provided 
by Theorem \ref{Result} fit into the parameter space of nodal sextic double solids, similarly to what was done
in \cite[Section 2]{Voisin}. Note that the existence of any of these examples implies that a very general smooth sextic
double solid is not stably rational, but this also follows from the more general result of \cite{HT}.

\subsection*{Other double covers}

It would be interesting to obtain a classification similar to \cite{Endrass99} and Theorem \ref{Result} for other Fano
threefolds that have a double cover structure. It seems that except quartic and sextic double solids three-dimensional 
Artin--Mumford-type examples are known only for a particular family of double covers of quartics over an intersection
with a quartics which have 20 nodes, see \cite[Theorem 1.2]{PS}.
In particular, it would be interesting to know if such examples exist for 
Fano threefolds of type $X_{10}$ which are double covers of the del Pezzo threefold $V_5$, for del Pezzo threefolds 
of type $V_1$, for complete intersection of quadrics and cubics~$X_6$ that are double covers of cubic hypersurfaces and for complete intersection of three 
quadrics~$X_8$  that are double covers of intersection of two quartics.

\subsection*{Casnati--Catanese bundles}

Theorem \ref{if_codes_ne_0} gives necessary conditions for Casnati--Cata\-ne\-se bundles of minimal $1/2$-even sets
of nodes. It would be interesting to find a complete classification of such Casnati--Catanese bundles, and furthermore, of
Casnati--Catanese bundles of arbitrary 
$1/2$-even sets of nodes
similar to that of Theorem~\ref{0-even_sets}.

\bibliographystyle{alpha}
\bibliography{even_sets}

\begin{thebibliography}{Mum66}

\bibitem[AM72]{AM}
M.~Artin and D.~Mumford.
\newblock Some elementary examples of unirational varieties which are not
  rational.
\newblock {\em Proc. London Math. Soc. (3)}, 25:75--95, 1972.

\bibitem[Bar80]{Barth}
W.~Barth.
\newblock Counting singularities of quadratic forms on vector bundles.
\newblock In {\em Vector bundles and differential equations ({P}roc. {C}onf.,
  {N}ice, 1979)}, volume~7 of {\em Progr. Math.}, pages 1--19. Birkh\"{a}user,
  Boston, Mass., 1980.

\bibitem[Bea80]{Beauville}
A.~Beauville.
\newblock Sur le nombre maximum de points doubles d'une surface dans {${\bf
  P}\sp{3}$} {$(\mu (5)=31)$}.
\newblock In {\em Journ\'{e}es de {G}\'{e}ometrie {A}lg\'{e}brique d'{A}ngers,
  {J}uillet 1979/{A}lgebraic {G}eometry, {A}ngers, 1979}, pages 207--215.
  Sijthoff \& Noordhoff, Alphen aan den Rijn---Germantown, Md., 1980.

\bibitem[Bea16]{ex-Beau}
A.~Beauville.
\newblock A very general sextic double solid is not stably rational.
\newblock {\em Bull. Lond. Math. Soc.}, 48, no. 2:321--324, 2016.

\bibitem[Cat81]{Catanese}
F.~Catanese.
\newblock Babbage's conjecture, contact of surfaces, symmetric determinantal
  varieties and applications.
\newblock {\em Invent. Math.}, 63, no. 3:433--465, 1981.

\bibitem[CC97]{CasCat_with_error}
G.~Casnati and F.~Catanese.
\newblock Even sets of nodes are bundle symmetric.
\newblock {\em J. Differential Geom.}, 47, no. 2:237--256, 1997.

\bibitem[CC98]{CasCat}
G.~Casnati and F.~Catanese.
\newblock Errors in the paper: ``{E}ven sets of nodes are bundle symmetric''
  [{J}. {D}ifferential {G}eom. {\bf 47} (1997), no. 2, 237--256; {MR}1601608
  (99h:14044)].
\newblock {\em J. Differential Geom.}, 50, no. 2:415, 1998.

\bibitem[Cle83]{Clemens}
C.~H. Clemens.
\newblock Double solids.
\newblock {\em Adv. in Math.}, 47, no. 2:107--230, 1983.

\bibitem[CP10]{Cheltsov_Park}
I.~Cheltsov and J.~Park.
\newblock Sextic double solids.
\newblock In {\em Cohomological and geometric approaches to rationality
  problems}, volume 282 of {\em Progr. Math.}, pages 75--132. Birkh\"{a}user
  Boston, Inc., Boston, MA, 2010.

\bibitem[CT07]{CatTon}
F.~Catanese and F.~Tonoli.
\newblock Even sets of nodes on sextic surfaces.
\newblock {\em J. Eur. Math. Soc. (JEMS)}, 9, no. 4:705--737, 2007.

\bibitem[End98]{Endrass98}
St. Endra\ss.
\newblock Minimal even sets of nodes.
\newblock {\em J. Reine Angew. Math.}, 503:87--108, 1998.

\bibitem[End99]{Endrass99}
St. Endra\ss.
\newblock On the divisor class group of double solids.
\newblock {\em Manuscripta Math.}, 99, no. 3:341--358, 1999.

\bibitem[HT16]{HT}
B.~Hassett and Yu. Tschinkel.
\newblock On stable rationality of fano threefolds and del pezzo fibrations.
\newblock {\em Journal für die reine und angewandte Mathematik}, pages 1--13,
  2016.

\bibitem[IKP14]{ex-IKP}
A.~Iliev, L.~Katzarkov, and V.~Przyjalkowski.
\newblock Double solids, categories and non-rationality.
\newblock {\em Proc. Edinb. Math. Soc. (2)}, 57, no 1:145--173, 2014.

\bibitem[J{\'{o}}z78]{Jozefiak}
T.~J{\'{o}}zefiak.
\newblock Ideals generated by minors of a symmetric matrix.
\newblock {\em Comment. Math. Helv.}, 53, no. 4:595--607, 1978.

\bibitem[JR97]{Jaffe-Ruberman}
D.~B. Jaffe and D.~Ruberman.
\newblock A sextic surface cannot have {$66$} nodes.
\newblock {\em J. Algebraic Geom.}, 6, no. 1:151--168, 1997.

\bibitem[Mum66]{Mumford}
D.~Mumford.
\newblock {\em Lectures on curves on an algebraic surface}.
\newblock With a section by G. M. Bergman. Annals of Mathematics Studies, No.
  59. Princeton University Press, Princeton, N.J., 1966.

\bibitem[PS16]{PS}
V.~Przyjalkowski and K.~Shramov.
\newblock Double quadrics with large automorphism groups.
\newblock {\em Tr. Mat. Inst. Steklova}, 294:167--190, 2016.
\newblock English version published in Proc. Steklov Inst. Math. {294} (2016),
  no. 1, 154--175.

\bibitem[Tyu70]{Tyurina}
G.~N. Tyurina.
\newblock Resolution of singularities ofplane deformations of double rational
  points.
\newblock {\em Funktsional. Anal. i Prilozhen.}, 4:77--83, 1970.

\bibitem[vGZ18]{Geemen-Zhao}
B.~van Geemen and Y.~Zhao.
\newblock Genus three curves and 56 nodal sextic surfaces.
\newblock {\em J. Algebraic Geom.}, 27(4):583--592, 2018.

\bibitem[Voi15]{Voisin}
C.~Voisin.
\newblock Unirational threefolds with no universal codimension {$2$} cycle.
\newblock {\em Invent. Math.}, 201, no. 1:207--237, 2015.

\bibitem[Wal96]{Walter}
Ch.~H. Walter.
\newblock Pfaffian subschemes.
\newblock {\em J. Algebraic Geom.}, 5, no. 4:671--704, 1996.

\end{thebibliography}

\end{document}